\documentclass[12pt,letterpaper]{article}

\usepackage{amssymb,amsfonts,amscd,amsthm}
\usepackage[all,arc]{xy}
\usepackage{enumerate, bbm}
\usepackage{mathrsfs}
\usepackage{mathtools}
\usepackage{hyperref}
\usepackage[usenames, dvipsnames]{color}
\usepackage{graphicx}
\usepackage{enumitem} 
\usepackage{caption}
\usepackage{subcaption}
\graphicspath{ {TexImages/} }

\newtheorem{thm}{Theorem}[section]
\newtheorem{cor}[thm]{Corollary}
\newtheorem{lem}[thm]{Lemma}
\newtheorem{prop}[thm]{Proposition}
\newtheorem{con}[thm]{Conjecture}
\newtheorem{claim}[thm]{Claim}
\newtheorem{quest}[thm]{Question}
\newtheorem{prob}[thm]{Problem}
\newtheorem{conj}[thm]{Conjecture}

\bibliographystyle{plain}
\hypersetup{
colorlinks,
citecolor=blue,
filecolor=blue,
linkcolor=blue,
urlcolor=blue,
linktocpage
}

\setenumerate[1]{label=\thesection.\arabic*.} 
\setenumerate[2]{label*=\arabic*.} 
\setlist[enumerate]{itemsep=2ex, topsep=2ex} 
\setlist[itemize]{itemsep=2ex, topsep=2ex}

\newcommand{\Z}{\mathbb{Z}}
\newcommand{\N}{\mathbb{N}}
\newcommand{\ep}{\epsilon}
\newcommand{\del}{\delta}
\newcommand{\Del}{\Delta}
\renewcommand{\l}{\left}
\renewcommand{\r}{\right}
\newcommand{\half}{\frac{1}{2}}
\newcommand{\sub}{\subseteq}
\newcommand{\ignore}[1]{}

\newcommand{\tr}[1]{\textrm{#1}}
\newcommand{\rec}[1]{\frac{1}{#1}}
\newcommand{\f}[2]{\frac{#1}{#2}}
\newcommand{\floor}[1]{\lfloor #1\rfloor}
\newcommand{\ceil}[1]{\lceil #1\rceil}


\newcommand{\rs}{\rec{\sqrt{5}}}

\newcommand{\quart}{\frac{1}{4}}


\title{Slow Fibonacci Walks}
\author{
Fan  Chung\footnote{Dept.\ of Mathematics, UCSD 
	{\tt fan@ucsd.edu} }
\and
Ron Graham\footnote{Dept. of Computer Science and Engineering, UCSD,  {\tt graham@ucsd.edu}. }
\and
Sam Spiro\footnote{Dept.\ of Mathematics, UCSD 
	{\tt sspiro@ucsd.edu} }
}
\date{\today}
\begin{document}
\maketitle

\begin{abstract}
For a positive integer $n$, we study the number of steps to reach $n$ by  a {\it Fibonacci walk} for some  starting pair $a_1$ and $a_2$  satisfying the recurrence of $a_{k+2}=a_{k+1}+a_k$. The problem of slow Fibonacci walks, first suggested by Richard Stanley, is to determine the maximum number $s(n)$ of steps for such a Fibonacci walk ending at $n$. Stanley conjectured that for most $n$, there is a slow Fibonacci walk reaching $n = a_s$   with the property that $a_{s+1}$ is the integer closest to $\phi n$ where $\phi=(1+\sqrt{5})/2$. We prove that this is true for only a positive fraction of $n$. We give explicit formulas for the choice of the starting pairs and the determination of $s(n)$ by giving a characterization theorem.  We also derive a number of density results concerning the distribution of down and up cases (that is, those $n$ with $a_{s+1}=\lfloor \phi n\rfloor$ or $\lceil \phi n \rceil$, respectively), as well as for more general ``paradoxical'' cases.

\end{abstract}
\section{Introduction}
Given two positive integers $a_1$,$a_2$, we define the {\it $(a_1,a_2)-$Fibonacci walk} to be the sequence $w_k = w_k(a_1,a_2)$
with $w_1=a_1$, $w_2=a_2$ and $w_{k+2} = w_{k+1} + w_k$ for $k \geq 1$.  In this paper, we are interested in {\it slow} Fibonacci walks. To this end, define
$s(n;a_1,a_2)$ to be the integer $s$ such that $w_s(a_1,a_2) = n$, with this value being $-\infty$ if no such $s$ exists. 
Let $s(n) = \displaystyle\max_{a_1,a_2 \geq 1} s(n;a_1,a_2)$. We will say that the pair $(a_1,a_2)$ is $n$-good if $a_1, a_2 \geq 1$ and if $s(n) = s(n;a_1,a_2)$.
If $(a_1,a_2)$ is an $n$-good pair, then we will say that its associated sequence $w_k(a_1,a_2)$ is an $n$-slow Fibonacci walk.  

For example, it is easily seen that $s(6)=4$ and that the only 6-slow Fibonacci walks are $w_k(2,2)$ and $w_k(4,1)$.  As another example, $s(1)=2$ and $w_k(a,1)$ is a 1-slow Fibonacci walk for any $a\ge 1$. We will see that this sort of behavior is unique to the case $n=1$.

Some years ago, Richard Stanley \cite{S} suggested studying the properties of slow Fibonacci walks. Part of his motivation was to create magic tricks based on
properties of Fibonacci walks (we will mention several such tricks in Section 6.1). In particular, he conjectured that for most $n$ there exists an $n$-slow Fibonacci walk $w_k$ such that $w_{s(n)+1}=N(\phi n)$, where $\phi = \frac{1+\sqrt 5}{2}$ and $N(x)$ denotes the integer closest to $x$.  For example, \[w_5(2,2)=10=N(9.70\ldots)=N(\phi\cdot 6),\] so 6 has this property.  Conversely, one can verify that $w_k(2,1)$ is the only 4-slow Fibonacci walk, that $w_4(2,1)=4$, and that
\[
	w_5(2,1)=7\ne N(6.47\ldots)=N(\phi \cdot 4),
\]
so 4 fails to have this property.  In Corollary~\ref{C-5} we will see precisely how many such $n$ (fail to) have this property.

To state our results, we first define the standard Fibonacci sequence $f_k$ recursively by
$f_1=f_2=1$ and $f_{k+2} = f_{k+1} + f_k$ for $k\geq 1$. It is well known \cite{GKP} that $f_k$ has the explicit representation
\begin{align*}
f_k = \frac{1}{\sqrt 5} \l(\phi^k - (-\phi)^{-k}\r).
\end{align*}
As usual, let $\floor{x}$ denote the floor function of $x$, and let $\ceil{x}$ denote the ceiling function of $x$. Our main result is the following characterization theorem.

\begin{thm}\label{T-Char}
	For $n\ge 2$, there exists unique integers $a=a(n),\ b=b(n)$ and $t=t(n)$ such that $n=af_t+bf_{t-1}$ with $t\ge2$ and $1\le a\le b\le f_{t}$.  Moreover, the following holds.
	\begin{itemize}
		\item $(b,a)$ is $n$-good and $s=s(n)=t+1$.  $w_{s+1}(b,a)=\floor{\phi n}$ if $t$ is even and $w_{s+1}(b,a)=\ceil{\phi n}$ if $t$ is odd.
		\item If $a\le f_{t-1}$, then $(b,a)$ is the unique $n$-good pair.  Otherwise, the only other $n$-good pair is $(b',a')=(b+f_{t},a-f_{t-1})$ and we have $w_{s+1}(b',a')=\floor{\phi n}-1$ if $t$ is even and $w_{s+1}(b',a')=\ceil{\phi n}+1$ if $t$ is odd.
	\end{itemize}
\end{thm}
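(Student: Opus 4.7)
The plan is to prove the theorem in three stages: derive a closed form for Fibonacci walks, establish existence and uniqueness of the triple $(a,b,t)$, and then compute $w_{s+1}$ along with the second $n$-good pair.

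I would first record the identity $w_k(a_1,a_2) = a_1 f_{k-2} + a_2 f_{k-1}$ for $k\ge 2$ (with $f_0:=0$), which follows by induction from the recurrence $w_{k+2}=w_{k+1}+w_k$. With $s=t+1$, this converts the theorem's equation $n=af_t+bf_{t-1}$ into $n=w_s(b,a)$. Hence $s(n)$ equals the largest $s$ for which $n=a_1 f_{s-2}+a_2 f_{s-1}$ admits a solution with $a_1,a_2\ge 1$, and the theorem reduces to finding a unique such $(a_1,a_2)=(b,a)$ with $1\le a\le b\le f_{s-1}$.

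For existence, I would take any pair $(b_0,a_0)$ realizing $s(n)$ and set $t=s(n)-1$. The maximality of $s(n)$ forces $a_0\le b_0$, since otherwise $w_0(b_0,a_0)=a_0-b_0\ge 1$ would extend the walk backward, contradicting the definition of $s(n)$. Since $\gcd(f_t,f_{t-1})=1$, all integer solutions of $af_t+bf_{t-1}=n$ form the one-parameter family $(a_0+kf_{t-1},\,b_0-kf_t)$ for $k\in\Z$; there is a unique integer $k$ placing $b$ in $[1,f_t]$, and a second backward-extension argument forces the shifted $a$ to satisfy $a\le b$ (otherwise one more shift yields a walk of length $s+1$, a contradiction). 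For uniqueness of $t$, I would show that the sets $S_t:=\{af_t+bf_{t-1}:1\le a\le b\le f_t\}$ are pairwise disjoint via a direct Diophantine computation using $f_{t+1}=f_t+f_{t-1}$.

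Finally, the $w_{s+1}$ formulas come from Binet. Since $w_{s+1}(b,a)=bf_t+af_{t+1}$ and $\phi f_k-f_{k+1}=-\bar\phi^k$ with $\bar\phi=-1/\phi$, a short computation gives
\[
\phi n-w_{s+1}(b,a)=-\bar\phi^{t-1}(b-a/\phi)=(-1)^t\phi^{1-t}(b-a/\phi).
\]
The parenthesized factor is positive (from $a\le b$) and at most $f_t$, so $|\phi n-w_{s+1}(b,a)|\le \phi^{1-t}f_t<1$ for all $t\ge 2$; the sign $(-1)^t$ yields $w_{s+1}(b,a)=\lfloor\phi n\rfloor$ when $t$ is even and $\lceil\phi n\rceil$ when $t$ is odd. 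For the second $n$-good pair, the parameterization above shows the only additional candidate with $a',b'\ge 1$ and $a'\le b'$ is $(b',a')=(b+f_t,\,a-f_{t-1})$, which is admissible exactly when $a>f_{t-1}$; Cassini's identity $f_t^2-f_{t-1}f_{t+1}=(-1)^{t+1}$ then gives $w_{s+1}(b',a')-w_{s+1}(b,a)=(-1)^{t+1}$, matching $\lfloor\phi n\rfloor-1$ for even $t$ and $\lceil\phi n\rceil+1$ for odd $t$. The main obstacle will be the existence step above, namely carefully choosing the shift $k$ so that $(a,b)$ lands in the range $1\le a\le b\le f_t$ and then proving disjointness of the $S_t$'s; once this is done, the rest is algebraic manipulation with Binet's formula and Cassini's identity.
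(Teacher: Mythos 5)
Your proposal is correct and follows essentially the same route as the paper: the closed form $w_k(b,a)=af_{k-1}+bf_{k-2}$, the $\gcd(f_t,f_{t-1})=1$ parameterization of all solutions, the backward-extension argument forcing $a\le b$ (and ruling out a longer walk), Binet's formula to show $(-1)^t\phi^{1-t}(b-a/\phi)$ lies strictly between $0$ and $\pm1$, and Cassini's identity for the offset of the second good pair. The only organizational difference is that you normalize $b$ into $[1,f_t]$ by choosing the unique shift $k$ directly (one should note $k\ge 0$ so that $a$ stays positive), whereas the paper first proves $b\le 2f_{s-1}$ via a Frobenius-number lemma so that a single shift suffices; both routes work.
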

We emphasize that the $n$-good pair is $(b,a)$ and not $(a,b)$ as might be expected.    With this characterization, we will be able to prove a number of results concerning slow Fibonacci walks.  For example, we can obtain a density result for the number of $n$ with two $n$-good pairs.

\begin{thm}\label{T-Den2}
	Let $T(n)=n^{-1}|\{m\le n:m\tr{ has two }m\tr{-good pairs}\}|$.  Given $n$, let $c,p$ be such that $n=\rec{\sqrt{5}}c\phi^p$ with $\rec{\sqrt{5}}\le c<\rec{\sqrt{5}}\phi$.  Then
	\[
	T(n)=\begin{cases*}
	\rec{2\sqrt{5}\phi^4c}+O(n^{-1/2}) & $p\equiv 1\mod 2$,\\ 
	\f{\sqrt{5}}{2}c+\f{1+\phi^{-5}}{2\sqrt{5}c}-1+O(n^{-1/2})& $p\equiv 0\mod 2,\ c\le\f{1+\phi^{-3}}{\sqrt{5}}$,\\ 
	1-\f{\sqrt{5}}{2}\phi^{-1}c-\f{1+\phi^{-2}}{2\sqrt{5}c} +O(n^{-1/2})& $p\equiv 0\mod 2,\ c\ge \f{1+\phi^{-3}}{\sqrt{5}}$.
	\end{cases*}
	\]
\end{thm}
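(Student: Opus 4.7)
By Theorem~\ref{T-Char}, an integer $m\ge 2$ has two $m$-good pairs exactly when the unique representation $m=af_t+bf_{t-1}$ with $1\le a\le b\le f_t$ satisfies $a>f_{t-1}$. Thus
\[
n\cdot T(n) \;=\; \sum_{t\ge 2}\#\{(a,b):\ f_{t-1}<a\le b\le f_t,\ af_t+bf_{t-1}\le n\}.
\]
I would substitute $u=a-f_{t-1}$, $v=b-f_{t-1}$ to rewrite the inner count as the number of lattice points in the triangle $\{1\le u\le v\le f_{t-2}\}$ satisfying $uf_t+vf_{t-1}\le M_t$, where $M_t:=n-f_{t-1}f_{t+1}$. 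The maximum of $uf_t+vf_{t-1}$ on the triangle is $f_{t-2}f_{t+1}$, so the whole triangle is counted iff $f_tf_{t+1}\le n$, and no lattice point contributes once $M_t<f_{t+1}$. Writing $n=\rs c\phi^p$ with $c\sqrt5\in[1,\phi)$, at most one value of $t$ yields a strict partial contribution; call the largest contributing $t$ by $T$.

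When $p=2T+1$ is odd, the asymptotics $f_Tf_{T+1}/n\to 1/(c\sqrt5)\le 1$ and $f_{T+1}/M_{T+1}\to \phi/(c\sqrt5)>1$ show that every $t\le T$ gives a full triangle while no larger $t$ contributes. Using the identity $\sum_{k=1}^N f_k^2=f_Nf_{N+1}$,
\[
n\cdot T(n)=\sum_{t=2}^T \binom{f_{t-2}+1}{2}+O(\sqrt n)=\tfrac12 f_{T-2}f_{T-1}+O(\sqrt n).
\]
Replacing $f_k$ by $\phi^k/\sqrt5$ and dividing by $n\sim c\phi^{2T+1}/\sqrt5$ yields the first case $T(n)=\rec{2\sqrt5\phi^4 c}+O(n^{-1/2})$.

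When $p=2T$ is even, the same comparison gives full triangles for $t\le T-1$, contributing $\tfrac12 f_{T-3}f_{T-2}+O(\sqrt n)$, plus a partial contribution from $t=T$. Rescaling by $f_{T-2}$ and using $f_T/f_{T-1}\to\phi$, the partial count becomes $f_{T-2}^2\cdot A(\beta)+O(\sqrt n)$, where $A(\beta)$ is the area of the region $\{0\le x\le y\le 1,\ x\phi+y\le\beta\}$ and $\beta=M_T/(f_{T-1}f_{T-2})\to (c\sqrt5-1)\phi^3\in[0,\phi^2)$. A direct integration, splitting by whether the cutting line meets the top edge $y=1$, yields
\[
A(\beta)=\begin{cases*}\beta^2/(2\phi^2) & if $0\le\beta\le 1$,\\ \tfrac12-(\phi^2-\beta)^2/(2\phi^3) & if $1\le\beta\le\phi^2$.\end{cases*}
\]
The threshold $\beta=1$ corresponds precisely to $c=(1+\phi^{-3})/\sqrt5$, matching the split in the theorem. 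Substituting $\beta=(c\sqrt5-1)\phi^3$ and simplifying using $\phi+1=\phi^2$ and $\phi^{-2}=2-\phi$ collapses the result to the two remaining stated formulas.

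The main obstacle is the clean derivation of the second piece of $A(\beta)$: both the integration and the subsequent algebra must combine several terms into the perfect square $(\phi^2-\beta)^2/(2\phi^3)$ before the substitution in $c$ will produce the expression $1-(\sqrt5/2)\phi^{-1}c-(1+\phi^{-2})/(2\sqrt5 c)$. The uniform $O(n^{-1/2})$ error arises from the lattice-point-versus-area discrepancy, bounded by the perimeter $O(f_{T-2})=O(\sqrt n)$, together with the $O(1)$ errors from approximating $f_t$ by $\phi^t/\sqrt5$.
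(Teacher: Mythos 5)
Your proposal is correct and follows essentially the same route as the paper: reduce via Theorem~\ref{T-Char} to counting lattice points in the triangles $1\le u\le v\le f_{t-2}$ cut by $uf_t+vf_{t-1}\le n-f_{t-1}f_{t+1}$, observe that all $t$ up to roughly $p/2$ give full triangles with at most one partial contribution, and control everything to within $O(\sqrt{n})$. Your evaluation of the partial term as a rescaled area $A(\beta)$ (and the exact summation $\sum f_k^2=f_Nf_{N+1}$ for the full triangles) is a mildly cleaner execution than the paper's discrete sums over $b$, but the decomposition, thresholds, and error analysis are the same, and I have checked that your $\beta=(c\sqrt5-1)\phi^3$, the split at $\beta=1$, and both branches of $A(\beta)$ do simplify to the stated formulas.
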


In Figure \ref{f1}, we show plots comparing the actual count of $T(n)$ (= Data) versus what Theorem \ref{T-Den2} (= Theory) predicts asymptotically.

\begin{figure}[htb] 
	\centering
	\begin{subfigure}[h]{0.45\textwidth}
		\includegraphics[width=\textwidth]{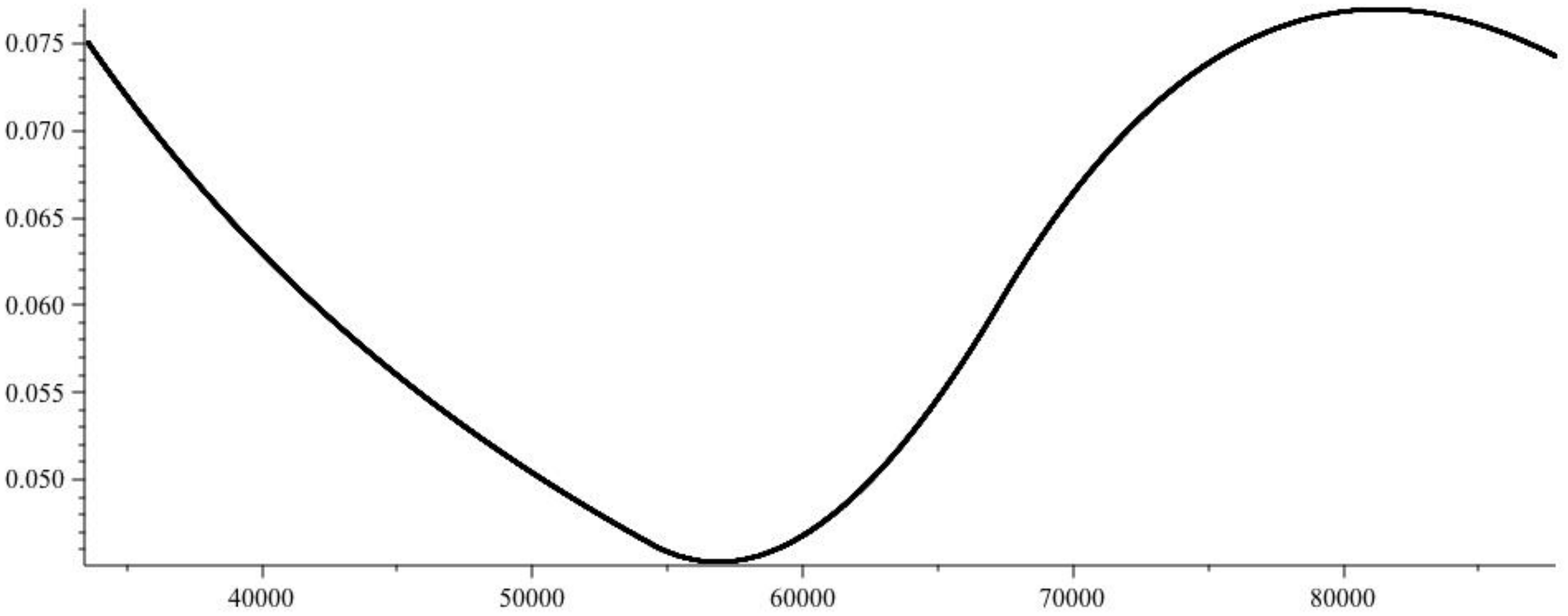}
		\caption{Data plot of $T(n).$}
	\end{subfigure}
	\begin{subfigure}[h]{0.45\textwidth}
		\includegraphics[width=\textwidth]{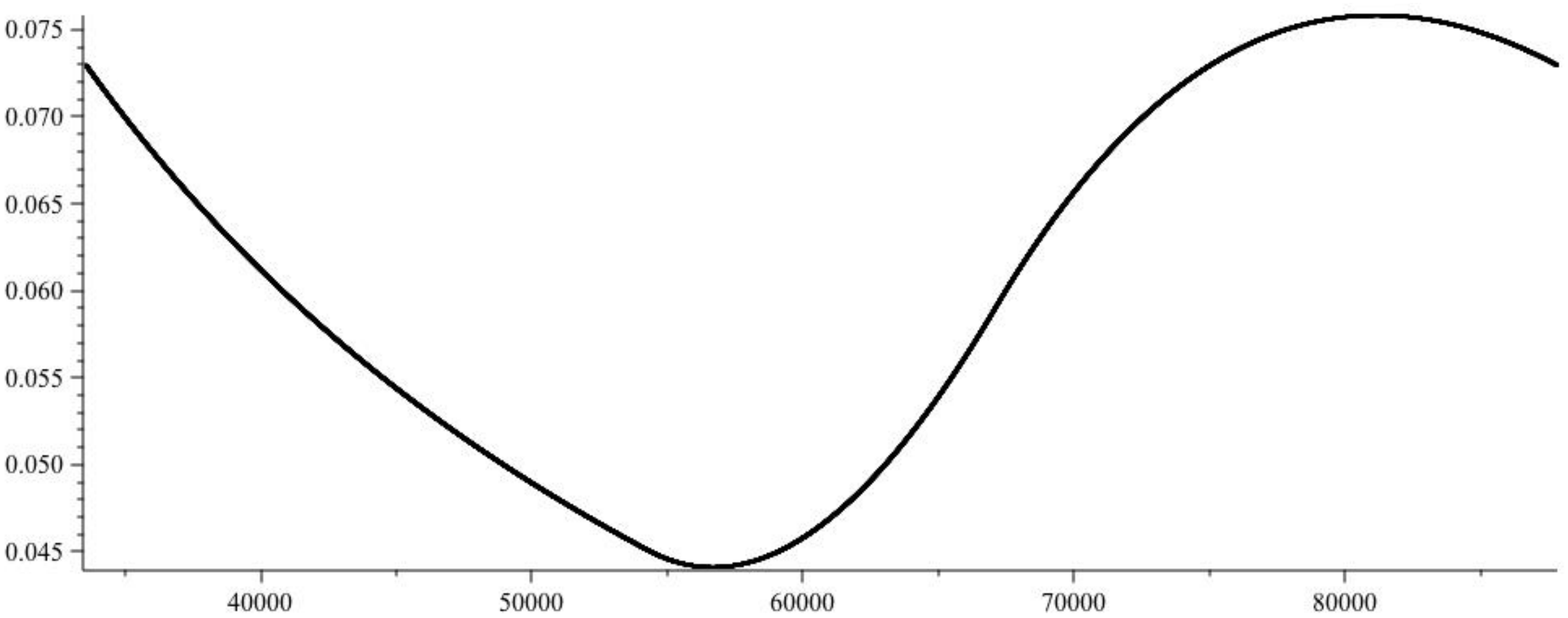}
		\caption{Theory plot of $T(n).$}
	\end{subfigure}
	\caption{Different plots of $T(n).$}
	\label{f1}
\end{figure}

We will say that $n\ge 2$ is a down-integer if $w_{s+1}=\floor{\phi n}$ for some $n$-slow Fibonacci walk, and we will say that it is an up-integer if $w_{s+1}=\ceil{\phi n}$ for some $n$-slow Fibonacci walk. We let $D=\{d_1,d_2,\ldots\}$ denote the set of down-integers written in increasing order, and similarly we define the set of up-integers $U=\{u_1,u_2,\ldots\}$.  Note that Theorem~\ref{T-Char} shows that every $n\ge 2$ belongs to precisely one of these sets.  The first few elements of these sets are listed below.

\begin{figure}[h]
	\centering
		\includegraphics[width=0.75\textwidth]{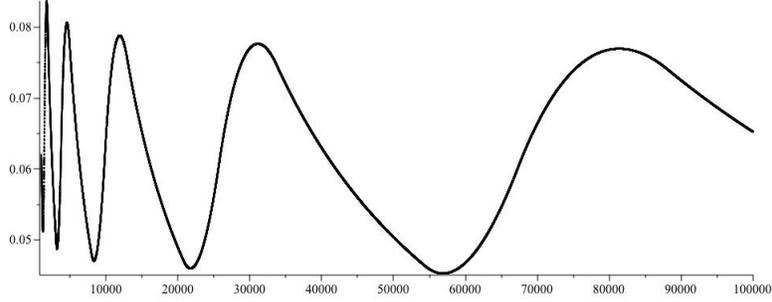}
		\caption{Data plot of $T(n)$ for $1000 \leq n \leq 100000.$}
	\label{f22}
\end{figure}

{\footnotesize
	 \begin{align}
	 D&=\{2,5, 7, 9, 10, 12, 13, 15, 18, 23, 26, 28, 31, 33, 34, 36, 38, 39, 41, 43, 44, 46, 47, 48,\ldots\}, \label{D1}\\
	 U&=\{3,4, 6, 8, 11, 14, 16, 17, 19, 20, 21, 22, 24, 25, 27, 29, 30, 32, 35, 37, 40, 42, 45, 50\ldots\}.\label{U1}
	\end{align} 
	\par}

Intuitively one might expect the densities of these sets to be roughly equal to one another.  This turns out to be correct, though as in Theorem~\ref{T-Den2} the exact densities oscillate with $n$.

\begin{thm}\label{T-DenD}
	Let $D(n)=n^{-1}|D\cap [n]|$.  Given $n$, let $c,p$ be such that $n=\rec{\sqrt{5}}c\phi^p$ with $\rec{\sqrt{5}}\le c<\rec{\sqrt{5}}\phi$.  Then 
	
	\[
	D(n)=\begin{cases*}
	1-\half c-\rec{10c}+O(n^{-1/2}) & $p\equiv 0\mod 4$,\\ 
	\rec{2\phi}c+\f{\phi}{10c}+O(n^{-1/2}) & $p\equiv 1\mod 4$,\\ 
	\half c+\rec{10c}+O(n^{-1/2}) & $p\equiv 2\mod 4$,\\ 
	1-\rec{2\phi}c-\f{\phi}{10c}+O(n^{-1/2}) & $p\equiv 3\mod 4$.
	\end{cases*}
	\]	
\end{thm}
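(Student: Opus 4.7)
The plan is to reduce $|D \cap [n]|$ (which equals $D(n) \cdot n$) to a sum of lattice-point counts, replace each count by a continuous area, and evaluate asymptotically using Binet's formula.

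\textbf{Step 1 (reduction).} I would first observe that $n \in D$ if and only if $t(n)$ is even. Indeed, Theorem~\ref{T-Char} gives $w_{s+1}(b,a) = \floor{\phi n}$ when $t$ is even and $w_{s+1}(b,a) = \ceil{\phi n}$ when $t$ is odd, while a second $n$-good pair (when it exists) shifts $w_{s+1}$ by only $\pm 1$ and hence never crosses between floor and ceiling. Combined with the uniqueness of the representation $n = af_t + bf_{t-1}$, this yields
\[
|D \cap [n]| = \sum_{t \ge 2,\, t \text{ even}} C_t(n), \qquad C_t(n) := \#\{(a,b) \in \Z^2 : 1 \le a \le b \le f_t,\; af_t + bf_{t-1} \le n\}.
\]

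\textbf{Step 2 (area approximation).} I would approximate $C_t(n)$ by the area of the real region $R_t = \{(x,y) \in \mathbb{R}^2 : 0 \le x \le y \le f_t,\, xf_t + yf_{t-1} \le n\}$, with total error $O(\sqrt n)$ after summing over $t$ (the boundary of each $R_t$ has length $O(f_t)$ in the full/middle regimes and $O(n/f_t)$ in the small regime, and both sums telescope to $O(\sqrt n)$). A direct geometric calculation—intersecting the triangle with the half-plane and analyzing which vertex is cut off—yields three regimes:
\[
\mathrm{Area}(R_t) = \begin{cases*}
\f{f_t^2}{2} & $n \ge f_t f_{t+1}$ (``full''), \\
n - \f{f_t f_{t-1}}{2} - \f{n^2}{2 f_t f_{t+1}} & $f_t f_{t-1} \le n \le f_t f_{t+1}$ (``middle''), \\
\f{n^2}{2 f_{t-1} f_{t+1}} & $f_{t+1} \le n \le f_t f_{t-1}$ (``small'').
\end{cases*}
\]

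\textbf{Step 3 (summation).} Writing $n = c\phi^p/\sqrt 5$ and applying Binet's formula, the middle regime contains a unique value $T_m = \ceil{p/2}$, with the full regime corresponding to $t \le T_m - 1$ and the small regime to $t \ge T_m + 1$. The full-regime sum over even $t$ is a geometric series in $\phi^{4i}$ with leading term of order $\phi^{2T_m}/(10(\phi^4-1))$; the small-regime sum is another geometric series; the middle term is present exactly when $T_m$ is even, i.e., when $p \equiv 0$ or $3 \pmod 4$. Adding the three contributions, dividing by $n$, and simplifying via the identity $\phi^4 - 1 = \sqrt 5\,\phi^2$ produces the four cases of the theorem.

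The main obstacle is the bookkeeping in Step 3: each residue of $p \bmod 4$ forces slightly different starting indices for the even-$t$ sums, together with a different presence/absence of the middle term, leading to four superficially distinct algebraic expressions. Once the regime containing each even $t$ is correctly identified, the final simplification reduces to the identity $\phi^4 - 1 = \sqrt 5\,\phi^2$ (equivalently $\phi^{-1} - (\phi^4 - 1)^{-1} = \rec{\sqrt 5}$), which follows directly from $\phi = (1+\sqrt 5)/2$; this single identity is what collapses the messy combinations of Fibonacci products and powers of $\phi$ into the clean closed forms stated.
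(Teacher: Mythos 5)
Your proposal is correct and follows essentially the same route as the paper: both reduce $|D\cap[n]|$ to counting pairs $(a,b)$ with $1\le a\le b\le f_t$ and $af_t+bf_{t-1}\le n$ over even $t$ (using Theorem~\ref{T-Char} to identify $D$ with even $t(n)$), approximate each count by a continuous region with $O(\phi^t)$ error, split into the same full/middle/small regimes according to how $2t$ compares with $p$, and sum geometric series in the four residue classes of $p\bmod 4$. The only cosmetic difference is that the paper first replaces the Fibonacci numbers by powers of $\phi$ and evaluates discrete sums over $b$, whereas you compute exact triangle areas and invoke Binet at the end; your area formulas and the placement of the unique middle term at $T_m=\ceil{p/2}$ check out against the paper's computations.
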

We note that the above statement can be written  more compactly as follows:
\[
D(n)=\begin{cases*}
\f{\sqrt{5}n}{2\phi^{q+1}}+\f{\phi^{q+1}}{10\sqrt{5}n}+O(n^{-1/2}) & $\rec{5}\phi^q\le n<\rec{5}\phi^{q+2},\ q\equiv 1\mod 4$,\\ 
1-\f{\sqrt{5}n}{2\phi^{q+1}}-\f{\phi^{q+1}}{10\sqrt{5}n}+O(n^{-1/2}) & $\rec{5}\phi^q\le n<\rec{5}\phi^{q+2},\ q\equiv 3\mod 4$.
\end{cases*}
\]
In Figure \ref{f2}, we show plots comparing the actual count of $D(n)$ (= Data) versus what Theorem \ref{T-DenD} (= Theory) predicts asymptotically.

\begin{figure}[htb] 
	\centering
	\begin{subfigure}[h]{0.45\textwidth}
		\includegraphics[width=\textwidth]{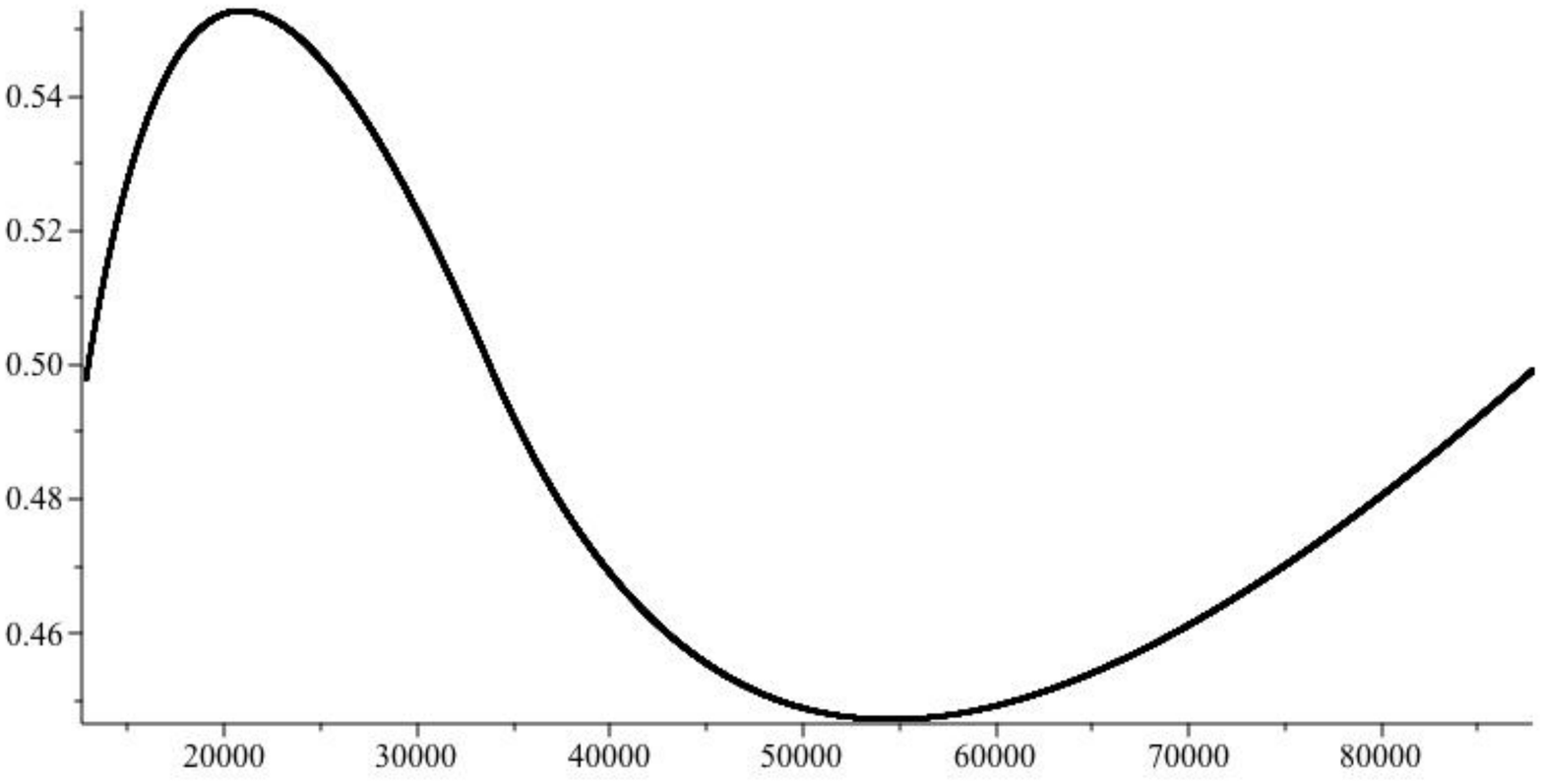}
		\caption{Data plot of $D(n).$}
	\end{subfigure}
	\begin{subfigure}[h]{0.45\textwidth}
		\includegraphics[width=\textwidth]{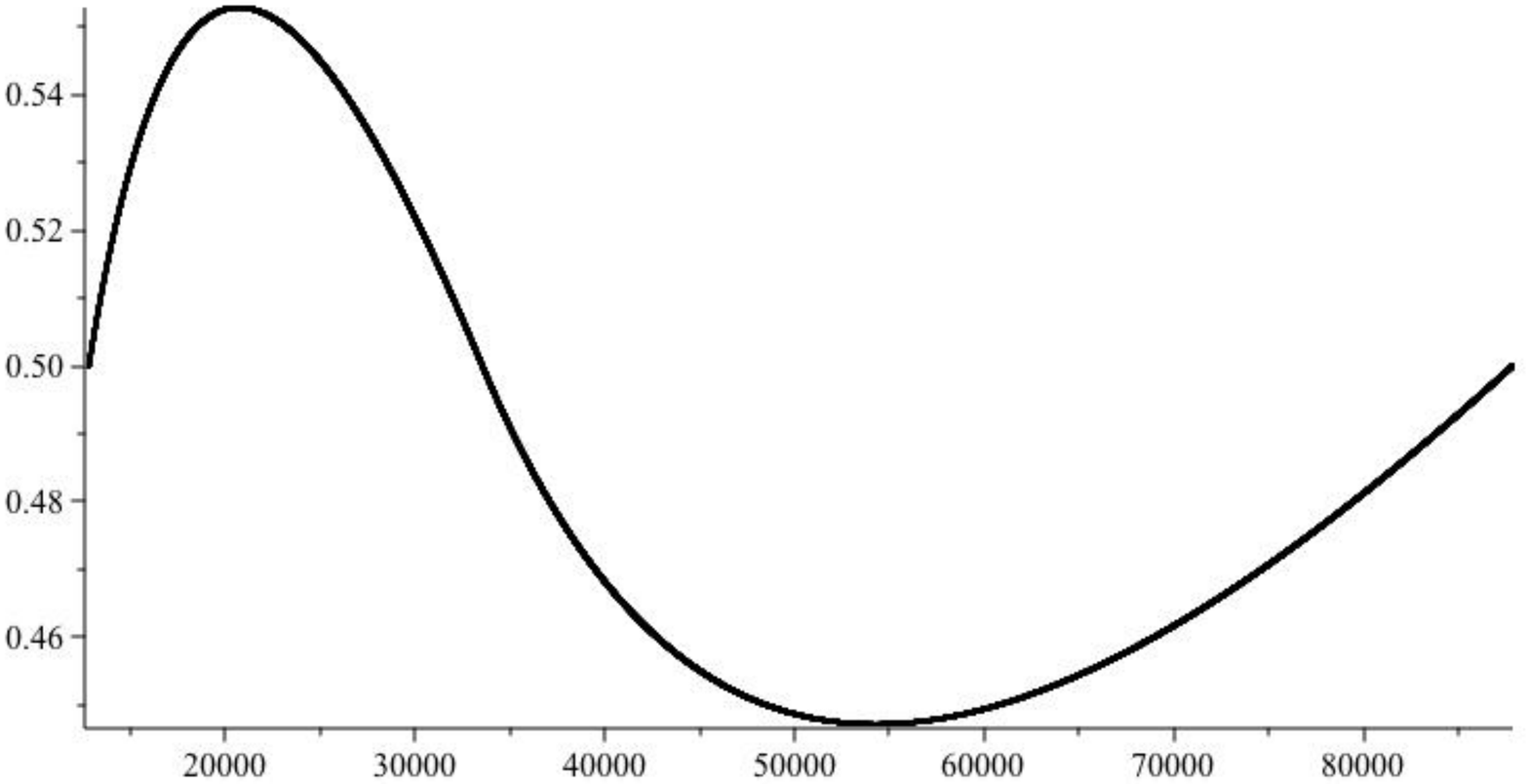}
		\caption{Theory plot of $D(n).$}
	\end{subfigure}
	\caption{Different plots of $D(n).$}
	\label{f2}
\end{figure}

Returning to Stanley's original conjecture, we define $\del_n=\phi n-\floor{\phi n}$ and similarly $\Del_n=\ceil{\phi n}-\phi n$.  Intuitively, the smaller $\del_n$ or $\Del_n$ is, the more likely it should be that $n\in D$ or $n\in U$, respectively.  To make this idea precise, we say that $n$ is $d$-paradoxical if either $\del_n<d$ and $n\in U$ or if $\Del_n<d$ and $n\in D$.

\begin{thm}\label{T-DenP}
	For $d\le \half$, let $P(n,d)=n^{-1}|\{m\le n:m\tr{ is }d\tr{-paradoxical}\}|$.  Given $n$, let $c,p$ be such that $n=\rec{\sqrt{5}}c\phi^p$ with $\rec{\sqrt{5}}\le c<\rec{\sqrt{5}}\phi$.  We have $P(n,d)=0$ if $d\le \rec{\sqrt{5}}\phi^{-1}$, and otherwise
	{\small \[
	P(n,d)=\begin{cases*}
	-\half \phi^{-1}c+d+\l( d^2-d+\rec{2\sqrt{5}}\phi^{-1}\r)c^{-1}+O(n^{-1/2}) & $p$\tr{ odd}, $c\le \phi d$,\\ 
	\f{\sqrt{5}}{2}\phi\l(d-\rec{\sqrt{5}}\phi^{-1}\r)^2c^{-1}+O(n^{-1/2}) & $p$\tr{ odd}, $c\ge \phi d$,\\ 
	-\half c+d+\l( \phi^{-1}d^2-\phi^{-1}d+\rec{2\sqrt{5}}\phi^{-2}\r)c^{-1}+O(n^{-1/2}) & $p$\tr{ even}, $c\le d$,\\
	\f{\sqrt{5}}{2}\l(d-\rec{\sqrt{5}}\phi^{-1}\r)^2c^{-1}+O(n^{-1/2}) & $p$ even, $d\le c\le 1-d$\\
	\f{1}{2}c+d-1+\l(\phi d^2-\phi d+\rec{2\sqrt{5}}\phi^2\r)c^{-1}+O(n^{-1/2}) & $p$\tr{ even}, $c\ge 1-d$.
	\end{cases*}
	\]}
\end{thm}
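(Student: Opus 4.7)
The plan is to convert $d$-paradoxicality into an arithmetic condition on the canonical triple $(a,b,t)$ provided by Theorem~\ref{T-Char} and then count the resulting lattice points, summing over $t$ in close analogy to the proof framework already used for Theorems~\ref{T-Den2} and~\ref{T-DenD}.

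First I would derive the condition. From Binet's formula one obtains $\phi f_t=f_{t+1}+(-1)^{t+1}\phi^{-t}$, which substituted into $\phi n=\phi(af_t+bf_{t-1})$ yields
\[
\phi n=(af_{t+1}+bf_t)+(-1)^t\epsilon,\qquad \epsilon:=\phi^{-t}(b\phi-a).
\]
Over $1\le a\le b\le f_t$ one checks that $0<\epsilon<\phi/\sqrt 5<1$. Hence $\delta_n=\epsilon$ and $\Delta_n=1-\epsilon$ when $t$ is even, while $\delta_n=1-\epsilon$ and $\Delta_n=\epsilon$ when $t$ is odd. Combined with the parity dichotomy $n\in D\Leftrightarrow t$ even coming from Theorem~\ref{T-Char}, in both parities $n$ is $d$-paradoxical iff $\epsilon>1-d$, i.e.,
\[
b\phi-a>(1-d)\phi^t.
\]
The threshold $d>\phi^{-1}/\sqrt 5$ in the statement is then exactly the condition that $(1-d)\phi^t<\max_{(a,b)}(b\phi-a)$, whose limit as $t\to\infty$ is $\phi^{t+1}/\sqrt 5$.

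Next I would write
\[
nP(n,d)=\sum_{t\ge 2}N_t(n,d),\quad N_t(n,d)=\bigl|\{(a,b):1\le a\le b\le f_t,\ af_t+bf_{t-1}\le n,\ b\phi-a>(1-d)\phi^t\}\bigr|,
\]
and approximate each $N_t$ by the area of the analogous planar region: the triangle $\{1\le a\le b\le f_t\}$ intersected with the two half-planes above. Rescaling by $f_t$ with $u=b/f_t$ and $v=a/f_t$, the paradoxical cut becomes $v<u\phi-(1-d)\sqrt 5+O(\phi^{-2t})$ and the size cut becomes $v+u/\phi\le n/f_t+O(\phi^{-2t})$ with $n/f_t\approx c\phi^{p-t}$. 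For $t$ small enough that the size cut is inactive within the triangle, $N_t$ is a quadratic in $f_t$ independent of $n$, and the sum over such $t$ contributes a closed-form geometric tail of size $\Theta(n)$.

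For $t$ close to $p$, the size cut is binding and the region is a polygon whose shape depends on how the two cut lines intersect the triangle. The thresholds $c=d$, $c=\phi d$, and $c=1-d$ in the statement arise exactly as the values of $c$ at which the intersection of the two cut lines crosses a triangle vertex; which particular thresholds are active is controlled by the parity of the topmost $t$, which in turn is dictated by the parity of $p$. Computing the area case by case, combining with the geometric tail from smaller $t$, and dividing by $n$ yields the piecewise formula in the statement. The main obstacle is the bookkeeping in this case analysis---each of the five cases corresponds to a different polygonal shape to integrate and combine with the geometric series---and secondarily the $O(n^{-1/2})$ remainder requires sharpening the naive $O(\text{perimeter})$ lattice-discrepancy bound to a Koksma-type $O(\log f_t)$ per $t$, exploiting that $\phi$ has a bounded continued-fraction expansion.
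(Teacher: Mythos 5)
Your proposal follows essentially the same route as the paper: the paper's Lemma~\ref{L-abDel} is exactly your Binet computation identifying $\phi^{-t}(\phi b-a)$ with $\del_n$ or $\Del_n$ according to the parity of $t$, the paradoxicality condition becomes $\phi b - a > (1-d)\phi^t$ in both parities, and the count is carried out per $t$ by approximating the lattice count with a continuous region, with the thresholds $c=d$, $c=\phi d$, $c=1-d$ emerging from the case analysis at the few values of $t$ where the size constraint is binding. One correction: the size cut becomes binding when $\phi^{2t}\approx\phi^p$, i.e.\ at $t\approx p/2$ (not $t$ close to $p$), since the constraint $b>(1-d)\phi^{t-1}$ with $1-d\ge\half$ forces $n\ge bf_{t-1}=\Omega(\phi^{2t})$ and hence $h(t)=0$ for $2t\ge p+3$; consequently the naive $O(\phi^t)$ perimeter bound per $t$, summed over $t\le p/2+O(1)$, already gives a total error of $O(\phi^{p/2})=O(\sqrt n)$, and no Koksma-type sharpening is needed.
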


In particular, we get the following result when $d=\half$.
\begin{cor}\label{C-5}
	Let $P(n)$ denote the fraction of $m\le n$ such that either $N(\phi m)=\floor{\phi m}$ and $m\in U$, or $N(\phi m)=\ceil{\phi m}$ and $m\in D$.  Given $n$, let $c,p$ be such that $n=\rec{\sqrt{5}}c\phi^p$ with $\rec{\sqrt{5}}\le c<\rec{\sqrt{5}}\phi$.  Then
	\[
	P(n)=\begin{cases*}
	-\half \phi^{-1}c+\half+\l(\rec{2\sqrt{5}}\phi^{-1}-\quart \r)c^{-1}+O(n^{-1/2})  & $p$\tr{ odd},\\ 
	-\half c+\half+\l( \rec{2\sqrt{5}}\phi^{-2}-\quart\phi^{-1}\r)c^{-1}+O(n^{-1/2}) & $p$\tr{ even}, $c\le \half$,\\
	\f{1}{2}c-\half+\l( \rec{2\sqrt{5}}\phi^2-\quart\phi\r)c^{-1}+O(n^{-1/2}) & $p$\tr{ even}, $c\ge \half$.
	\end{cases*}
	\]
\end{cor}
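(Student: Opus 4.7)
The plan is to derive Corollary~\ref{C-5} directly from Theorem~\ref{T-DenP} by specializing to $d=\half$. First I would reconcile the two different-looking definitions of $P(n)$: the corollary counts $m\le n$ for which the integer nearest to $\phi m$ has the ``wrong'' rounding direction relative to whether $m\in D$ or $m\in U$, while Theorem~\ref{T-DenP} counts the $\half$-paradoxical $m$. Since $\phi$ is irrational, $\phi m$ is never a half-integer, so $\del_m<\half$ is equivalent to $N(\phi m)=\floor{\phi m}$ and $\Del_m<\half$ is equivalent to $N(\phi m)=\ceil{\phi m}$. This shows the two definitions of $P(n)$ agree, so substituting $d=\half$ into the formula of Theorem~\ref{T-DenP} is legitimate.

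Next I would simplify the five subcases of Theorem~\ref{T-DenP} when $d=\half$. The normalization $\rec{\sqrt{5}}\le c<\rec{\sqrt{5}}\phi$ gives $c<\phi/\sqrt{5}<\phi/2=\phi d$, so for odd $p$ only the first subcase ($c\le\phi d$) occurs; substituting $d=\half$ into its expression reduces it directly to the first case of the corollary. For even $p$, the middle subcase is $d\le c\le 1-d$, i.e.\ $c=\half$, which holds for at most one value of $n$ and is absorbed into the $O(n^{-1/2})$ error; the remaining subcases $c\le d$ and $c\ge 1-d$ coincide with $c\le\half$ and $c\ge\half$ and yield the second and third formulas of the corollary after elementary simplification of $\phi^{-1}d^2-\phi^{-1}d+\rec{2\sqrt{5}}\phi^{-2}$ and $\phi d^2-\phi d+\rec{2\sqrt{5}}\phi^2$ at $d=\half$.

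The main obstacle is quite minor: it amounts to verifying that the boundary inequalities above correctly route each $(p,c)$ into the appropriate subcase of Theorem~\ref{T-DenP}, together with careful tracking of signs during the three algebraic simplifications. No new asymptotic analysis or combinatorial argument is needed; the corollary is essentially a bookkeeping specialization of the more general density result established in Theorem~\ref{T-DenP}.
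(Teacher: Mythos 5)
Your proposal is correct and matches the paper's (implicit) argument exactly: the corollary is stated as the $d=\half$ specialization of Theorem~\ref{T-DenP}, and your reconciliation of the two definitions via the irrationality of $\phi$, the observation that $c<\rec{\sqrt{5}}\phi<\phi/2$ forces the first odd-$p$ subcase, and the collapse of the middle even-$p$ subcase to the single boundary point $c=\half$ are precisely the bookkeeping steps needed. The algebraic simplifications at $d=\half$ check out, so nothing is missing.
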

In Figure \ref{f3}, we show plots comparing the actual count of $P(n,d)$ (= Data) versus what Theorem \ref{T-Den2} (= Theory) predicts asymptotically for various values of $d$.

\begin{figure}[htb] 
	\centering
	\begin{subfigure}[h]{0.45\textwidth}
		\includegraphics[width=\textwidth]{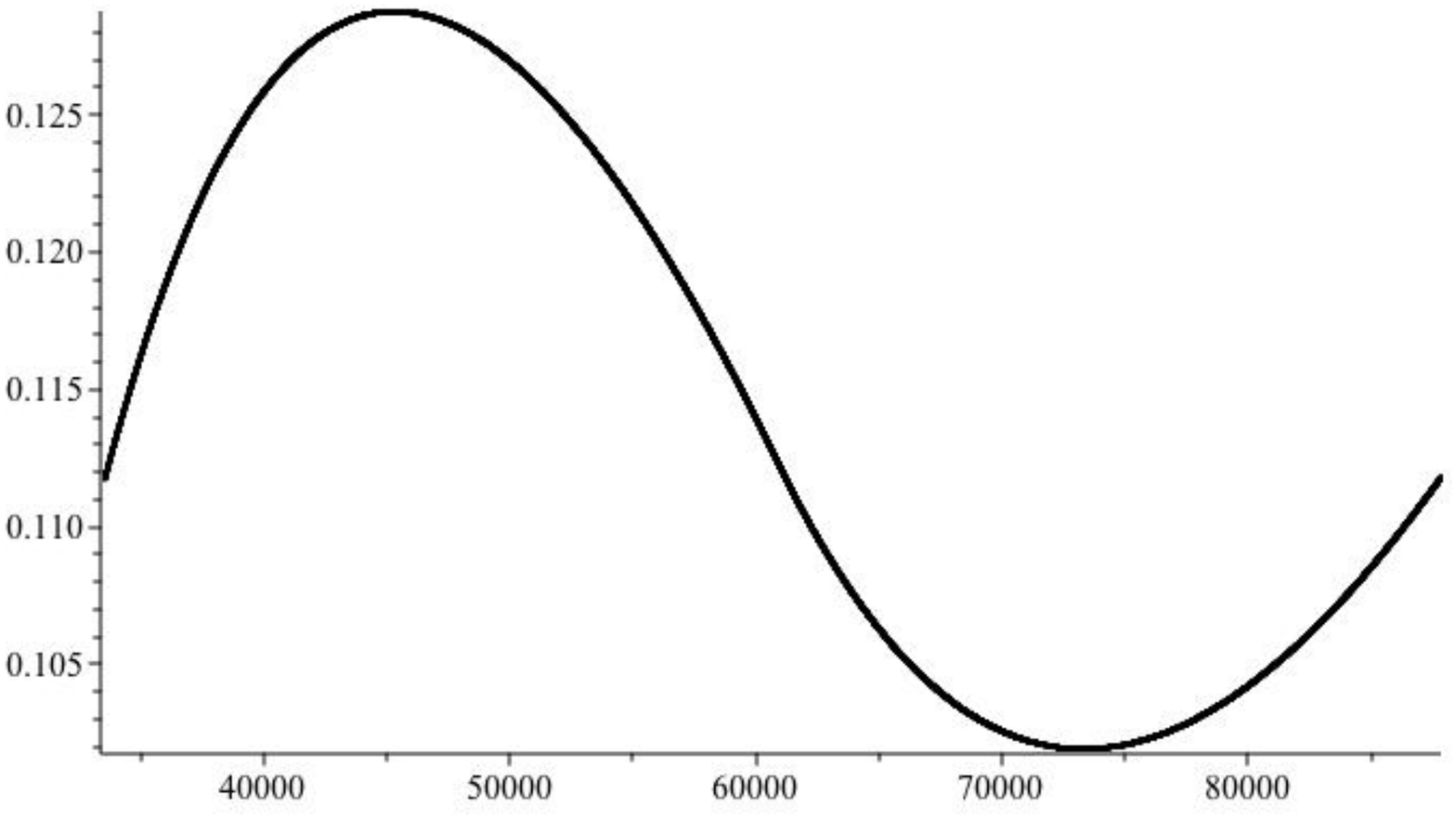}
		\caption{Data plot of $P(n,.5).$}
	\end{subfigure}
	\begin{subfigure}[h]{0.45\textwidth}
		\includegraphics[width=\textwidth]{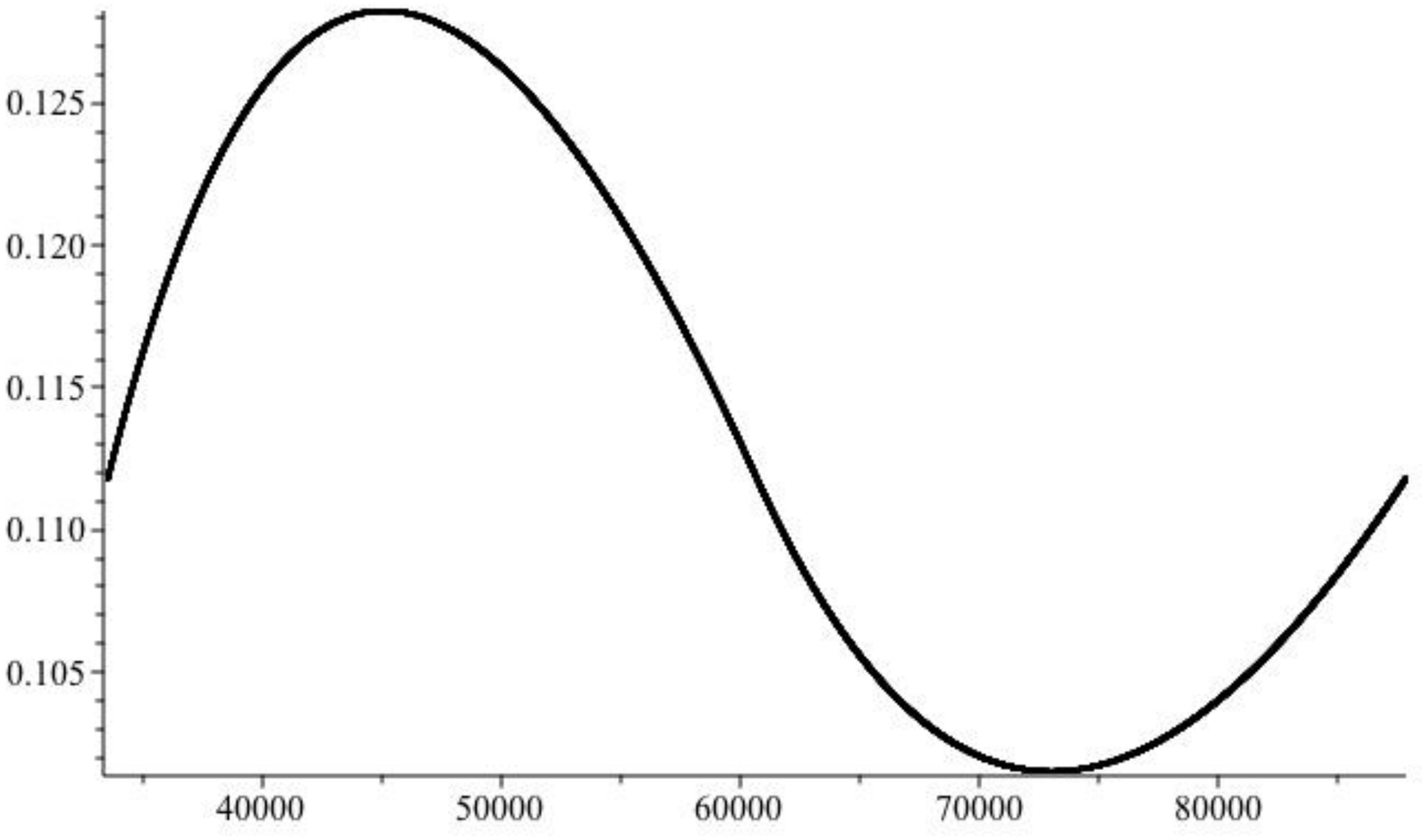}
		\caption{Theory plot of $P(n,.5).$}
	\end{subfigure}
	\begin{subfigure}[h]{0.45\textwidth}
		\includegraphics[width=\textwidth]{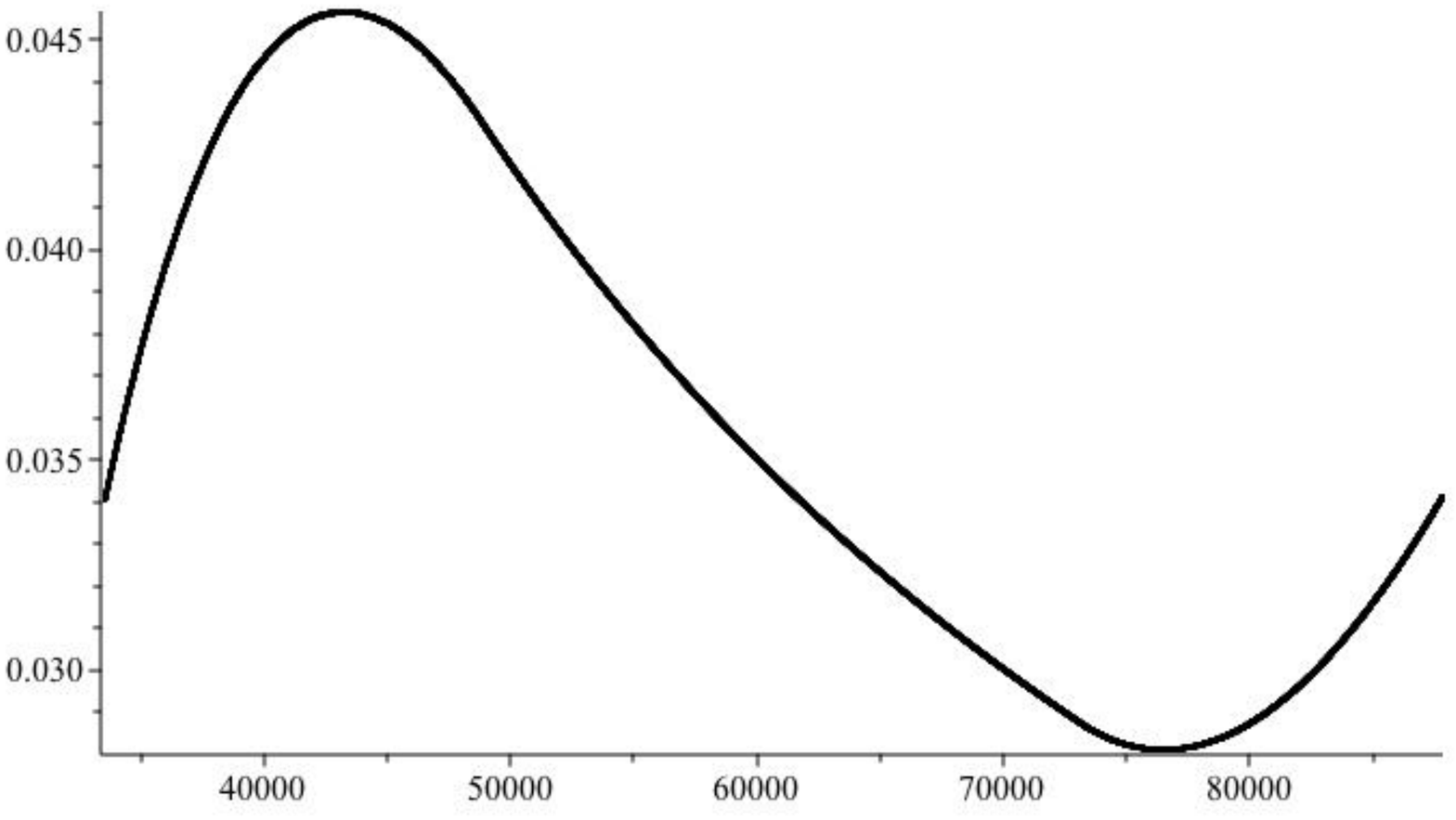}
		\caption{Data plot of $P(n,.4).$}
	\end{subfigure}
	\begin{subfigure}[h]{0.45\textwidth}
		\includegraphics[width=\textwidth]{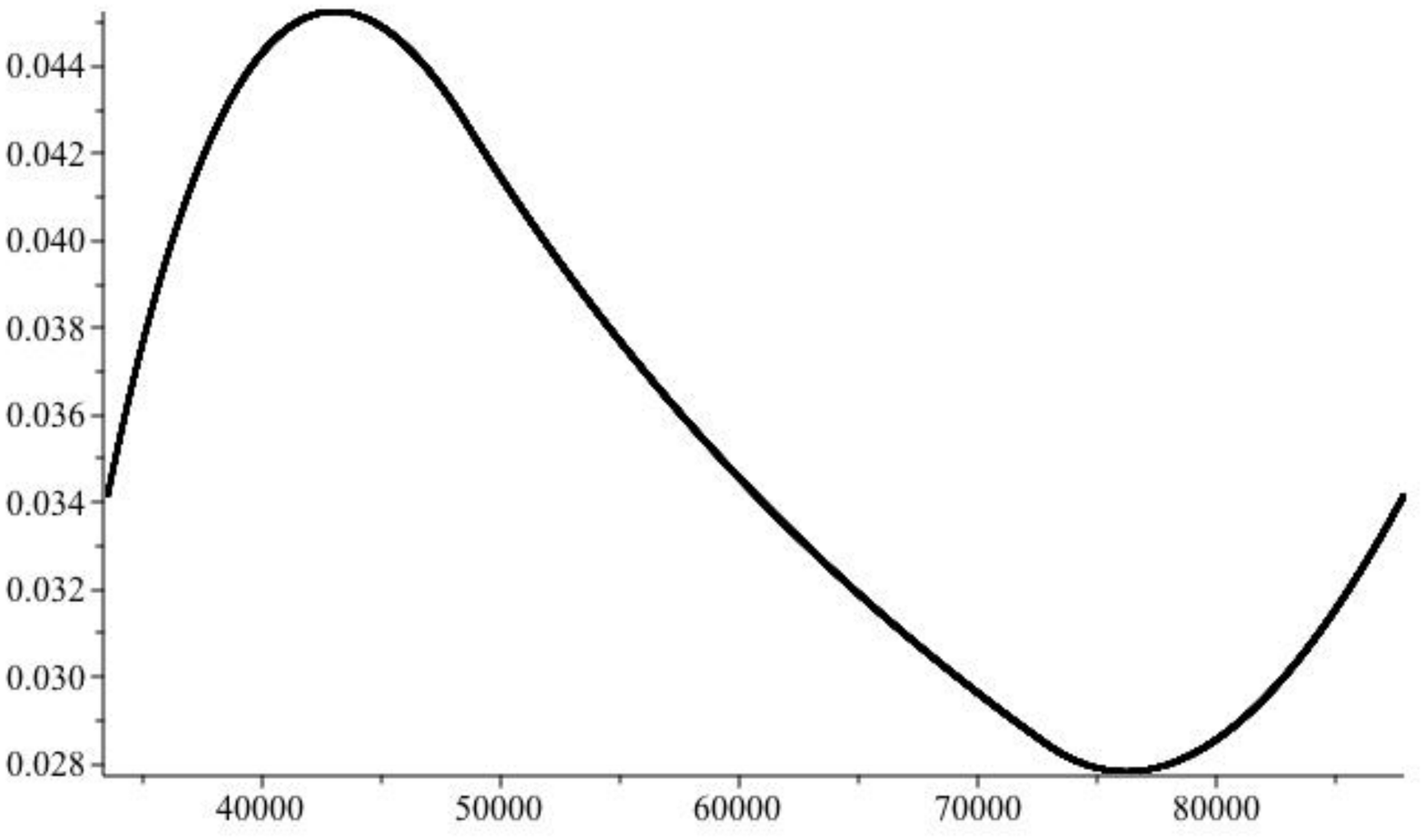}
		\caption{Theory plot of $P(n,.4).$}
	\end{subfigure}
	\begin{subfigure}[h]{0.45\textwidth}
		\includegraphics[width=\textwidth]{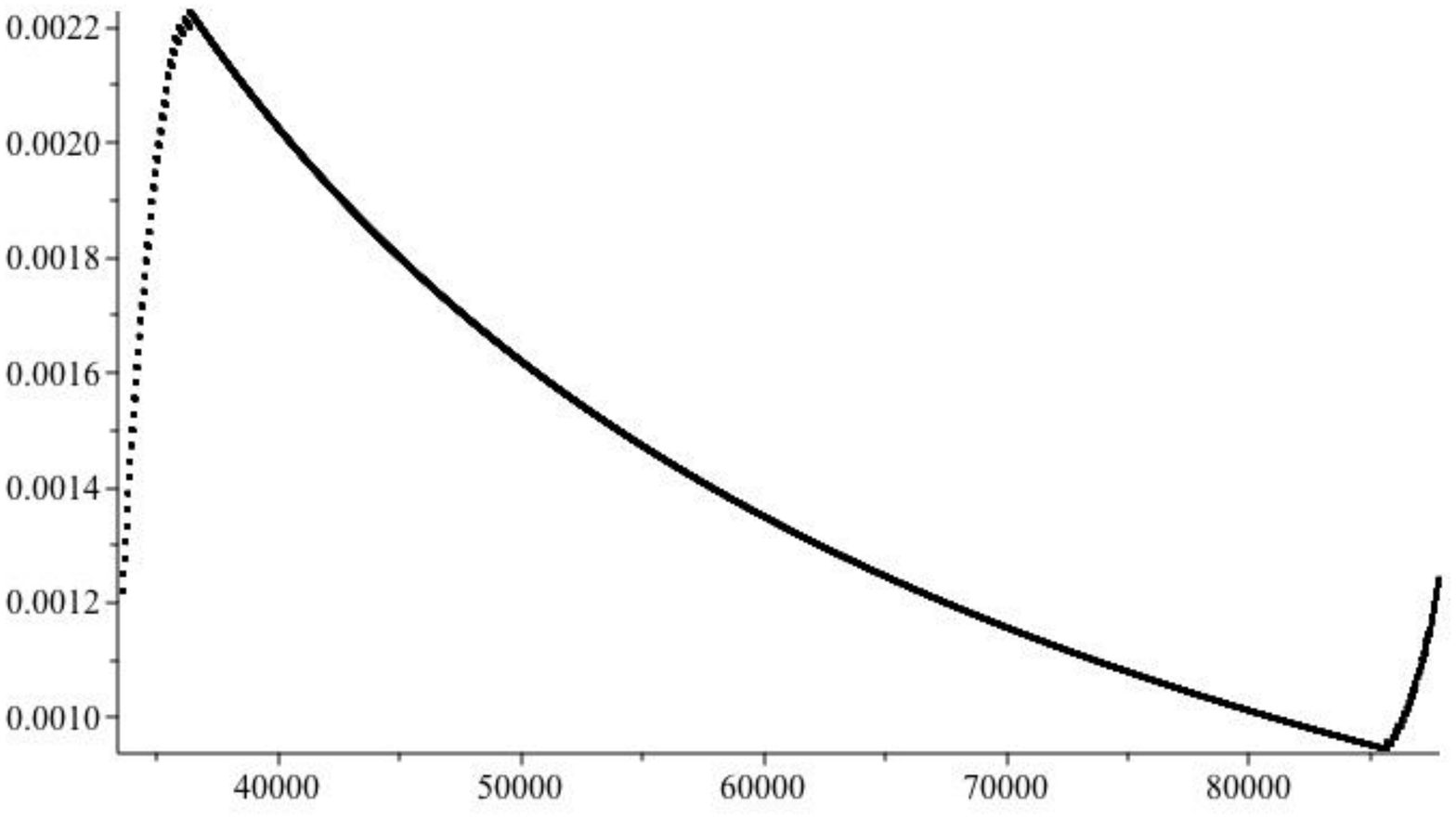}
		\caption{Data plot of $P(n,.3).$}
	\end{subfigure}
	\begin{subfigure}[h]{0.45\textwidth}
		\includegraphics[width=\textwidth]{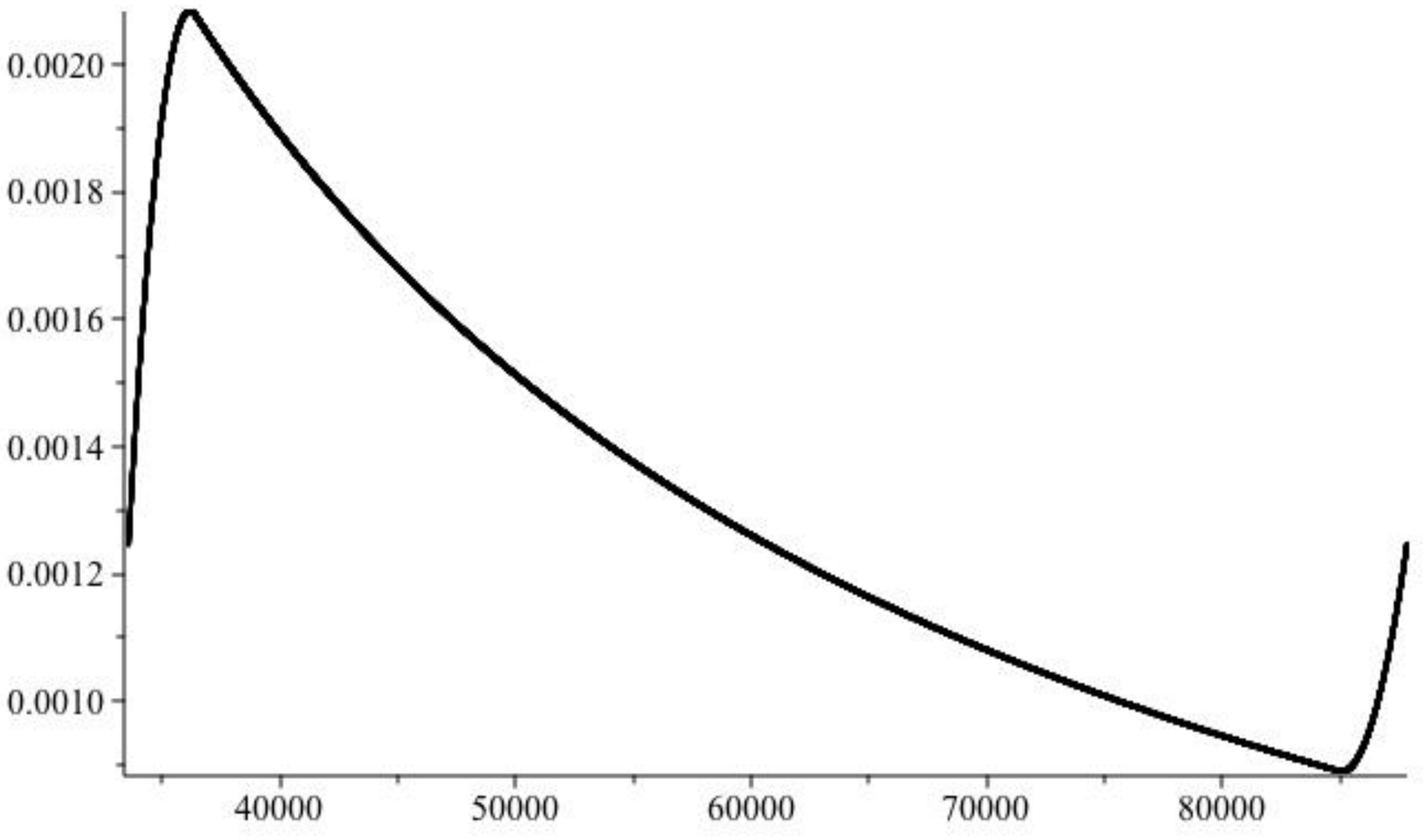}
		\caption{Theory plot of $P(n,.3).$}
	\end{subfigure}
	\caption{Different plots of $P(n,d)$ for different values of $d$.}
	\label{f3}
\end{figure}

Another natural question to ask is, for example, how large the gap size $d_{k+1}-d_{k}$ can be.  That is, how long can one go without seeing any down-integers?  From the first few terms listed in \eqref{D1}, we see that this difference can be 1, 2, 3 or 5.  Similarly one sees from \eqref{U1} that $u_{k+1}-u_k$ can also be 1, 2, 3, or 5.  Remarkably, these are the only four differences that can occur.

\begin{thm}\label{T-Gap}
	We have
	\begin{align*}
	\{d_{k+1}-d_{k}:k\ge 1\}=\{u_{k+1}-u_k:k\ge 1\}=\{1,2,3,5\}.
	\end{align*}
\end{thm}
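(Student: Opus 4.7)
The plan is to use Theorem~\ref{T-Char} to recast Theorem~\ref{T-Gap} entirely in terms of the parity of $t(n)$. By Theorem~\ref{T-Char}, $n\in D$ iff $t(n)$ is even and $n\in U$ iff $t(n)$ is odd, so a gap $d_{k+1}-d_k=g$ corresponds to a maximal run of $g-1$ consecutive integers on which $t(\cdot)$ is odd (and symmetrically for $U$). Thus Theorem~\ref{T-Gap} becomes the statement that every maximal run in the parity sequence $(t(n)\bmod 2)_{n\ge 2}$ has length in $\{1,2,4\}$, with each of these lengths realized. Realizability of all four gap sizes $1,2,3,5$ is immediate from \eqref{D1}--\eqref{U1}: the gap $23-18=5$ in $D$ arises from the odd-$t$ run $19,20,21,22$ (here $21=f_8$ has $t(21)=7$ while its three neighbors have $t=5$), and gaps of $1,2,3$ appear in both $D$ and $U$ among their first few terms.

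To exclude runs of length $3$ and runs of length $\ge 5$, I would fix $n$ with representation $(a,b,t)=(a(n),b(n),t(n))$ and explicitly compute $t(n+i)$ for $i=1,\ldots,5$. The key structural input is that, for each fixed $t$, the level set $N_t:=\{m\ge 2:t(m)=t\}$ decomposes, for each $a\in\{1,\ldots,f_t\}$, into the arithmetic progression $\{af_t+bf_{t-1}:a\le b\le f_t\}$ of common difference $f_{t-1}$. Hence locating $n+i$ inside some $N_{t'}$ reduces to a bounded search over a small set of feasible $t'$; Cassini's identity $f_{t+1}f_{t-1}-f_t^2=(-1)^t$ provides the conversion between representations at adjacent levels $t$ and $t\pm 1$ needed whenever $n+i$ leaves $N_t$. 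A case analysis on the position of $(a,b)$ within the triangle $\{1\le a\le b\le f_t\}$ then shows that the parity pattern of $(t(n),t(n+1),\ldots,t(n+5))$ never matches a forbidden template (five consecutive same-parity values, or three consecutive same-parity values sandwiched by opposite parities), so runs of length $3$ and runs of length $\ge 5$ are both impossible.

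The main obstacle is the bookkeeping in this case analysis, since $t$ can jump by more than $1$ between consecutive integers: for instance $t(33)=6$, $t(34)=8$, $t(35)=5$. These jumps occur precisely at integers whose Zeckendorf representation has a small lowest index, and the parity of $t$ at such positions depends sensitively on fine arithmetic data. The cleanest way to control the jumps is to track how the Zeckendorf representation of $n+i$ varies with $i$ and to combine this with the closed-form $t(f_r)=r-1$ (plus analogous formulas for Zeckendorf expansions $n=f_r+f_s$) to verify that the \emph{outlier} parities at high-$t$ positions always align with the parity of their immediate neighbors; this is exactly what forces a potential run of length $3$ to extend to length $4$ rather than terminate early. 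Once this alignment is verified on each remaining case, the two forbidden run lengths are ruled out and Theorem~\ref{T-Gap} follows.
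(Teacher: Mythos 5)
Your reformulation (gap sizes $\{1,2,3,5\}$ in $D$ and $U$ $\Leftrightarrow$ maximal constant-parity runs of $t(\cdot)$ never have length $3$ or length $\ge 5$) is correct, and your realizability examples check out ($t(21)=7$ with neighbors $19,20,22$ at $t=5$, etc.). But the exclusion half of your argument is a plan rather than a proof: the two decisive claims --- that no five consecutive integers can have $t$ of the same parity, and that a run of length $3$ is impossible --- are exactly what must be established, and your text asserts them ("a case analysis \ldots shows that the parity pattern never matches a forbidden template"; the Zeckendorf bookkeeping "is exactly what forces a potential run of length 3 to extend to length 4") without supplying a mechanism. Knowing that each level set $N_t$ is a union of arithmetic progressions $\{af_t+bf_{t-1}: a\le b\le f_t\}$ does not by itself control how the triple $(a(n+i),b(n+i),t(n+i))$ moves as $i$ increases, and nothing in the proposal rules out, a priori, arbitrarily long same-parity stretches.

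The paper closes precisely these two gaps with quantitative input you do not have. First, the bound $d_{k+1}-d_k\le 5$ (and $u_{k+1}-u_k\le 5$) comes from the fractional-part machinery: Lemma~\ref{L-Add} gives $\del_{n+\ell}=\{\del_n+\del_\ell\}$, Lemma~\ref{L-ConFloor} shows some $\ell\le 5$ always lands $\del_{n+\ell}$ below the threshold $\rec{\sqrt5}\phi^{-1}$, and Proposition~\ref{P-Thresh} converts that into membership in $D$ (resp.\ $U$). Second, ruling out gap $4$ uses Lemma~\ref{L-abDel}, which identifies $\del_n$ (or $\Del_n$) with $\phi^{-t}(\phi b-a)$; an interval for $\del_n$ then forces a lower bound such as $b-a>f_{t-2}$, and Cassini's or d'Ocagne's identity converts the representation of $n+4$ (or $n$) into a valid representation of $n+1$ or $n+2$ at the \emph{same even/odd} level $t$, contradicting the assumed parity pattern. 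If you want to pursue your Zeckendorf route, you would need to prove an analogue of this: a statement that pins down $t(n+i)\bmod 2$ from data about $n$ alone, with explicit inequalities replacing the paper's $\del$-interval analysis. As written, the proposal does not contain such a statement, so the proof is incomplete at its central step.
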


A similar result holds for the difference sets $d_{k+2}-d_k$ and $u_{k+2}-u_k$.
\begin{thm}\label{T-Gap2}
	We have
	\begin{align*}
	\{d_{k+2}-d_{k}:k\ge 1\}=\{u_{k+2}-u_k:k\ge 1\}=\{2,3,4,5,6,8,10\}.
	\end{align*}
\end{thm}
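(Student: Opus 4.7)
The plan is to combine Theorem~\ref{T-Gap} with a local analysis that eliminates one a priori possibility.

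Let $g_k := d_{k+1}-d_k$; by Theorem~\ref{T-Gap}, $g_k\in\{1,2,3,5\}$, and hence
\[
d_{k+2}-d_k \;=\; g_k+g_{k+1}\;\in\; \{1,2,3,5\}+\{1,2,3,5\}\;=\;\{2,3,4,5,6,7,8,10\}.
\]
It therefore suffices to realize each of $\{2,3,4,5,6,8,10\}$ and to exclude~$7$. The values $2,3,4,5$ and $8$ appear already among the first few two-step differences in~\eqref{D1}, while $6$ and $10$ come respectively from the patterns $(g_k,g_{k+1})=(3,3)$ and $(5,5)$. To realize the latter two I would select explicit representations $n=af_t+bf_{t-1}$ using Theorem~\ref{T-Char} at appropriately large $t$ and read off the parities of $t(\cdot)$ across the relevant window of integers. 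The same procedure with parities swapped realizes the seven values for $u_{k+2}-u_k$.

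The exclusion of $7$ is the crux. Since $7=2+5=5+2$, the task reduces to showing that whenever $g_k=5$ we have $g_{k-1}\ne 2$ and $g_{k+1}\ne 2$. By Theorem~\ref{T-Char}, $n\in D$ iff $t(n)$ is even, so a $5$-gap at position $k$ corresponds to four consecutive integers $d_k+1,\ldots,d_k+4$ with $t(\cdot)$ odd. I would combine the uniqueness of the representation $n=af_t+bf_{t-1}$ with the two-good-pair criterion $a>f_{t-1}$ from Theorem~\ref{T-Char} to pin down the small list of local configurations of $t$-parities that produce such a run, and in each case verify that $t(d_k-1)$ and $t(d_{k+1}+1)$ are themselves odd, forcing $g_{k-1}\ge 3$ and $g_{k+1}\ge 3$. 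The $U$-case is handled identically upon swapping parities.

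The main obstacle is this local structural analysis: it refines the classification behind Theorem~\ref{T-Gap} by one additional step, requiring one to read off the parity of $t(\cdot)$ at one integer beyond each endpoint of the $5$-gap. Producing the explicit witness for $(5,5)$ needed in the realization half also relies on this same structural understanding, since such a configuration arises only at ``transition'' points between different even-$t$ strata and must be located by hand using Theorem~\ref{T-Char}.
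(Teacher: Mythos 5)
Your reduction is the same as the paper's: since single gaps lie in $\{1,2,3,5\}$ (Theorem~\ref{T-Gap}), the sumset argument shows $d_{k+2}-d_k\in\{2,3,4,5,6,7,8,10\}$, so everything hinges on realizing $6$ and $10$ (the paper simply exhibits numerical witnesses such as $d_{53}-d_{51}=102-96=6$, $d_{374}-d_{372}=756-746=10$, $u_{961}-u_{959}=1927-1917=10$; your plan to hunt for them via Theorem~\ref{T-Char} is fine in principle but left undone) and, crucially, on excluding $7$.

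The exclusion of $7$ is where your sketch has a genuine gap, and the step you describe would not suffice even if carried out. You propose to show that a $5$-gap forces $g_{k-1}\ge 3$ and $g_{k+1}\ge 3$ by ``reading off the parity of $t(\cdot)$ at one integer beyond each endpoint of the $5$-gap.'' But knowing that $t(d_{k+1}+1)$ is odd only tells you $d_{k+1}+1\notin D$, i.e.\ $g_{k+1}\ge 2$ — which is exactly compatible with the configuration $7=5+2$ you are trying to rule out. To kill $5+2$ you must control \emph{two} integers past each endpoint, and doing so is the entire difficulty; ``pin down the small list of local configurations of $t$-parities'' is naming the problem rather than solving it. The paper's actual mechanism is quite different: it works with the fractional parts $\del_n=\phi n-\floor{\phi n}$ via Lemma~\ref{L-Add} and Lemma~\ref{L-ConFloor}, which guarantee that within any window of six consecutive integers at least two have $\del<\rec{\sqrt{5}}\phi^{-1}$ (hence lie in $D$ by Proposition~\ref{P-Thresh}) unless $\del_n$ falls in a few explicit exceptional intervals; each exceptional interval is then handled either by Theorem~\ref{T-Gap} (a forced gap of $4$ is impossible) or, in the residual range $\del_n\in(.039,.15)$, by an application of d'Ocagne's identity producing a second representation $n+2=(a+f_{t-6})f_t+(b-f_{t-5})f_{t-1}$ that forces $t(n+2)$ even. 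Your proposal contains none of this machinery, so as written the crux of the theorem remains unproved.
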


The rest of the paper is organized as follows.  In Section~\ref{S-Char} we prove Theorem~\ref{T-Char}.  We then apply our characterization theorem to prove our first two density results in Section~\ref{S-Den}.  In Section~\ref{S-Par} we prove Theorem~\ref{T-DenP}, along with some additional results related to paradoxical $n$.  In Section~\ref{S-Gaps} we prove our gap results.  In Section~\ref{S-End} we present a different way to view slow Fibonacci walks.  We use this change in perspective to give an elegant proof that there are at most two $n$-good pairs, to construct an $O(\log n)$ algorithm for finding all $n$-slow Fibonacci walks, and to give a magic trick involving  Fibonacci walks which is detailed in Section~\ref{S-Mag}.  We end with some concluding remarks and open questions in Section~\ref{S-Con}.

\section{The Characterization Theorem}\label{S-Char}
We derive some formulas for $w_k(b,a)$.  We adopt the convention that $f_0=0$ and $f_{-1}=1$.
\begin{lem}\label{L-Form}
	Let $a,b,k\ge 1$.
	\begin{enumerate}
		\item[(a)] \begin{align*}w_k(b,a)&=a f_{k-1}+bf_{k-2}\\&=\rec{\sqrt{5}}\l(a\phi^{k-1}+b\phi^{k-2}-a(-\phi)^{-k+1}-b(-\phi)^{-k+2}\r),\end{align*}
		
		\item[(b)] \[w_{k+1}(b,a)=\phi w_k(b,a)+(-\phi)^{1-k}(a-\phi b).\]
	\end{enumerate}
\end{lem}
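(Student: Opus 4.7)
My plan is to prove part (a) first by a short induction on $k$, then to deduce part (b) from part (a) by direct algebra. For the first identity of (a), the base cases use the conventions $f_0=0$, $f_{-1}=1$: we have $w_1(b,a)=b=af_0+bf_{-1}$ and $w_2(b,a)=a=af_1+bf_0$. The inductive step is a one-liner,
\[
w_{k+2}(b,a)=w_{k+1}(b,a)+w_k(b,a)=a(f_k+f_{k-1})+b(f_{k-1}+f_{k-2})=af_{k+1}+bf_k,
\]
using both the Fibonacci recurrence and the recurrence defining $w_k$. The second (closed-form) identity of part (a) then falls out by substituting Binet's formula $f_k=\rec{\sqrt{5}}(\phi^k-(-\phi)^{-k})$ (stated in the introduction) into $af_{k-1}+bf_{k-2}$ and collecting terms.

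For part (b), I would use the closed form from (a) and compute $\phi w_k(b,a)$ term by term. The key observation is the identity $\phi(-\phi)^m=-(-\phi)^{m+1}$, immediate from $(-\phi)^{m+1}=-\phi\cdot(-\phi)^m$. This shows that the $\phi^k$ and $\phi^{k-1}$ pieces cancel in $w_{k+1}(b,a)-\phi w_k(b,a)$, leaving
\[
w_{k+1}(b,a)-\phi w_k(b,a)=\f{-(1+\phi^2)}{\sqrt{5}}\l(a(-\phi)^{-k}+b(-\phi)^{1-k}\r).
\]
The substantive simplification is the identity $1+\phi^2=\phi+2=\sqrt{5}\phi$ (an immediate consequence of $\phi^2=\phi+1$ and the value of $\phi$), which reduces the prefactor from $-(1+\phi^2)/\sqrt{5}$ to $-\phi$. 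Distributing gives $a(-\phi)^{1-k}+b(-\phi)^{2-k}=(-\phi)^{1-k}(a-\phi b)$, which is the desired formula.

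There is no real obstacle here: both parts are essentially mechanical verifications, and I would expect the write-up to be three or four lines for each. The one step that is worth flagging is the identity $1+\phi^2=\sqrt{5}\phi$, which is what lets the $\sqrt{5}$ in the denominator cancel cleanly and produce the clean form of (b). An alternative route is a direct induction on $k$ for (b), where the inductive step reduces (after multiplying by $-\phi$) to the same identity with $k$ replaced by $k-1$; but going through (a) first is cleaner, and (a) is useful for later arguments in its own right.
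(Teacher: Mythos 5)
Your proposal is correct and follows essentially the same route as the paper: part (a) by induction plus Binet's formula, and part (b) by manipulating the closed form and invoking the identity $\f{1+\phi^2}{\sqrt{5}}=\phi$, which is exactly the step the paper highlights as well.
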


\begin{proof}
	The first equality of (a) follows by an easy induction argument, and the second comes from substituting in the closed formula for the Fibonacci numbers.
	
	For (b), we use (a) and the closed formula for the Fibonacci numbers to conclude
	\begin{align*}w_{k+1}(b,a)=&\rec{\sqrt{5}}\l(a\phi^{k}+b\phi^{k-1}-a(-\phi)^{-k}-b(-\phi)^{-k+1}\r)\\ 
	=&\rec{\sqrt{5}}\l(a\phi^{k}+b\phi^{k-1}+a(-\phi)^{-k+2}+b(-\phi)^{-k+3}\r)\\ &-\rec{\sqrt{5}}\l(a(-\phi)^{-k}+b(-\phi)^{-k+1}+a(-\phi)^{-k+2}+b(-\phi)^{-k+3}\r)\\ 
	=&\phi w_k(b,a)+\f{(1+\phi^2)}{\sqrt{5}}(-\phi)^{-k}(\phi b-a),\end{align*}
	
	and we get our final result by observing that $\f{1+\phi^2}{\sqrt{5}}=\phi$.
\end{proof}

We next derive some structural results for $n$-good pairs.  We recall that $f_k$ and $f_{k+1}$ are relatively prime for $k\ge 1$, as well as Cassini's identity \cite{GKP} \[f_{k-1}f_{k+1}-f_{k}^2=(-1)^k.\]
\begin{lem}\label{L-Facts}
	Assume that $(b,a)$ is $n$-good with $s(n)=s$.
	\begin{itemize}
		\item[(a)] We have $a\le b$.
		
		\item[(b)] The pair $(b',a')$ with $a',b'\ge 1$ is $n$-good if and only if $a'=a+kf_{s-2}\ge 1$ and $b'=b-k f_{s-1}\ge 1$  for some $k\in \Z$. 
		
		\item[(c)] With $k$ as above, we have $w_{s+1}(b,a)-w_{s+1}(b',a')=(-1)^sk$.
	\end{itemize}
\end{lem}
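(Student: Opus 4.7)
The plan is to handle the three parts in order, all resting on the closed form $w_k(b,a)=af_{k-1}+bf_{k-2}$ from Lemma~\ref{L-Form}(a), together with $\gcd(f_{s-1},f_{s-2})=1$ and Cassini's identity.

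For part (a), I would argue by contradiction: suppose $a>b$. Then the pair $(a-b,b)$ consists of positive integers, and a direct unrolling gives $w_1=a-b$, $w_2=b$, $w_3=a$, $w_4=a+b$, so that $w_{k+1}(a-b,b)=w_k(b,a)$ for all $k\ge 2$. In particular $w_{s+1}(a-b,b)=w_s(b,a)=n$, which would force $s(n)\ge s+1$, contradicting $s(n)=s$. The only subtle point is the off-by-one indexing, which the short calculation above pins down.

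For part (b), I would treat the two directions separately. In the forward direction, if $(b',a')$ is $n$-good then Lemma~\ref{L-Form}(a) gives $a'f_{s-1}+b'f_{s-2}=n=af_{s-1}+bf_{s-2}$, hence $(a-a')f_{s-1}=(b'-b)f_{s-2}$. Because $\gcd(f_{s-1},f_{s-2})=1$, there must exist $k\in\Z$ with $a-a'=-kf_{s-2}$ and $b'-b=-kf_{s-1}$, i.e.\ $a'=a+kf_{s-2}$ and $b'=b-kf_{s-1}$. Conversely, given such a $k$ with $a',b'\ge 1$, a direct substitution
\begin{align*}
w_s(b',a')&=a'f_{s-1}+b'f_{s-2}\\
&=(a+kf_{s-2})f_{s-1}+(b-kf_{s-1})f_{s-2}\\
&=af_{s-1}+bf_{s-2}=n
\end{align*}
shows that $(b',a')$ reaches $n$ in exactly $s$ steps, so $s(n;b',a')=s=s(n)$ and $(b',a')$ is $n$-good.

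For part (c), I would just compute: using the parametric form from (b),
\begin{align*}
w_{s+1}(b',a')-w_{s+1}(b,a)&=(a'-a)f_s+(b'-b)f_{s-1}\\
&=kf_{s-2}f_s-kf_{s-1}^2\\
&=k\bigl(f_{s-2}f_s-f_{s-1}^2\bigr)=k(-1)^{s-1}
\end{align*}
by Cassini's identity, so $w_{s+1}(b,a)-w_{s+1}(b',a')=(-1)^sk$. None of these steps is really an obstacle; the only thing to be careful about is the correct indexing in part (a) and keeping track of the sign of $k$ when passing between the two parametric descriptions in part (b).
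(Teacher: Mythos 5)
Your proof is correct and follows essentially the same route as the paper's: part (a) via the reindexed walk $w_{k+1}(a-b,b)=w_k(b,a)$ (the paper phrases this as the representation $n=bf_s+(a-b)f_{s-1}$ contradicting maximality of $s$), part (b) via coprimality of $f_{s-1},f_{s-2}$ applied to the Diophantine equation $a'f_{s-1}+b'f_{s-2}=n$, and part (c) by direct substitution and Cassini's identity. No gaps worth noting.
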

\begin{proof}
	We first note that $(b,a)$ being $n$-good together with Lemma~\ref{L-Form} implies that $n=w_{s}(b,a)=af_{s-1}+bf_{s-2}$ and that there exists no $a',b'\ge 1$ with $n=a'f_{s}+b'f_{s-1}$.
	
	For (a), having $a>b$ would imply $n=bf_s+(a-b)f_{s-1}$ with $b,a-b\ge 1$, a contradiction to the remarks made above. 
	
	For (b), note that $(b',a')$ is $n$-good if and only if it is a positive solution to the Diophantine equation $n=a'f_{s-1}+b'f_{s-2}$ defined in Lemma~\ref{L-Form}(a).  The result then follows from the fact that $f_{s-1}$ and $f_{s-2}$ are relatively prime.  
	
	For (c), we have by Lemma~\ref{L-Form}(a) that \begin{align*}w_{s+1}(b,a)-w_{s+1}(b',a')&=af_{s}+bf_{s-1}-(a+kf_{s-2})f_{s}-(b-kf_{s-1})f_{s-1}\\ &=k(f_{s-1}^2-f_{s-2}f_{s})=(-1)^s k,\end{align*} where the last step follows from Cassini's identity.
\end{proof}

We next give a lower bound for $s(n)$.
\begin{lem}\label{L-Chicken}
	Let $s$ be a fixed integer.  If $n> f_{s-1}f_{s-2}$, then $s(n)\ge s$.
\end{lem}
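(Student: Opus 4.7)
The plan is to translate the statement into a Diophantine existence problem and invoke the Frobenius / ``Chicken McNugget'' theorem (the lemma's label is a strong hint). By Lemma~\ref{L-Form}(a), any walk value satisfies $w_s(b,a) = af_{s-1}+bf_{s-2}$. Hence to prove $s(n)\ge s$ it suffices to produce integers $a,b\ge 1$ with
\[
af_{s-1}+bf_{s-2}=n.
\]

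First, I would record that $\gcd(f_{s-1},f_{s-2})=1$, which the paper has already noted just before Lemma~\ref{L-Facts}. This allows us to appeal to the classical two-variable Frobenius theorem: for coprime positive integers $p,q$, every integer strictly greater than $pq-p-q$ can be written as $xp+yq$ with $x,y\ge 0$.

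Second, to force $a,b\ge 1$ rather than only $\ge 0$, I would substitute $a=x+1$, $b=y+1$, reducing the problem to finding $x,y\ge 0$ with
\[
xf_{s-1}+yf_{s-2}=n-f_{s-1}-f_{s-2}.
\]
Third, the hypothesis $n>f_{s-1}f_{s-2}$ gives $n-f_{s-1}-f_{s-2}>f_{s-1}f_{s-2}-f_{s-1}-f_{s-2}$, which is exactly the Frobenius number for the coprime pair $(f_{s-1},f_{s-2})$. So a non-negative solution $(x,y)$ exists, and the corresponding $(b,a)=(y+1,x+1)$ is a pair of positive integers with $w_s(b,a)=n$, proving $s(n)\ge s$.

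There is no serious obstacle; the only care needed is the shift by $1$ in each variable to move from the standard nonnegative Frobenius statement to the strict positivity required for a legitimate Fibonacci walk. The bound $f_{s-1}f_{s-2}$ in the hypothesis is precisely tuned so that after the shift we land strictly above the Frobenius number.
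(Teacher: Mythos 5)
Your proof is correct and is essentially identical to the paper's: both reduce the problem via Lemma~\ref{L-Form}(a) to representing $n - f_{s-1} - f_{s-2}$ as a nonnegative combination of the coprime pair $(f_{s-1}, f_{s-2})$ and then apply the Frobenius (Chicken McNugget) bound. No further comment is needed.
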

\begin{proof}
	By Lemma~\ref{L-Form}(a), $s(n)$ is the largest integer such that there exists some $a,b\ge 1$ with $n= af_{s(n)-1}+bf_{s(n)-2}$.  Thus it will be enough to show that there exists $a',b'\ge 0$ such that $a'f_{s-1}+b'f_{s-2}=n-f_{s-1}-f_{s-2}$.  By the Frobenius Coin Problem \cite{Syl}, this is possible provided $n-f_{s-1}-f_{s-2}>f_{s-1}f_{s-2}-f_{s-1}-f_{s-2}$, proving the result.
\end{proof}

We can now establish some strong bounds on the sizes of $a$ and $b$.
\begin{lem}\label{L-abBound}
	If $(b,a)$ is $n$-good with $s(n)=s>2$, then $a\le f_{s-1}$ and $b\le 2f_{s-1}$.
\end{lem}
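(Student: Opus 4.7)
The plan is to prove the two bounds in sequence, using the first to deduce the second. For the inequality $a \le f_{s-1}$, I will argue by contradiction. Suppose $a \ge f_{s-1} + 1$. By Lemma~\ref{L-Facts}(a), $b \ge a \ge f_{s-1} + 1$, and Lemma~\ref{L-Form}(a) then gives
\[
n = af_{s-1} + bf_{s-2} \ge (f_{s-1}+1)(f_{s-1}+f_{s-2}) = (f_{s-1}+1)f_s > f_s f_{s-1}.
\]
Applying Lemma~\ref{L-Chicken} with $s$ replaced by $s+1$ then yields $s(n) \ge s+1$, contradicting the hypothesis $s(n) = s$.

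For the inequality $b \le 2f_{s-1}$, I will again argue by contradiction, supposing $b \ge 2f_{s-1} + 1$. The idea is to exploit Lemma~\ref{L-Facts}(b) to produce a second $n$-good pair and then use the first bound on this new pair. Specifically, taking $k = 2$ in Lemma~\ref{L-Facts}(b) gives the pair $(b - 2f_{s-1},\, a + 2f_{s-2})$, whose entries are both at least $1$ (the first by our standing assumption, the second since $a \ge 1$); by Lemma~\ref{L-Facts}(b) this pair is $n$-good. Applying the first part of the lemma, already proven, to this new pair yields $a + 2f_{s-2} \le f_{s-1}$, which rearranges to $a \le f_{s-1} - 2f_{s-2} = f_{s-3} - f_{s-2}$. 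Since $f_{s-3} \le f_{s-2}$ for every $s \ge 3$ (with equality only at $s=4$), this forces $a \le 0$, contradicting $a \ge 1$.

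The main subtlety will be identifying the correct value of $k$ in Lemma~\ref{L-Facts}(b). Taking $k = 1$ only yields $a \le f_{s-3}$, which is consistent with $a \ge 1$ and hence does not produce a contradiction; the second shift ($k = 2$) is what is needed, because the identity $f_{s-1} - 2f_{s-2} = f_{s-3} - f_{s-2}$ is nonpositive for every $s \ge 3$. The bound $b \le 2f_{s-1}$ is in fact tight, achieved in the two-good-pair case of Theorem~\ref{T-Char} by the secondary pair $(b+f_t,\, a-f_{t-1})$ when the canonical $b$ equals $f_t = f_{s-1}$.
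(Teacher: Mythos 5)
Your proof is correct. The first half is exactly the paper's argument: if $a>f_{s-1}$ then Lemma~\ref{L-Facts}(a) and Lemma~\ref{L-Form}(a) give $n\ge af_s>f_sf_{s-1}$, whence $s(n)\ge s+1$ by Lemma~\ref{L-Chicken}. For the second half both you and the paper produce a second $n$-good pair by shifting with $k=2$ in Lemma~\ref{L-Facts}(b), but you close the argument differently. The paper writes $b=kf_{s-1}+r$ with $r<f_{s-1}$, splits into the cases $k>2$ and $k=2$, and derives the contradiction from the ordering condition $a'\le b'$ of Lemma~\ref{L-Facts}(a) applied to the reduced pair $(r,\,a+kf_{s-2})$, which forces $r\ge 1+2f_{s-2}\ge 1+f_{s-1}$. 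You instead apply the already-established bound $a'\le f_{s-1}$ to the shifted pair, getting $a\le f_{s-1}-2f_{s-2}=f_{s-3}-f_{s-2}\le 0$. Your route buys a small simplification: it needs no case analysis on $k$ (and no care about whether $r=0$), and it reuses the first half of the lemma rather than invoking Lemma~\ref{L-Facts}(a) a second time. The paper's route has the mild advantage of not depending on the order in which the two bounds are proved. Both are sound; your observation that $k=1$ would be insufficient is accurate and correctly identifies why the shift by $2f_{s-1}$ is the right move.
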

\begin{proof}
	If $a> f_{s-1}$, then by Lemma~\ref{L-Facts}(a) we would have $n\ge af_{s-1}+af_{s-2}>f_{s}f_{s-1}$, which implies that $s(n)\ge s+1$ by Lemma~\ref{L-Chicken}, a contradiction.
	
	For the bound on $b$, assume for contradiction that $b=kf_{s-1}+r$ with $r<f_{s-1}$ and $k\ge 2$.  In this case another $n$-good pair is $(b',a')=(r,a+kf_{s-2})$ by Lemma~\ref{L-Facts}.  If $k>2$, then this implies that $a'>2f_{s-2}>f_{s-1}$ (since $f_{s-2}\ne 0$), which can not happen by what we have just proven, so we can assume $k=2$ and $r>0$.  In order for this pair to be $n$-good, we need $b'\le a'$ by Lemma~\ref{L-Facts}(a), and hence \[r=b'\ge a'\ge 1+2f_{s-2}\ge 1+f_{s-1}\] since $s>2$, a contradiction to how $r$ was defined, so we conclude the desired bound.
\end{proof}

We are now able to prove our main result.

\begin{proof}[Proof of Theorem~\ref{T-Char}]
	By Lemma~\ref{L-Form}(a), $s(n)=s$ implies that $(b,a)$ is $n$-good for some $a,b\ge 1$ such that $n=af_{s-1}+bf_{s-2}$.  Note that $n\ge2$ implies that $s>2$.  Thus we can assume that $b\le f_{s-1}$, as otherwise we could instead consider the $n$-good pair $(b-f_{s-1},a+f_{s-2})$, noting that by Lemma~\ref{L-abBound} we have $b-f_{s-1}\le f_{s-1}$.  Since $(b,a)$ is $n$-good, we must have $a\le b\le f_{s-1}$ by Lemma~\ref{L-Facts}(a).  Thus choosing $t(n)=s-1,\ a(n)=a$, and $b(n)=b$ shows that such integers exist.  Moreover, $(b,a)$ is $n$-good and $s(n)=t+1$ by construction.
	
	To show that these integers are unique, assume that $n=af_{t}+bf_{t-1}$ with $a,b,t$ as in the hypothesis of the theorem, and assume that $s(n)\ge t+2$.  This implies that there exists $a',b'\ge 1$ such that $n=a'f_{t+1}+b' f_{t}=(a'+b')f_{t}+a'f_{t-1}$.  Since $f_{t},f_{t-1}$ are relatively prime, this implies that there exists a $k$ such that \begin{align*}a'&=b+kf_{t},\\  b'&=a-a'-kf_{t-1}=a-b-kf_{t+1}.\end{align*}  Since $a'\ge 1$ and $b\le f_{t}$, we must have $k\ge 0$.  Similarly since $b'\ge 1$ and $a-b\le 0$, we must have $k\le -1$.  Thus no such $k$ exits and we conclude that we must have $t=s-1$, and the uniqueness of $a,b$ of the desired form follows from Lemma~\ref{L-Facts}(b).  From now on we let $a,b,t=s-1$ denote these unique integers corresponding to $n$.
	
	By Lemma~\ref{L-Facts}(b) together with the fact that $a\le f_{t}\le 2f_{t-1}$ and $b\le f_{t}$, the only pairs that could be $n$-good are $(b,a)$ and $(b',a')=(b+f_{t},a-f_{t-1})$, with the second pair being good if and only if $a-f_{t-1}\ge 1$.  Moreover, by Lemma~\ref{L-Facts}(c) we have $w_{s+1}(b',a')=w_{s+1}(b,a)-(-1)^t$, so all that remains is to prove the result concerning $w_{s+1}(b,a)$.
	
	Observe that having $w_{s+1}(b,a)=\floor{\phi n}$ is equivalent to $\phi n-w_{s+1}(b,a)$ being a positive number less than one.  Since $n=w_{s}(b,a)$ and $s=t+1$, by Lemma~\ref{L-Form}(b) this is equivalent to having
	\[
		0<(-\phi)^{-t}(\phi b-a)<1.
	\]  Using $1\le a\le b\le f_t$, we have that \[0<\phi^{-t}(\phi b-a)\le \phi^{1-t} f_t-\phi^{-t}\le \rec{\sqrt{5}}\phi+\l(\rec{\sqrt{5}}\phi^{1-2t}-\phi^{-t}\r)\le \rec{\sqrt{5}}\phi<1.\]
	Thus our relevant quantity is always less than 1 in absolute value, and it will be positive if and only if $t$ is even.  We similarly find that $w_{s+1}(b,a)=\ceil{\phi n}$ if and only if $t$ is odd, proving the result.
\end{proof}

\section{Densities}\label{S-Den}
In this section we prove our first two density results. 

\begin{proof}[Proof of Theorem~\ref{T-Den2}]
	Given $n$, let $g(t)$ denote the number of pairs $(a,b)$ such that $f_{t-1}+1\le a\le b\le f_t$ and $af_{t}+bf_{t-1}\le n$.  Equivalently, $g(t)$ is the number of pairs $(a,b)$ satisfying 
	\begin{align}
	1\le a\le b\le f_t-f_{t-1}&=f_{t-2}, \label{E-1T'}\\ 
	af_t+bf_{t-1}\le n-f_{t-1}(f_t+f_{t-1})&=n-f_{t-1}f_{t+1}. \label{E-2T'}
	\end{align}
	Define $h(t)$ to be the number of $(a,b)$ satisfying the following two conditions:
	\begin{align}
	0\le a\le b&\le \rec{\sqrt{5}}\phi^{t-2}, \label{E-1T}\\ 
	a\phi^t+b\phi^{t-1}&\le c\phi^{p}-\rec{\sqrt{5}}\phi^{2t}. \label{E-2T}
	\end{align}
	Ultimately we are interested in computing  $g(t)$.  The following claim shows that it will be enough to compute $h(t)$, whose conditions are easier to work with.  Note that by using the closed form for $f_k$, we find that \eqref{E-2T'} is equivalent to {\small\begin{equation}a\phi^t+b\phi^{t-1}\le c\phi^{p}-\rec{\sqrt{5}}\phi^{2t}+a(-\phi)^{-t}+b(-\phi)^{-t+1}+\rec{\sqrt{5}}(-1)^t(\phi^2+\phi^{-2})+\rec{\sqrt{5}}\phi^{-2t}.\label{E-2T''}\end{equation}}
	
	\begin{claim}\label{Cl-Equiv}
		$|g(t)-h(t)|=O(\phi^t)$.
	\end{claim}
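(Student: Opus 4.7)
The plan is to bound the symmetric difference between the sets of pairs defining $g(t)$ and $h(t)$ by $O(\phi^t)$ lattice points. The two systems of constraints differ in three respects, and I would handle each contribution separately.

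First, for the lower-bound discrepancy ($a\ge 1$ in \eqref{E-1T'} versus $a\ge 0$ in \eqref{E-1T}), only pairs with $a=0$ are affected. For such pairs $b$ ranges over integers in $[0,\rec{\sqrt 5}\phi^{t-2}]$, producing at most $O(\phi^t)$ pairs. Second, the upper bounds $b\le f_{t-2}$ and $b\le \rec{\sqrt 5}\phi^{t-2}$ differ by $|f_{t-2}-\rec{\sqrt 5}\phi^{t-2}|=\rec{\sqrt 5}\phi^{-(t-2)}=O(\phi^{-t})$, thanks to the closed form $f_k=\rec{\sqrt 5}(\phi^k-(-\phi)^{-k})$. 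For $t$ large this gap is below $1$, so the two upper bounds on $b$ admit the same integers up to at most one value; for each such $b$ there are $O(\phi^t)$ choices of $a$, contributing $O(\phi^t)$ pairs overall.

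The main step is comparing the linear inequalities \eqref{E-2T''} and \eqref{E-2T}. Their right-hand sides differ by
\[
a(-\phi)^{-t}+b(-\phi)^{-t+1}+\rec{\sqrt 5}(-1)^t(\phi^2+\phi^{-2})+\rec{\sqrt 5}\phi^{-2t}.
\]
When $0\le a,b\le \rec{\sqrt 5}\phi^{t-2}$, every summand is $O(1)$, so the pairs satisfying one inequality but not the other lie in a slab of the form $|a\phi^t+b\phi^{t-1}-L|=O(1)$ for the appropriate constant $L$. For each integer $a$ with $0\le a\le \rec{\sqrt 5}\phi^{t-2}$, the residual constraint pins $b\phi^{t-1}$ to an interval of length $O(1)$, which contains $O(1)$ integer values of $b$ since $\phi^{t-1}\ge 1$. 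Summing over the $O(\phi^t)$ eligible values of $a$ yields at most $O(\phi^t)$ lattice points in the slab. Combining the three contributions gives $|g(t)-h(t)|=O(\phi^t)$, as claimed.

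I expect the main obstacle to be bookkeeping rather than any individual estimate: one must take some care to partition the symmetric difference cleanly into the three pieces above without double counting (for instance, pairs with $a=0$ lying near the boundary of the linear constraint). The underlying analytic estimates are routine once the error terms have been identified as being either negligible $O(\phi^{-t})$ tails of the Binet formula or $O(1)$ contributions producing a single thin slab with $O(\phi^t)$ lattice points.
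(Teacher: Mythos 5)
Your proposal is correct and follows essentially the same route as the paper: isolate the three boundary discrepancies ($a=0$, the at-most-one extra value of $b$ near $f_{t-2}$ versus $\rec{\sqrt{5}}\phi^{t-2}$, and the $O(1)$ shift between the two linear inequalities) and bound each by $O(\phi^t)$ lattice points. The only cosmetic difference is in the last step, where you fix $a$ and observe that $b$ is pinned to an interval of length $O(\phi^{-(t-1)})$, whereas the paper fixes $b$ and shifts $a$ by one; both yield the same bound.
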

	\begin{proof}
		The statement is trivially true if $t\le 3$, so assume $t\ge 4$.  Note that $f_{t-2}$ is the closest integer to $\rec{\sqrt{5}}\phi^{t-2}$, so we always have $|f_{t-2}-\rec{\sqrt{5}}\phi^{t-2}|<1$.  Thus the only $(a,b)$ that could satisfy \eqref{E-1T'} but not \eqref{E-1T} are those with $b=\ceil{\rec{\sqrt{5}}\phi^{t-2}}$ and $a\le \ceil{\rec{\sqrt{5}}\phi^{t-2}}$, and the number of such pairs is precisely $\ceil{\rec{\sqrt{5}}\phi^{t-2}}$.  Using similar logic, we find that the only pairs satisfying \eqref{E-1T} but not \eqref{E-1T'} are those with $a=0$, and there are at most $\floor{\rec{\sqrt{5}}\phi^{t-2}}$ such pairs.  If $j(t)$ counts the number of $(a,b)$ satisfying \eqref{E-1T} and \eqref{E-2T''} (which again is equivalent to \eqref{E-2T'}), then we conclude that $|g(t)-j(t)|\le \ceil{\rec{\sqrt{5}}\phi^{t-2}}$.  It remains is to show that $|j(t)-h(t)|=O(\phi^t)$.

		Observe that since any valid pair for either $j(t)$ or $h(t)$ has $a,b\le \phi^{t-1}$, the difference between the right side of \eqref{E-2T''} and the right side of \eqref{E-2T} is less than 5.  Since $\phi^t\ge 5$ for $t\ge 4$, we conclude that if $(a,b)$ satisfies \eqref{E-1T} and \eqref{E-2T} with $a\ge 2$, then $(a-1,b)$ satisfies \eqref{E-1T} and \eqref{E-2T''}.  
		
		Given $b$, let $a_b$ denote the largest $a$ such that $(a_b,b)$ is counted by $j(t)$, with $a_b=-1$ if no such value exists.  Observe that $(a,b)$ is counted by $j(t)$ for all $0\le a\le a_b$.  Now let $b\le \rec{\sqrt{5}}\phi^{t-2}$ be fixed.  If $(a,b)$ is counted by $h(t)$ but not $j(t)$, then either $a=0$ or $(a-1,b)$ is counted by $j(t)$, so $a-1\le a_b$, and hence $a=a_b+1$ since $(a,b)$ was not counted by $j(t)$.  Thus for each of the at most $\floor{\rec{\sqrt{5}}\phi^{t-2}}$ fixed values that $b$ can take on, the only pair that could be counted by $h(t)$ but not $j(t)$ is $(a_b+1,b)$.  The same reasoning shows that there are at most this many pairs counted by $j(t)$ but not $h(t)$.  We conclude that $|j(t)-h(t)|\le \floor{\rec{\sqrt{5}}\phi^{t-2}}$ and the desired result follows.
	\end{proof}
	We now wish to estimate $h(t)$ for various $t$.  We first observe that \eqref{E-2T} implies that $h(t)=0$ whenever $2t\ge p+1$, and one can similarly see from \eqref{E-2T''} that $g(t)=0$ whenever $2t\ge p+2$ and $p$ is sufficiently large.  We note that \eqref{E-1T} implies \[a\phi^t+b\phi^{t-1}+\rec{\sqrt{5}}\phi^{2t}\le \rec{\sqrt{5}}\phi^{2t-1}+\rec{\sqrt{5}}\phi^{2t}=\rec{\sqrt{5}}\phi^{2t+1}\le c \phi^{2t+1},\] and hence \eqref{E-2T} will always be satisfied when $2t\le p-1$ and $1\le a\le b \le \rec{\sqrt{5}}\phi^{t-2}$, so we conclude that $h(t)=\rec{10}\phi^{2t-4}+O(\phi^t)$ in this case.
	
	It remains to deal with the case $2t=p$.  Define $d:=c-\rec{\sqrt{5}}$.  In this setting, by considering the extremal value $a=0$, we find that \eqref{E-1T} and \eqref{E-2T} are equivalent to
	\begin{align}
	0\le b&\le\min\l\{\rec{\sqrt{5}}\phi^{p/2-2}, d\phi^{p/2+1}\r\}, \label{E-bT}\\ 
	0\le a&\le \min\l\{b,d\phi^{p/2}-\phi^{-1}b\r\}\label{E-aT}.
	\end{align}
	We first consider $b\le d\phi^{p/2-1}$, in which case \eqref{E-aT} reduces to $a\le b$.  When $b$ is this small, \eqref{E-bT} is always satisfied since $d\le\rec{\sqrt{5}}(\phi-1)=\rec{\sqrt{5}}\phi^{-1}$.  Thus any $0\le a\le b\le d\phi^{p/2-1}$ satisfies both of these equations, giving a count of $\half d^2\phi^{p-2}+O(\phi^{p/2})$.
	
	We now consider pairs with $b\ge d\phi^{p/2-1}$, noting that in this range \eqref{E-aT} reduces to $a\le d\phi^{p/2}-\phi^{-1}b$.  If $d\le \rec{\sqrt{5}}\phi^{-3}$, then \eqref{E-bT} reduces to $b\le  d \phi^{p/2+1}$, in which case the count becomes \begin{align*}\sum_{b=d\phi^{p/2-1}}^{d\phi^{p/2+1}} d\phi^{p/2}-\phi^{-1}b&=(\phi-\phi^{-1})d^2\phi^p-\half\phi^{-1}(\phi^2-\phi^{-2}) d^2\phi^p+O(\phi^{p/2})\\ &=d^2\phi^p-\half\phi^{-1}(\phi^2-\phi^{-2}) d^2\phi^p+O(\phi^{p/2}),\end{align*} where we used $\phi-\phi^{-1}=1$ and that $\sum_{k=x}^yz-k=(y-x+1)z-{y+1\choose 2}+{x\choose 2}$.  Technically, we should be taking floors and ceilings of the terms of the above sum, as well as on the bounds that $b$ ranges through in the sum.  However, this miscalculation gets absorbed into the $O(\phi^{p/2})$ error term, so this will not affect our final result.  Adding $\half d^2\phi^{p-2}+O(\phi^{p/2})$ to this and using that $1-\half \phi^{-1}(\phi^2-\phi^{-2})+\half \phi^{-2}=\half$ gives \[h(p/2)=\half d^2\phi^p+O(\phi^{p/2})\tr{ when }d\le \rec{\sqrt{5}}\phi^{-3}.\]
	
	Now if $d\ge \rec{\sqrt{5}}\phi^{-3}$, then \eqref{E-bT} reduces to $b\le \rec{\sqrt{5}}\phi^{p/2-1}$, in which case the contribution from $b\ge d\phi^{p/2-1}$ becomes \[\sum_{b=d\phi^{p/2-1}}^{\rec{\sqrt{5}}\phi^{p/2-2}} d\phi^{p/2}-\phi^{-1}b=\phi^{-1}\l(\rec{\sqrt{5}}\phi^{-1}-d\r)d\phi^p-\half \phi^{-3}\l(\rec{5}\phi^{-2}-d^2\r)\phi^p.\] 
	
	We note that implicitly here we use that $d\le \rec{\sqrt{5}}\phi^{-1}$, as otherwise taking this sum would give us a negative number.  Adding $\half d^2\phi^{-2}\phi^p$ and using $-\phi^{-1}+\half\phi^{-2}+\half \phi^{-3}=-\half\phi^{-1}$ gives
	\[
	h(p/2)= \l(-\half \phi^{-1}d^2+\rec{\sqrt{5}}\phi^{-2}d-\rec{10}\phi^{-5}\r)\phi^p+O(\phi^{p/2})\tr{ when }d\ge \rec{\sqrt{5}}\phi^{-3}.
	\]
	
	By Theorem~\ref{T-Char}, we have $T(n)=\rec{n}\sum g(t)$, where we note that the uniqueness of $a,b,t$ ensures that we do not count some $m\le n$ twice in this sum.  Since $g(t)=0$ for $2t\ge p+2$, we can restrict our sum to the range $2t\le p+1$.  Define $H(n)=\rec{n}\sum h(t)$.  Since $h(t)=0$ for $2t\ge p+1$, we have by Claim~\ref{Cl-Equiv} that \[|T(n)-H(n)|\le O(\phi^{-p})\cdot\sum_{1\le t\le p/2+1} O(\phi^t)=O(\phi^{-p/2}).\]  Thus bounding $H(n)$ is equivalent to bounding $T(n)$ up to an $O(\phi^{-p/2})$ error term.
	
	If $p$ is odd then \begin{align*}H(n)&=\rec{n}\sum_{t\le (p-1)/2}h(t)=\rec{10\phi^4n}\sum_{t\le (p-1)/2} \phi^{2t}+O(\phi^t)\\ &=\rec{2\sqrt{5}\phi^4 c}\phi^{-p}\cdot \f{\phi^{p+1}}{\phi^2-1}+O(\phi^{-p/2})=\rec{2\sqrt{5}\phi^4 c}+O(\phi^{-p/2}),\end{align*} where we used that $\phi^2-1=\phi$. 
	
	If $p$ is even we note that \[\sum_{t\ne p/2} h(t)=\rec{10} \phi^{-4}\sum_{t\le p/2-1}\phi^{2t}+O(\phi^t)=\rec{10}\phi^{-5}\phi^p+O(\phi^{p/2}).\]  Using this together with $d^2=c^2-\f{2}{\sqrt{5}}c+\rec{5}$, we find for $p$ even and $d\le \rec{\sqrt{5}}\phi^{-3}$ that
	\[
	H(n)=\f{\sqrt{5}}{2}c-1+\f{1+\phi^{-5}}{2\sqrt{5}c}+O(\phi^{-p/2}),
	\]
	and similarly for $d\ge \rec{\sqrt{5}}\phi^{-3}$ we get
	\begin{align*}
	H(n)&=-\f{\sqrt{5}}{2}\phi^{-1}c+(\phi^{-1}+\phi^{-2})+\rec{\sqrt{5}c}\l(-\half\phi^{-1}-\phi^{-2}\r)\\&=1-\f{\sqrt{5}}{2}\phi^{-1}c-\f{1+\phi^{-2}}{2\sqrt{5}c}+O(\phi^{-p/2}).
	\end{align*}
	 The final result follows since $\phi^{-p/2}=O(n^{-1/2})$.
\end{proof}

We prove our second density result by using similar techniques.

\begin{proof}[Proof of Theorem~\ref{T-DenD}]
	Define $g(t)$ to be the number of pairs $(a,b)$ which have $1\le a\le b\le f_t$ and $af_t+bf_{t-1}\le n$, and define $h(t)$ to be the number of $(a,b)$ with 
	\begin{align}
	0\le a\le b\le \rec{\sqrt{5}}\phi^t, \nonumber\\ 
	a\phi^t+b\phi^{t-1}\le c\phi^p. \nonumber
	\end{align}
	By essentially the same proof as in Claim~\ref{Cl-Equiv}, we can show that $|g(t)-h(t)|=O(\phi^t)$.  We note that these two conditions are equivalent to
	\begin{align}
	0\le b\le\min\l\{\rec{\sqrt{5}}\phi^{t}, c\phi^{p-t+1}\r\}, \label{E-1G}\\ 
	0\le a\le \min\l\{b,c\phi^{p-t}-\phi^{-1}b\r\}\label{E-2G}.
	\end{align}
	
	If $t\le (p-1)/2$ then \eqref{E-1G} reduces to $b\le \rec{\sqrt{5}}\phi^t$ and \eqref{E-2G} reduces to $a\le b$.  We conclude for $2t+1\le p$ that any pair $(a,b)$ with $0\le a\le b\le \rec{\sqrt{5}}\phi^t$ is counted by $h(t)$, and hence \[h(t)=\rec{10}\phi^{2t}+O(\phi^t)\tr{ for any } t\le(p-1)/2.\]
	
	Now assume $2t-2\ge p$.  In this case \eqref{E-1G} reduces to $b\le c\phi^{p-t+1}$.  When $b< c\phi^{p-t-1}$, \eqref{E-2G} reduces to $0\le a\le b$, so in this case the number of valid choices we have is $\half c^2\phi^{2p-2t-2}+O(\phi^{p-t})$.  The number of choices for the case $b\ge c\phi^{p-t-1}$ is
	{\small\begin{align*}
	\sum_{b=c\phi^{p-t-1}}^{c\phi^{p-t+1}} c\phi^{p-t}-\phi^{-1}b&=(\phi-\phi^{-1})c^2\phi^{2p-2t}-\half\phi^{-1}(\phi^2-\phi^{-2})c^2\phi^{2p-2t}+O(\phi^{p-t})\\ 
	&=\l(1-\half \phi^{-1}(\phi^2-\phi^{-2})\r)c^2\phi^{2p-2t}+O(\phi^{p-t}),
	\end{align*}}
	where we used that $\phi-\phi^{-1}=1$.  Adding $\half c^2\phi^{2p-2t-2}+O(\phi^{p-t})$ to this quantity, and using that $1-\half \phi^{-1}(\phi^2-\phi^{-2})+\half \phi^{-2}=\half$, we find 
	\[h(t)=\half c^2 \phi^{2p-2t}+O(\phi^{p-t})\tr{ for any }t\ge (p+2)/2.\]
	
	It remains to deal with the cases $2t=p$ and $2t=p+1$.  First consider $2t=p$.  Note that in this range we have $c\phi^{p-p/2+1}>\rec{\sqrt{5}}\phi^{p/2}$, so \eqref{E-1G} reduces to $b\le \rec{\sqrt{5}}\phi^{p/2}$.  If $b\le c\phi^{p/2-1}$ then \eqref{E-2G} reduces to $0\le a\le b$ and the count will be $\half c^2\phi^{p-2}+O(\phi^{p/2})$.  For $b>c\phi^{p/2-1}$ the count will be \[\sum_{b=c\phi^{p/2-1}}^{\rec{\sqrt{5}}\phi^{p/2}}c\phi^{p/2}-\phi^{-1}b=\l(\rec{\sqrt{5}}c-\phi^{-1}c^2\r)\phi^p-\rec{10}\phi^{-1}\phi^p+\half \phi^{-3}c^2\phi^p+O(\phi^{p/2}).\]  
	Adding $\half c^2\phi^{p-2}+O(\phi^{p/2})$ to this and using $-\phi^{-1}+\half\phi^{-2}+\half \phi^{-3}=-\half\phi^{-1}$ gives
	\[
	h(p/2)=\l(-\half \phi^{-1}c^2+\rec{\sqrt{5}}c-\rec{10}\phi^{-1}\r)\phi^p+O(\phi^{p/2}).
	\]
	
	Finally, consider the case $2t=p+1$.  Again \eqref{E-1G} reduces to $b\le \rec{\sqrt{5}}\phi^{(p+1)/2}$.  If $b\le c\phi^{(p-3)/2}$ then \eqref{E-2G} reduces to $a\le b$, and we get a count of $\half c^2\phi^{-3}\phi^{p}+O(\phi^{p/2})$ from this.  Otherwise the count will be \[\sum_{b=c\phi^{(p-3)/2}}^{\rec{\sqrt{5}}\phi^{(p+1)/2}}c\phi^{(p-1)/2}-\phi^{-1}b=\l(\rec{\sqrt{5}}c-\phi^{-2}c^2\r)\phi^p-\rec{10}\phi^p+\half c^2\phi^{-4}\phi^p+O(\phi^{p/2}),\] so in total,
	\[
	h((p+1)/2)=\l(-\half \phi^{-2}c^2+\rec{\sqrt{5}}c-\rec{10}\r)\phi^p+O(\phi^{p/2}).
	\]

	By Theorem~\ref{T-Char}, we have $D(n)=\rec{n}\sum g(2t)$.  Let \[H_1=\sum_{\substack{t\le (p-1)/2\\ t\tr{ even}}} h(t),\ H_2=\sum_{\substack{t=p/2,(p+1)/2\\ t\tr{ even}}} h(t),\ H_3=\sum_{\substack{t\ge (p+2)/2\\ t\tr{ even}}} h(t).\] As in the proof of Theorem~\ref{T-Den2}, we find $D(n)=\rec{n}(H_1+H_2+H_3)+O(\phi^{-p/2})$.   Thus it will be enough to determine $H_1$, $H_2$, and $H_3$.  These values will depend on the value of $p\mod 4$. 
	
	First assume $p\equiv 0\mod 4$.  In this case we have 
	\[H_1=\sum_{ k\le p/4-1} h(2k)=\sum_{k\le p/4-1} \rec{10}(\phi^4)^k+O(\phi^{2k})=\f{\phi^{p}}{10(\phi^4-1)}+O(\phi^{p/2}),\] 
	\[H_2=h(p/2)=\l(-\half \phi^{-1}c^2+\rec{\sqrt{5}}c-\rec{10}\phi^{-1}\r)\phi^p+O(\phi^{p/2}),\]
	\[H_3=\sum_{k\ge p/4+1} h(2k)=\half c^2\phi^{2p}\sum_{k\ge p/4+1}(\phi^4)^{-k}+O(\phi^{p-2k})=\f{c^2\phi^p}{2(\phi^4-1)}+O(\phi^{p/2}).\]
	Dividing $H_1+H_2+H_3$ by $\rec{\sqrt{5}}c\phi^p$ gives in this case
	\begin{align*}
	D(n)&=\sqrt{5}c\l(-\half\phi^{-1}+\rec{2(\phi^{4}-1)}\r)+1+\rec{2\sqrt{5}c}\l(\rec{\phi^4-1}-\phi^{-1}\r)+O(\phi^{-p/2})\\&=1-\half c-\rec{10c}+O(\phi^{-p/2}).
	\end{align*}
	
	Now consider $p\equiv 2\mod 4$.  In this case $H_2=0$, the $H_1$ sum is now over $k\le (p-2)/4$, and the $H_3$ sum is now over $k\ge (p+2)/4$.  Effectively all this does compared to the previous case is scale $H_1$ and $H_2$ by $\phi^2$ and ignores the $H_2$ term.  Thus we have
	\[
	D(n)=\f{\sqrt{5}\phi^2c}{2(\phi^4-1)}+\f{\phi^2}{2\sqrt{5}(\phi^4-1)c}+O(\phi^{-p/2})=\half c+\rec{10 c}+O(\phi^{-p/2}).
	\]
	
	Now consider $p\equiv 3\mod 4$.  Here we have
	\[
	H_1=\sum_{k\le (p-3)/4} h(2k)=\f{\phi}{10(\phi^4-1)}\phi^p+O(\phi^{-p/2}),
	\]
	\[
	H_2=\l(-\half \phi^{-2}c^2+\rec{\sqrt{5}}c-\rec{10}\r)\phi^p+O(\phi^{-p/2}),
	\]
	\[
	H_3=\sum_{k\ge (p+5)/4} h(2k)=\f{c^2\phi^p}{2\phi(\phi^4-1)}+O(\phi^{-p/2}).
	\]
	
	We thus have
	{\small \begin{align*}
	D(n)&=\sqrt{5}c\phi^{-1}\l(-\half \phi^{-1}+\rec{2(\phi^4-1)}\r)+1+\f{\phi}{2\sqrt{5}c}\l(\f{1}{\phi^4-\phi^{-1}}-1\r)+O(\phi^{-p/2})\\&=1-\rec{2\phi}c-\f{\phi}{10c}+O(\phi^{-p/2}).
	\end{align*}}
	
	Finally the case $p\equiv 1\mod 4$ compared to the previous case has $H_1,H_3$ scaled by $\phi^2$ and $H_2=0$, so we end up with
	\[
	D(n)=\f{\sqrt{5} \phi c}{2(\phi^4-1)}+\f{\phi^3}{2\sqrt{5}(\phi^4-1)}+O(\phi^{-p/2})=\f{1}{2\phi }c+\f{\phi}{10c}+O(\phi^{-p/2}).
	\]
\end{proof}

\section{Paradoxical $n$}\label{S-Par}
Our key lemma for dealing with $d$-paradoxical $n$ will be the following.  Recall that $\del_n=\phi n-\floor{\phi n}$ and $\Del_n=\ceil{\phi n}-\phi n$.

\begin{lem}\label{L-abDel}
	Given $n$, let $a,b,t$ be as in Theorem~\ref{T-Char}.  If $t$ is even then
	\[
	\phi^{-t}(\phi b-a)=\del_n.
	\]
	
	If $t$ is odd then
	\[
	\phi^{-t}(\phi b-a)=\Del_n.
	\]
	
\end{lem}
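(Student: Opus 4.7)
The plan is to observe that Lemma \ref{L-abDel} is essentially a repackaging of calculations already carried out in the proof of Theorem \ref{T-Char}. The key ingredient is Lemma \ref{L-Form}(b), which tells us that
\[
w_{k+1}(b,a) = \phi\, w_k(b,a) + (-\phi)^{1-k}(a - \phi b).
\]
Setting $k = s = t+1$ and using $n = w_s(b,a)$ from Theorem \ref{T-Char}, this rearranges to
\[
\phi n - w_{s+1}(b,a) = (-\phi)^{-t}(\phi b - a).
\]
So the entire lemma reduces to tracking the sign of $(-\phi)^{-t}$ and invoking the statement of Theorem \ref{T-Char} which identifies $w_{s+1}(b,a)$ with either $\lfloor \phi n\rfloor$ or $\lceil \phi n\rceil$.

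First I would split into two cases. If $t$ is even, then $(-\phi)^{-t} = \phi^{-t}$, and Theorem \ref{T-Char} gives $w_{s+1}(b,a) = \lfloor \phi n\rfloor$, so the displayed equation becomes
\[
\phi n - \lfloor \phi n\rfloor = \phi^{-t}(\phi b - a),
\]
which is exactly $\del_n = \phi^{-t}(\phi b - a)$. If $t$ is odd, then $(-\phi)^{-t} = -\phi^{-t}$, and Theorem \ref{T-Char} gives $w_{s+1}(b,a) = \lceil \phi n\rceil$, so
\[
\phi n - \lceil \phi n\rceil = -\phi^{-t}(\phi b - a),
\]
which rearranges to $\Del_n = \lceil \phi n\rceil - \phi n = \phi^{-t}(\phi b - a)$.

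For rigor, I would also remark that the proof of Theorem \ref{T-Char} already verified $0 < \phi^{-t}(\phi b - a) < 1$ using the bounds $1 \le a \le b \le f_t$, which is exactly what is needed to guarantee that $\phi n - w_{s+1}(b,a)$ lies in $(-1,1)$ with the correct sign, so that $w_{s+1}(b,a)$ is indeed the floor or ceiling of $\phi n$ as claimed. There is no real obstacle here — the lemma is essentially a corollary of work already done, and the only care required is bookkeeping the sign of $(-\phi)^{-t}$ in the two parities of $t$.
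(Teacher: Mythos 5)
Your proposal is correct and follows exactly the paper's own argument: invoke Theorem~\ref{T-Char} to identify $w_{s+1}(b,a)$ with $\lfloor\phi n\rfloor$ or $\lceil\phi n\rceil$ according to the parity of $t$, and apply Lemma~\ref{L-Form}(b) with $k=s=t+1$ to equate $\phi n - w_{s+1}(b,a)$ with $(-\phi)^{-t}(\phi b-a)$. The sign bookkeeping in both parities is handled correctly, so there is nothing to add.
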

\begin{proof}
	Assume $t$ is even.  Then Theorem~\ref{T-Char} implies that $w_{t+2}(b,a)=\floor{\phi n}$.  By Lemma~\ref{L-Form}(b) we have
	\[
	\phi^{-t}(\phi b-a)=\phi n-\floor{\phi n}=\del_n.
	\]
	The proof for $t$ odd is essentially the same.
\end{proof}

With this we can show that $n$ will not be paradoxical whenever $\del_n$ or $\Del_n$ is below a certain threshold.

\begin{prop}\label{P-Thresh}
	There exist no $n$ which are $\rec{\sqrt{5}}\phi^{-1}$-paradoxical.  Equivalently, if $\del_n< \rec{\sqrt{5}}\phi^{-1}$, then $n\in D$, and if $\Del_n< \rec{\sqrt{5}}\phi^{-1}$, then $n\in U$.
\end{prop}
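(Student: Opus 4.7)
The plan is to argue by contrapositive and reuse the upper bound on $\phi^{-t}(\phi b-a)$ that was already established inside the proof of Theorem~\ref{T-Char}. Recall that by Theorem~\ref{T-Char}, the sets $D$ and $U$ partition $\{n\ge 2\}$ according to the parity of $t=t(n)$ (even $\Rightarrow n\in D$, odd $\Rightarrow n\in U$), and Lemma~\ref{L-abDel} identifies $\phi^{-t}(\phi b-a)$ with $\del_n$ when $t$ is even and with $\Del_n$ when $t$ is odd. So the whole proposition reduces to showing that $\phi^{-t}(\phi b-a)\le \rs\phi$ for the pair $(a,b)$ arising from Theorem~\ref{T-Char}, together with the basic fact that $\del_n+\Del_n=1$ (which holds since $\phi n\notin\Z$).

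For the upper bound, I would recycle the chain of inequalities from the last paragraph of the proof of Theorem~\ref{T-Char}: because $1\le a$ and $b\le f_t$ we have $\phi^{-t}(\phi b-a)\le \phi^{1-t}f_t-\phi^{-t}$, and substituting the closed form $f_t=\rs(\phi^t-(-\phi)^{-t})$ yields $\phi^{-t}(\phi b-a)\le \rs\phi$. The clean algebraic identity driving the whole proposition is $\phi+\phi^{-1}=\sqrt{5}$ (equivalently $\phi^2=\phi+1$), which says exactly that $\rs\phi=1-\rs\phi^{-1}$.

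Now suppose $\del_n<\rs\phi^{-1}$ and, for contradiction, that $t=t(n)$ is odd. By Lemma~\ref{L-abDel} together with the bound just mentioned, $\Del_n=\phi^{-t}(\phi b-a)\le \rs\phi$, so
\[
\del_n=1-\Del_n\ge 1-\rs\phi=\rs\phi^{-1},
\]
contradicting the hypothesis. The second implication follows by the symmetric argument: if $\Del_n<\rs\phi^{-1}$ and $t(n)$ were even, then $\del_n=\phi^{-t}(\phi b-a)\le \rs\phi$ by the same estimate, giving $\Del_n\ge \rs\phi^{-1}$.

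There is no real obstacle here beyond the identification of the right algebraic identity: the threshold $\rs\phi^{-1}$ is tight precisely because it is the complement (relative to $1$) of the maximum of $\phi^{-t}(\phi b-a)$ forced by the constraints $1\le a\le b\le f_t$, and this complement is governed by $\phi+\phi^{-1}=\sqrt{5}$.
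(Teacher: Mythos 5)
Your proposal is correct and follows essentially the same route as the paper: both rest on Lemma~\ref{L-abDel} together with the bound $\phi^{-t}(\phi b-a)\le \rec{\sqrt{5}}\phi$ coming from $1\le a\le b\le f_t$ and the closed form for $f_t$, and then conclude via $\del_n+\Del_n=1$ that the complementary quantity is at least $1-\rec{\sqrt{5}}\phi=\rec{\sqrt{5}}\phi^{-1}$. The only difference is cosmetic (you phrase it as a contrapositive/contradiction where the paper argues directly from $n\in U$).
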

\begin{proof}
	Let $n$ be such that $n\in U$ and let $a,b,t$ be as in Theorem~\ref{T-Char}.  $n\in U$ implies that $t$ is odd, and this together with the bounds $1\le a\le b\le f_t$ and Lemma~\ref{L-abDel} implies that  \[\Del_n=\phi^{-t}(\phi b-a)\le \rec{\sqrt{5}}\phi+\rec{\sqrt{5}}\phi^{-t}(\phi^{1-t}-1)\le \rec{\sqrt{5}}\phi.\]
	We conclude that $\del_n=1-\Del_n\ge 1-\rec{\sqrt{5}}\phi=\rec{\sqrt{5}}\phi^{-1}$ for all $n\in U$ as desired.
\end{proof} 

We note that Theorem~\ref{T-DenP} shows that this threshold is sharp.  More concretely, one can consider $n=f_t+f_tf_{t-1}$, which one can argue will be $(\rec{\sqrt{5}}\phi^{-1}+\ep)$-paradoxical for any $\ep>0$ when $t$ is sufficiently large.  With this proposition we can prove our density result for $d$-paradoxical $n$.

\begin{proof}
	If $d\le \rec{\sqrt{5}}\phi^{-1}$ then the result follows from Proposition~\ref{P-Thresh}, so assume this is not the case.  For notational convenience we define $r=1-d$, noting that by assumption $\half \le r\le 1-\rec{\sqrt{5}}\phi^{-1}=\rec{\sqrt{5}}\phi$.  Note that a given $n\in D$ will be $(1-r)$-paradoxical if and only if $1-r>\Del_n=1-\del_n$, and by Lemma~\ref{L-abDel} this is equivalent to having $\phi^{-t(n)}(\phi b(n)-a(n))>r$, and this same bound continues to holds if $n\in U$. With this in mind, we let $g(t)$ denote the number of pairs $(a,b)$ satisfying $1\le a\le b\le f_t,$  $af_t+bf_{t-1}\le n$, and $\phi b-a>r\phi^t$.  We define $h(t)$ to denote the number of pairs $(a,b)$ satisfying 
	\begin{align*}
	0\le a\le b&\le \rec{\sqrt{5}}\phi^t,\\ 
	a\phi^t+b\phi^{t-1}&\le c\phi^p,\\ 
	\phi b-a&\ge r\phi^t.
	\end{align*}
	As in our previous density arguments we find $|g(t)-h(t)|=O(\phi^t)$.  By considering the extremal value $a=0$ in the above inequalities, we find that $h(t)$ equivalently counts the number of pairs satisfying
	\begin{align}
	r\phi^{t-1}\le b&\le \min\l\{\rec{\sqrt{5}}\phi^t,c\phi^{p-t+1}\r\}, \label{D-Pb}\\ 
	0\le a&\le \min\{b,c\phi^{p-t}-\phi^{-1}b,\phi b-r\phi^t\} \label{D-Pa}.
	\end{align}
	Note that $b\le \phi b-r\phi^t$ is equivalent to having $b\ge r\phi^{t+1}>\rec{\sqrt{5}}\phi^t$ since $r\ge\half$, so \eqref{D-Pa} never reduces to $a\le b$ and we can ignore this case.
	
	First consider $2t\le p-1$.  In this case, as we argued in the proof of Theorem~\ref{T-DenD}, none of the bounds involving $c$ occur in \eqref{D-Pb} and \eqref{D-Pa}.  Further, note that $r\phi^{t-1}\le \rec{\sqrt{5}}\phi^t$ by our bounds on $r$, so there exist $b$ in the range of \eqref{D-Pb}.  We conclude that our count in this range will be
	\begin{align*}
		\sum_{b=r\phi^{t-1}}^{\rec{\sqrt{5}}\phi^t} \phi b-r\phi^t= \l(\rec{10}-\half\phi^{-2}r^2\r)\phi^{2t+1}-\l(\rec{\sqrt{5}}-\phi^{-1}r\r)r\phi^{2t}+O(\phi^t),
	\end{align*}
	and hence
	\begin{equation}
		h(t)=\l(\half \phi^{-1}r^2-\rec{\sqrt{5}}r+\rec{10}\phi\r)\phi^{2t}+O(\phi^t)\tr{ for }t\le (p-1)/2. \label{D-Pgen}
	\end{equation}
	
	Next consider $2t\ge p+3$.  In this case we have $c\phi^{p-2t+2}< r$ since $r\ge \half $ and $c\le \rec{\sqrt{5}}\phi$, and hence $c\phi^{p-t+1}< r\phi^{t-1}$.  We conclude that no $b$ satisfies \eqref{D-Pb}, and the same logic holds if $2t=p+2$ and $c<r$.  In these cases we have $h(t)=0$, so it only remains to deal with the cases $t=p/2,\ (p+1)/2$, and $p/2+1$ with $c\ge r$.  To this end we define $c_{2t-p}=c\phi^{p-t}$ and $r_{2t-p}=r\phi^t$.  We will freely switch between using this notation and writing these values out explicitly, depending on whether we are prioritizing using less space or being clearer with our bounds.
	
	First consider $t=p/2+1$ with $c\ge r$.  In this case (as we showed in the proof of Theorem~\ref{T-DenD}), the upper bound of \eqref{D-Pb} reduces to $c\phi^{p/2}$.  We have $\phi b-r\phi^{p/2+1}\le c\phi^{p/2-1}-\phi^{-1}b$ if and only if $b\le \rec{\sqrt{5}}(c\phi^{p/2-1}+r\phi^{p/2+1})=\rec{\sqrt{5}}(c_2+r_2)$, where we used $\phi+\phi^{-1}=\sqrt{5}$. Because $c\ge r$, we have \[r\phi^{p/2}=\phi^{-1}r_2\le \rec{\sqrt{5}}(c_2+r_2) \le \phi c_2=c\phi^{p/2},\] and hence there will exist $\rec{\sqrt{5}}(c_2+r_2)-\phi^{-1}r_2$ values of $b$ satisfying both \eqref{D-Pb} and $b\le \rec{\sqrt{5}}(c_2+r_2)$, and similarly there will be $\phi c_2-\rec{\sqrt{5}}(c_2+r_2)$ values of $b$ satisfying both \eqref{D-Pb} and $b>\rec{\sqrt{5}}(c_2+r_2)$ (this logic would fail if, say, $\rec{\sqrt{5}}(c_2+r_2)>\phi c_2$ because the quantity $\phi c_2-\rec{\sqrt{5}}(c_2+r_2)$ would be negative). In total then the count for this range will be  (again using $\phi+\phi^{-1}=\sqrt{5}$),
	\begin{align}
	\sum_{b=\phi^{-1}r_2}^{\rec{\sqrt{5}}(c_2+r_2)} &\phi b-r_2+\sum_{\rec{\sqrt{5}}(c_2+r_2)}^{\phi c_2} c_2-\phi^{-1}b\label{D-P1}\\ = &\rec{10}\phi (c_2+r_2)^2-\half \phi^{-1}r_2^2-\rec{\sqrt{5}} r_2c_2-\rec{\sqrt{5}} r_2^2+\phi^{-1}r_2^2\nonumber\\ &+\phi c_2^2-\rec{\sqrt{5}} c_2^2-\rec{\sqrt{5}} r_2c_2-\half \phi c_2^2+\rec{10}\phi^{-1}(c_2+r_2)^2+O(\phi^{p/2})\nonumber\\
	= &\rec{2\sqrt{5}} (c_2+r_2)^2-\rec{\sqrt{5}}(c_2^2+2r_2c_2+r_2^2)+\half \phi c_2^2+\half \phi^{-1}r_2^2+O(\phi^{p/2})\nonumber\\ 
	= &-\rec{2\sqrt{5}} (c_2+r_2)^2+\half\phi c_2^2+\half \phi^{-1}r_2^2+O(\phi^{p/2})\label{D-P2}.
	\end{align}
	Plugging in $c_2=c\phi^{p/2-1}$ and $r_2=r\phi^{p/2+1}$ into \eqref{D-P2} gives
	{\small\begin{align}
		h(p/2+1)&=\l( -\rec{2\sqrt{5}}+\half\phi\r) c^2\phi^{p-2}-\rec{\sqrt{5}}rc\phi^p+\l(-\rec{2\sqrt{5}}+\half\phi^{-1}\r) r^2\phi^{p+2}+O(\phi^{p/2})\nonumber\\ 
		&= \l(\rec{2\sqrt{5}} c^2-\rec{\sqrt{5}}rc+\rec{2\sqrt{5}} r^2\r)\phi^p+O(\phi^{p/2})\tr{ for }c\ge r, \label{D-P2B}
	\end{align}}
	where we used $-\rec{\sqrt{5}}\phi^{-2}+\phi^{-1}=-\rec{\sqrt{5}}\phi^2+\phi=\rec{\sqrt{5}}$.
	
	Next consider $t=(p+1)/2$.  In this case the upper bound of \eqref{D-Pb} reduces to $\rec{\sqrt{5}}\phi^{(p+1)/2}$.  As in the previous case, the bound of \eqref{D-Pa} is determined by whether or not we have $b\le \rec{\sqrt{5}}(c\phi^{(p-1)/2}+r\phi^{(p+1)/2})$.  Note that \[c\phi^{(p-1)/2}+r\phi^{(p+1)/2}-(\phi+\phi^{-1})r\phi^{(p-1)/2}=(c-\phi^{-1}r)\phi^{(p-1)/2},\] and this value is non-negative by our bounds on $c$ and $r$.  We conclude that $r\phi^{(p-1)/2}\le \rec{\sqrt{5}}(c\phi^{(p-1)/2}+r\phi^{(p+1)/2})$, and hence there will exist $b$ in the corresponding range.  If  $c> (1-r)\phi$, then  $\rec{\sqrt{5}}(c\phi^{(p-1)/2}+r\phi^{(p+1)/2})>\rec{\sqrt{5}}\phi^{(p+1)/2}$, so in this case the bound of \eqref{D-Pb} always reduces to $\phi b-r\phi^{(p+1)/2}$ and we have a count of 
	\begin{align*}
	\sum_{r\phi^{(p-1)/2}}^{\rec{\sqrt{5}}\phi^{(p+1)/2}}\phi b-r\phi^{(p+1)/2}=\rec{10}\phi^{p+2}-\half r^2\phi^p-\rec{\sqrt{5}}r\phi^{p+1}+r^2\phi^p+O(\phi^{p/2}), 
	\end{align*}
	and hence
	\begin{equation}
		h((p+1)/2)=\l(\half r^2-\f{1}{\sqrt{5}}\phi r+\rec{10}\phi^2\r)\phi^p+O(\phi^{p/2})\tr{ for }c>(1-r)\phi. \label{D-P1B}
	\end{equation}
	If $c\le (1-r)\phi$, then the count becomes almost the same as that of \eqref{D-P1} except $c_2,r_2$ are replaced with $c_1,r_1$ and the upper bound of the last sum becomes $\rec{\sqrt{5}}\phi^{(p+1)/2}$.  Effectively then the count becomes \eqref{D-P2} after replacing $c_2,r_2$ with $c_1,r_1$ and adding
	\begin{align*}
		\sum_{\phi c_1}^{\rec{\sqrt{5}}\phi^{(p+1)/2}}c_1-\phi^{-1}b&=\rs c_1\phi^{(p+1)/2}-\phi c_1^2-\rec{10} \phi^{p}+\half \phi c_1^2+O(\phi^{p/2}),
	\end{align*}
	so in total we have a count of
	\begin{align}
	-\rec{2\sqrt{5}} (c_1+r_1)^2+\rs c_1\phi^{(p+1)/2}+\half \phi^{-1}r_1^2-\rec{10} \phi^{p}+O(\phi^{p/2}).\label{D-P3}
	\end{align}
	
	Plugging in $c_1=c\phi^{(p-1)/2}$ and $r_1=r\phi^{(p+1)/2}$ gives
	{\small \begin{equation}
		h((p+1)/2)=\l(-\rec{2\sqrt{5}}\phi^{-1}c^2+\f{1-r}{\sqrt{5}}c+\rec{2\sqrt{5}}\phi^{-1}r^2-\rec{10}\r)\phi^p\tr{ for }c\le (1-r)\phi, \label{D-P1S}
	\end{equation}}
	where we used $-\rec{2\sqrt{5}}\phi+\half =\rec{2\sqrt{5}}\phi^{-1}$ to get the coefficient for $r^2$.
	
	Finally, consider $t=p/2$.  In this case the upper bound of \eqref{D-Pb} reduces to $\rec{\sqrt{5}}\phi^{p/2}$.  The bound of \eqref{D-Pa} is determined by whether or not we have $b\le \rec{\sqrt{5}}(c+r)\phi^{p/2}$.  As before we find that this quantity is always at least $r\phi^{p/2-1}$.  If $c>1-r$, then this cutoff value is larger than $\rec{\sqrt{5}}\phi^{p/2}$, so \eqref{D-Pa} always reduces to $\phi b-r\phi^{p/2}$ and we get a count of
	\begin{align*}
	\sum_{r\phi^{p/2-1}}^{\rec{\sqrt{5}}\phi^{p/2}} \phi b-r\phi^{p/2}=\rec{10}\phi^{p+1}-\half r^2\phi^{p-1}-\rec{\sqrt{5}}r\phi^p+r^2\phi^{p-1}+O(\phi^{p/2}),
	\end{align*}
	so
	\begin{equation}
		h(p/2)=\l(\half \phi^{-1}r^2-\rec{\sqrt{5}}r+\rec{10}\phi\r)\phi^p+O(\phi^{p/2})\tr{ for }c>1-r. \label{D-P0B}
	\end{equation}
	If instead $c\le 1-r$, then effectively in the same way we derived \eqref{D-P3}, we find our count to be
	\begin{align*}
	-\rec{2\sqrt{5}} (c_0+r_0)^2+\rs c_0\phi^{p/2}+\half \phi^{-1}r_0^2-\rec{10} \phi^{p-1}+O(\phi^{p/2}),
	\end{align*}
	and by plugging in $c_0=c\phi^{p/2}$ and $r_0=r\phi^{p/2}$ we find
	\begin{equation}
		h(p/2)=\l(-\rec{2\sqrt{5}}c^2+\f{1-r}{\sqrt{5}}c+\rec{2\sqrt{5}}\phi^{-2}r^2-\rec{10}\phi^{-1}\r)\phi^p\tr{ for }c\le 1-r, \label{D-P0S}
	\end{equation}
	where we used $-\rec{2\sqrt{5}}+\half\phi^{-1} =\rec{2\sqrt{5}}\phi^{-2}$.
	
	Let $H(n,d)=\rec{n}\sum h(t)$.  As we have argued before, $P(n,d)$ is within $O(\phi^p)$ of $H(n,d)$.  Recall that $h(t)=0$ for $t\ge (p+3)/2$.  First consider $p$ odd, in which case we have by \eqref{D-Pgen}
	\begin{align}
		\sum_{t\le (p-1)/2} h(t)=\l(\half \phi^{-1}r^2-\rec{\sqrt{5}}r+\rec{10}\phi\r)\sum_{t\le(p-1)/2}\phi^{2t}+O(\phi^{p/2})\nonumber\\ 
		=\l(\half \phi^{-1}r^2-\rec{\sqrt{5}}r+\rec{10}\phi\r)\phi^p+O(\phi^{p/2})\label{D-Podd},
	\end{align}
	where we implicitly used $\phi^2-1=\phi$.  If $c>(1-r)\phi$ we add \eqref{D-P1B} (which is effectively just $\phi$ times \eqref{D-Podd}) to \eqref{D-Podd}, use that $1+\phi=\phi^2$, and divide by $\rec{\sqrt{5}}c\phi^p$ to get
	\[
		H(n,d)=\l(\f{\sqrt{5}}{2} \phi r^2-\phi^2 r+\rec{2\sqrt{5}}\phi^3\r)c^{-1}+O(\phi^{-p/2}).
	\]
	If $c\le (1-r)\phi$, then we add \eqref{D-P1S} to \eqref{D-Podd} and get
	\[
		H(n,d)=-\half \phi^{-1}c+(1-r)+\l( r^2-r+\rec{2\sqrt{5}}\phi^{-1}\r)c^{-1}+O(\phi^{-p/2}).
	\]
	
	Now assume $p$ is even, in which case we have
	\begin{align}
	\sum_{t\le (p-2)/2} h(t)=\l(\half \phi^{-1}r^2-\rec{\sqrt{5}}r+\rec{10}\phi\r)\sum_{t\le(p-2)/2}\phi^{2t}+O(\phi^{p/2})\nonumber\\ 
	=\l(\half \phi^{-2}r^2-\rec{\sqrt{5}}\phi^{-1}r+\rec{10}\r)\phi^p+O(\phi^{p/2}).\label{D-Peven}
	\end{align}
	
	If $c\le 1-r\le r$, then $h(p/2+1)=0$ and we only need to add \eqref{D-P0S} to find
	\[
		H(n,d)= -\half c+(1-r)+\l( \phi^{-1}r^2-\phi^{-1}r+\rec{2\sqrt{5}}\phi^{-2}\r)c^{-1}+O(\phi^{-p/2}).
	\]
	
	If $1-r\le c\le r$, then we add \eqref{D-P0B} to \eqref{D-Peven} to find
	\begin{equation}
		H(n,d)=\l(\f{\sqrt{5}}{2} r^2-\phi r+\rec{2\sqrt{5}}\phi^2\r)c^{-1}+O(\phi^{-p/2}). \label{D-PTemp}
	\end{equation}
	
	Finally, if $c\ge r\ge 1-r$ we add \eqref{D-P2B} divided by $\rec{\sqrt{5}}c\phi^p$ to \eqref{D-PTemp} to find
	\[
		H(n,d)=\f{1}{2}c-r+\l(\phi r^2-\phi r+\rec{2\sqrt{5}}\phi^2\r)c^{-1}+O(\phi^{-p/2}).
	\]
	
	Plugging in $r=1-d$ gives us the desired result by observing that, for example, $r^2-r=d^2-d$ and \[\f{\sqrt{5}}{2}r^2-\phi r+\f{2}{\sqrt{5}}\phi^2=\f{\sqrt{5}}{2}\l(r-\f{1}{\sqrt{5}}\phi\r)^2=\f{\sqrt{5}}{2}\l(d-\f{1}{\sqrt{5}}\phi^{-1}\r)^2.\]
\end{proof}


We conclude this section by showing that $n$ with two $n$-good pairs always behave  ``as expected.''

\begin{prop}\label{P-Double}
	Let $n\ge 2$ be such that there exists two $n$-good pairs.  Then $n$ is not $\half$-paradoxical. That is, $n\in D$ if and only if $N(\phi n)=\floor{\phi n}$.
\end{prop}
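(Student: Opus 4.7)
The plan is to combine Theorem~\ref{T-Char} with Lemma~\ref{L-abDel} and reduce the claim to a one-line inequality on the canonical parameters. Suppose $n$ admits two $n$-good pairs. By Theorem~\ref{T-Char}, the canonical pair $(b,a)$ satisfies $f_{t-1}+1\le a\le b\le f_t$ for some $t=t(n)\ge 3$ (the case $t=2$ is impossible since it would force $a\le 1<f_1+1$). Moreover, $n\in D$ precisely when $t$ is even, and $n\in U$ precisely when $t$ is odd. By Lemma~\ref{L-abDel}, in either parity $\phi^{-t}(\phi b-a)$ equals whichever of $\del_n,\Del_n$ is the ``small'' fractional part, namely $\del_n$ when $t$ is even and $\Del_n$ when $t$ is odd. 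Consequently, showing $n$ is not $\half$-paradoxical reduces in both cases to the single estimate $\phi b-a\le \half\phi^t$.

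The extremal case $b=f_t,\ a=f_{t-1}+1$ gives $\phi b-a\le \phi f_t-f_{t-1}-1$. A short calculation using the identity $\phi f_t=f_{t+1}-(-\phi)^{-t}$ (immediate from the closed form) together with $f_{t+1}-f_{t-1}=f_t$ simplifies this to
\[
\phi f_t-f_{t-1}-1=\rec{\sqrt{5}}\phi^t-1-\l(1+\rec{\sqrt{5}}\r)(-\phi)^{-t}.
\]
Since $\rec{\sqrt{5}}<\half$, the leading term is already strictly less than $\half\phi^t$, so I only need to check that the perturbations $-1$ and $\mp(1+\rec{\sqrt{5}})\phi^{-t}$ do not ruin the inequality; this is a finite check that is trivial for $t$ even and an easy arithmetic verification for $t$ odd starting at $t=3$.

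To finish, I would observe that since $\phi$ is irrational, $\phi n$ is never a half-integer, so the above inequality is automatically strict and never puts us at the ambiguous $\del_n=\half$ value. Hence when $t$ is even we obtain $\del_n<\half$, giving $N(\phi n)=\floor{\phi n}$ together with $n\in D$; and when $t$ is odd we obtain $\Del_n<\half$, giving $N(\phi n)=\ceil{\phi n}$ together with $n\in U$. This simultaneously shows that $n$ is not $\half$-paradoxical and verifies the claimed equivalence $n\in D\iff N(\phi n)=\floor{\phi n}$.

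The main ``obstacle'' is really just bookkeeping around parity and the small-$t$ check of the key inequality; the heavy lifting has already been done by Theorem~\ref{T-Char} and Lemma~\ref{L-abDel}, which together reduce the proposition to an essentially one-line estimate.
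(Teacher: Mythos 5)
Your proposal is correct and follows essentially the same route as the paper: both use Theorem~\ref{T-Char} to get $a\ge f_{t-1}+1$ from the existence of two $n$-good pairs, then apply Lemma~\ref{L-abDel} and bound $\phi^{-t}(\phi b-a)\le\phi^{-t}(\phi f_t-f_{t-1}-1)<\tfrac12$. Your exact simplification via $\phi f_t=f_{t+1}-(-\phi)^{-t}$ is a slightly cleaner way to carry out the same estimate the paper does with the closed form for $f_k$.
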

\begin{proof}
	Let $a,b,t$ be as in Theorem~\ref{T-Char}, noting that our assumption on $n$ implies that $a>f_{t-1}$.  Thus if $t\ge 2$ is even, we have by Lemma~\ref{L-abDel}
	\[
	\del_n\le \phi^{-t}(\phi f_t-f_{t-1}-1)\le \rec{\sqrt{5}}(\phi-\phi^{-1}+2\phi^{1-2t}-\phi^{-t})\le \rec{\sqrt{5}}(\phi-\phi^{-1}+.1)<.5.
	\]
	Because $\del_n<.5$, we conclude that $N(\phi n)=\floor{\phi n}$.  Similarly we find that $t$ odd implies $N(\phi n)=\ceil{\phi n}$.  As $n\in D$ if and only if $t$ is even, we conclude the result.
\end{proof}
\section{Gap Sizes}\label{S-Gaps}
In this section we prove Theorems~\ref{T-Gap} and \ref{T-Gap2}, and to this end we will need some additional results concerning $\del_n$ and $\Del_n$.

\begin{lem}\label{L-Add}
	For any $n,\ell\ge 0$, we have
	\begin{align*}
	\del_{n+\ell}&=\del_n+\del_\ell-\floor{\del_n+\del_\ell},\\ 
	\Del_{n+\ell}&=\Del_n+\Del_{\ell}+\ceil{-\Del_n-\Del_{\ell}}.
	\end{align*}
\end{lem}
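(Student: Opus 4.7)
The plan is to prove both identities by direct algebraic manipulation, exploiting the fact that subtracting an integer from $x$ shifts $\lfloor x\rfloor$ and $\lceil x\rceil$ by that same integer.

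For the first identity, I would start by writing $\phi n=\lfloor\phi n\rfloor+\delta_n$ and $\phi\ell=\lfloor\phi\ell\rfloor+\delta_\ell$ from the definition of $\delta$. Adding these gives
\[
\phi(n+\ell)=\bigl(\lfloor\phi n\rfloor+\lfloor\phi\ell\rfloor\bigr)+(\delta_n+\delta_\ell).
\]
Since the parenthesized quantity is an integer and $\delta_n+\delta_\ell\in[0,2)$, applying $\lfloor\cdot\rfloor$ yields
\[
\lfloor\phi(n+\ell)\rfloor=\lfloor\phi n\rfloor+\lfloor\phi\ell\rfloor+\lfloor\delta_n+\delta_\ell\rfloor.
\]
Subtracting this from $\phi(n+\ell)$ and using the definition $\delta_{n+\ell}=\phi(n+\ell)-\lfloor\phi(n+\ell)\rfloor$ gives exactly $\delta_{n+\ell}=\delta_n+\delta_\ell-\lfloor\delta_n+\delta_\ell\rfloor$, as desired.

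For the second identity I would proceed in the same spirit, writing $\phi n=\lceil\phi n\rceil-\Delta_n$ and $\phi\ell=\lceil\phi\ell\rceil-\Delta_\ell$, so that
\[
\phi(n+\ell)=\bigl(\lceil\phi n\rceil+\lceil\phi\ell\rceil\bigr)+(-\Delta_n-\Delta_\ell).
\]
Again the parenthesized term is an integer, and $-\Delta_n-\Delta_\ell\in(-2,0]$, so applying $\lceil\cdot\rceil$ and using $\lceil m+x\rceil=m+\lceil x\rceil$ for integers $m$ yields
\[
\lceil\phi(n+\ell)\rceil=\lceil\phi n\rceil+\lceil\phi\ell\rceil+\lceil-\Delta_n-\Delta_\ell\rceil.
\]
Subtracting $\phi(n+\ell)$ from both sides and regrouping gives $\Delta_{n+\ell}=\Delta_n+\Delta_\ell+\lceil-\Delta_n-\Delta_\ell\rceil$.

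There is no real obstacle here, the only subtle point being to note that the identity $\lfloor m+x\rfloor=m+\lfloor x\rfloor$ (and similarly for ceilings) requires $m$ to be an integer, which is guaranteed because $\lfloor\phi n\rfloor,\lfloor\phi\ell\rfloor\in\mathbb{Z}$. The edge case $n=0$ or $\ell=0$ is handled uniformly since $\delta_0=\Delta_0=0$, making both identities trivially true in that case.
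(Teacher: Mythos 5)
Your proof is correct and follows essentially the same route as the paper: write $\phi n=\lfloor\phi n\rfloor+\delta_n$, pull the integer parts out of the floor (resp.\ ceiling), and subtract. The only difference is presentational — the paper does it in one chain of equalities — and your handling of the ceiling case and the $n=0$ or $\ell=0$ edge case is fine.
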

\begin{proof}
	By definition, \begin{align*}\del_{n+\ell}&=\phi (n+\ell)-\floor{\phi n+\phi\ell}\\ &=\phi(n+\ell)-\floor{\floor{n}+\del_n+\floor{\ell}+\del_\ell}\nonumber\\ &=\phi n+\phi \ell-\floor{n}-\floor{\ell}-\floor{\del_n+\del_\ell}\nonumber \\ &=\del_n+\del_\ell-\floor{\del_n+\del_\ell}.\label{E-Del}\end{align*}
	
	The proof for $\Del_{n+\ell}$ is essentially the same.
\end{proof}

\begin{lem}\label{L-ConFloor}
	For every $n$ there exists $1\le \ell\le 5$ such that $\del_{n+\ell}<\rec{\sqrt{5}}\phi^{-1}$, and one can choose $\ell\le 3$ unless $\del_n\in [0,.15)\cup (.65,.77)$. Moreover, there exists $1\le \ell<\ell'\le 6$ such that $\del_{n+\ell},\del_{n+\ell'}<\rec{\sqrt{5}}\phi^{-1}$ unless $\del_n\in[0,.3)\cup(.65,1]$.
	
	For every $n$ there exists $1\le \ell\le 5$ such that $\Del_{n+\ell}<\rec{\sqrt{5}}\phi^{-1}$, and one can choose $\ell\le 3$ unless $\Del_n\in (.13,.24)\cup (.51,.62)$. Moreover, there exists $1\le \ell<\ell'\le 6$ such that $\Del_{n+\ell},\Del_{n+\ell'}<\rec{\sqrt{5}}\phi^{-1}$ unless $\Del_n\in [0,.24)\cup (.36,.62)\cup (.97,1]$.
\end{lem}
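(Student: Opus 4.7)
The plan is to use Lemma~\ref{L-Add} to reduce both statements to a finite case analysis on $[0,1)$. Since $\phi$ is irrational, $\phi n$ and $\phi\ell$ are never integers for $n,\ell\ge 1$, so $\del_n,\del_\ell\in(0,1)$ and Lemma~\ref{L-Add} yields $\del_{n+\ell}=\{\del_n+\del_\ell\}$, where $\{x\}$ denotes the fractional part. Similarly, since $\Del_n=1-\del_n$ for $n\ge 1$, the second identity of Lemma~\ref{L-Add} gives $\Del_{n+\ell}=\{\Del_n+\Del_\ell\}$. Thus the problem reduces to understanding, for each $\ell$, the set of $x\in[0,1)$ with $\{x+\del_\ell\}<T$, where $T:=\rec{\sqrt{5}}\phi^{-1}=(5-\sqrt{5})/10\approx .276$.

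The next step is to compute $\del_\ell$ for $\ell=1,\ldots,6$ explicitly: $\del_1=\phi^{-1}\approx .618$, $\del_2=\sqrt{5}-2\approx .236$, $\del_3=(3\sqrt{5}-5)/2\approx .854$, $\del_4=2\sqrt{5}-4\approx .472$, $\del_5=(5\sqrt{5}-11)/2\approx .090$, and $\del_6=3\sqrt{5}-6\approx .708$. For each such $\ell$, the set $I_\ell:=\{x\in[0,1):\{x+\del_\ell\}<T\}$ is the interval $[1-\del_\ell,1-\del_\ell+T)\pmod 1$. For example, $I_1\approx[.382,.658)$, $I_2\approx[.764,1)\cup[0,.040)$, $I_3\approx[.146,.422)$, $I_4\approx[.528,.804)$, $I_5\approx[.910,1)\cup[0,.186)$, $I_6\approx[.292,.568)$.

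For the first claim, one verifies $I_1\cup\cdots\cup I_5=[0,1)$ by listing the intervals in order and observing that each overlaps the next. For the $\ell\le 3$ refinement, one checks that $[0,1)\setminus(I_1\cup I_2\cup I_3)$ consists of two short intervals with irrational endpoints, both contained in $[0,.15)\cup(.65,.77)$ (the relevant endpoints $T-\del_2=(25-11\sqrt{5})/10\approx .040$ and $1-\del_3=(7-3\sqrt{5})/2\approx .146$ can be written exactly to confirm the containment). For the claim on two valid values of $\ell$, define the multiplicity function $m(x)=|\{1\le\ell\le 6:x\in I_\ell\}|$. A direct case analysis shows that $\{x:m(x)<2\}$ is the union of four small subintervals, all contained in $[0,.3)\cup(.65,1]$.

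The argument for the three $\Del$ statements is structurally identical: one uses the values $\Del_\ell=1-\del_\ell$ in place of $\del_\ell$ and repeats the same analysis with intervals $J_\ell=\{y\in[0,1):\{y+\Del_\ell\}<T\}$; the computations yield $J_3\approx[0,.130)\cup[.854,1)$, $J_5\approx[.090,.366)$, $J_2\approx[.236,.512)$, $J_4\approx[.472,.748)$, $J_1\approx[.618,.894)$, $J_6\approx[.708,.984)$, and the same three verifications (full coverage, $\ell\le 3$ complement, double coverage) give the stated ``bad'' sets. The main obstacle is purely bookkeeping: with six intervals in play and irrational endpoints, care is needed to verify each of the stated numerical containments is strict enough, but no step is conceptually deep.
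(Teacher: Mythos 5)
Your proposal is correct and follows essentially the same route as the paper: both reduce the claims via Lemma~\ref{L-Add} to checking which of the intervals $I_\ell=\{x:\{x+\del_\ell\}<\rec{\sqrt{5}}\phi^{-1}\}$ (and the analogous $J_\ell$ for $\Del$) cover a given point of $[0,1)$, and your computed intervals agree with the paper's. The remaining finite bookkeeping (full coverage by $\ell\le 5$, the complement of $I_1\cup I_2\cup I_3$, and the multiplicity-$\ge 2$ set) matches what the paper does.
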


\begin{proof}
	Define $I_\ell:=\{x\in[0,1]:x+\del_\ell-\floor{x+\del_\ell}<\rec{\sqrt{5}}\phi^{-1}\}$.  Equivalently, one can view this as the set of $x$ such that there exists some $k\in \Z$ such that $k-\del_\ell\le x<k+\rec{\sqrt{5}}\phi^{-1}-\del_\ell$.  Computing these values we find
	\begin{align*}
	I_1&=[.38\ldots,.65\ldots),\ I_2=[0,.04\ldots)\cup [.76\ldots,1],\ I_3=[.14\ldots,.42\ldots),\\  I_4&=[.52\ldots,.80\ldots),\ I_5=[0,.18\ldots)\cup (.90\ldots,1],\ I_6=(.29\ldots,.56\ldots).
	\end{align*}
	One can verify that for every $x\in[0,1]$ there exists some $\ell\le 5$ such that $x\in I_\ell$, and moreover that one can take $\ell\le 3$ provided $\del_n\notin [0,.15)\cup (.65,.77)$.  Note that we can make these intervals slightly smaller, but we have no need to do so.  If $x=\del_n$, then  Lemma~\ref{L-Add} implies that $\del_{n+\ell}<\rec{\sqrt{5}}\phi^{-1}$ for this choice of $\ell$.  Further, given any $x\notin [0,.29)\cup (.65,.91)$, one can find  $1\le \ell<\ell'\le 6$ such that $x\in I_\ell,I_{\ell'}$, and by considering $x=\del_n$ we conclude the first part of the lemma.
	
	For the second part, define $J_\ell=\{x\in[0,1]:x+\Del_\ell+\ceil{-x-\Del_\ell}\}$.  We find
	\begin{align*}
	J_1&=(.61,\ldots,.89\ldots),\ J_2=(.23\ldots,.51\ldots),\ J_3=[0,.13\ldots)\cup (.85\ldots,1],\\ 
	J_4&=(.47\ldots,.74\ldots),\ J_5=(.09\ldots,.36\ldots),\ J_6=(.70\ldots,.98\ldots).
	\end{align*}
	With this one can argue essentially as before to conclude the results concerning $\Del_n$.
\end{proof}

We are now just about ready to prove Theorem~\ref{T-Gap}. Our proof will utilize  d'Ocagne's identity \cite{V,W},
\[
f_{m+1}f_{n}-f_{m}f_{n+1}=(-1)^{m}f_{n-m},
\]
which is a generalization of Cassini's identity.
We will also make use of the inequality
\begin{equation} f_t\le (1+\phi^{-8})\rec{\sqrt{5}}\phi^t\le .46\phi^t,\label{E-In}\end{equation}
which is valid for $t\ge 4$.
\begin{proof}[Proof of Theorem~\ref{T-Gap}]
	We demonstrated before the statement of the theorem that each of these gap sizes can occur, so it remains to show that these are the only values that occur.  We first consider $d_{k+1}-d_k$ and let $n=d_k$.  One can verify that the statement is true for $n\le f_6f_5=40$, so we can assume $n>40$, and hence $t(m)\ge 6$ for all $m\ge n$ by Lemma~\ref{L-Chicken}.
	
	By Lemma~\ref{L-ConFloor}, there exists some $1\le \ell\le 5$ such that $\del_{n+\ell}<\rec{\sqrt{5}}\phi^{-1}$. This implies that $n+\ell\in D$ by Proposition~\ref{P-Thresh}, and hence \[d_{k+1}-d_k\le n+\ell-n=\ell\le 5.\]  It remains to show that this difference is not 4. Assume for the sake of contradiction that $d_{k+1}=n+4$, which implies that $t(n+4)$ is even and that $t(n+\ell)$ is odd for all $1\le \ell\le 3$.  By Lemma~\ref{L-ConFloor} we must have that $\del_{n}\in [0,.15)\cup (.65,.77)$.  
	
	First consider $\del_n\in[0,.15)$, which by Lemma~\ref{L-Add} implies $\del_{n+4}\in(.47,.63)$.  Let $a=a(n+4),b=b(n+4),t=t(n+4)$ be as in Theorem~\ref{T-Char}, and recall that we are assuming $t$ to be even and at least 6.  We claim that $b-a> f_{t-2}$.  Indeed, if instead $a\ge b-f_{t-2}$, then by our bounds on $\del_{n+4}$, Lemma~\ref{L-abDel}, and \eqref{E-In}, we would have
	\begin{align*}
	.47\phi^t&\le \del_n \phi^t=\phi b-a\le \phi b-b+ f_{t-2}=\phi^{-1}b+f_{t-2}\\ &\le \phi^{-1}f_t+ f_{t-2}\le .46(\phi^{t-1}+\phi^{t-2})=.46\phi^t,
	\end{align*}
	a contradiction.

	By d'Ocagne's identity with $n=t-1,m=t-4$ and the assumption that $t$ is even, we find
	\begin{align*}
	n+2=(n+4)-2&=af_t+bf_{t-1}+f_{t-4}f_t-f_{t-3}f_{t-1}\\ &=(a+f_{t-4})f_t+(b-f_{t-3})f_{t-1}.
	\end{align*}
	With this in mind, define $a':=a+f_{t-4}$ and $b':=b-f_{t-3}$.  Note that $a'\ge a\ge 1$ and $b'\le b\le f_t$.  Moreover, \[b'-a'=b-f_{t-3}-a-f_{t-4}=b-a-f_{t-2}> 0.\] We conclude by the uniqueness of the integers of Theorem~\ref{T-Char} that $a(n+2)=a',$ $b(n+2)=b'$, and $t(n+2)=t$.  In particular, $t(n+2)$ is even, a contradiction to our assumption that $n+2\notin D$.
	
	Now assume $\del_n\in (.65,.77)$.  Let $a=a(n),b=b(n),t=t(n)$, and recall that $n=d_k$ implies that $t$ is even.  We claim that $b-a> f_{t-1}$.  If this were not the case, then as before we find
	\[
	.65\phi^t\le \phi b-a\le \phi^{-1}f_t+f_{t-1}\le .46\cdot 2 \phi^{t-1}\le .57\phi^t,
	\]
	a contradiction. By Cassini's identity and the assumption that $t$ is even, we find
	\[
	n+1=af_t+bf_{t-1}+f_{t-3}f_t-f_{t-2}f_{t}=(a+f_{t-3})f_t+(b-f_{t-2}):=a'f_t+b'f_{t-1}.
	\]
	Again $a'\ge a\ge 1,b'\le b\le f_t$, and $b'-a'=b-a-f_{t-1}\ge 0$, so we conclude that $t(n+1)=t$, a contradiction to our assumption that $n+1\notin D$.
	
	The proof for the case $u_{k+1}-u_k$ is essentially the same, so we only sketch the details.  One can show that $u_{k+1}-u_k\le 5$ exactly as we did for $d_{k+1}-d_k$. Let $n=u_k$, and again we can verify the statement for $n\le 40$.  Assume $n+4=u_{k+1}$.  Lemma~\ref{L-ConFloor} shows that this implies $\Del_n\in (.13,.24)\cup (.51,.62)$.
	
	If $\Del_n\in (.13,.24)$, then Lemma~\ref{L-Add} implies that $\Del_{n+4}\in(.65,.77)$.  Let $a=a(n+4),b=b(n+4),t=t(n+4)$.  By noting that $t$ is odd and using essentially the same argument as in the case $\del_n\in(.65,.77)$ (using the exact same numerical bounds), we find that $b-a\ge f_{t-1}$.  Because $t$ is odd we have by Cassini's identity
	\[
		n+3=af_t+bf_{t-1}+f_{t-3}f_t-f_{t-2}f_{t-1}=(a+f_{t-3})f_t+(b-f_{t-2})f_t.
	\]
	As before we find $t(n+3)=t$, a contradiction.
	
	If $\Del_n\in (.51,.62)$, let $a=a(n),b=b(n),t=t(n)$.  Doing the exact same computations as in the case $\del_{n}\in[0,.15)$, we find $b-a\ge f_{t-2}$, and hence by d'Ocagne's identity and the assumption that $t$ is odd we have
	\[
		n+2=(a+f_{t-3})f_t+(b-f_{t-4})f_{t-1},
	\]
	and from this we conclude $t(n+2)=t$, a contradiction.
\end{proof}

We prove Theorem~\ref{T-Gap2} using similar ideas.
\begin{proof}[Proof of Theorem~\ref{T-Gap2}]
	Let $S=\{2,3,4,5,6,8,10\}$, $D_2=\{d_{k+2}-d_k\}$, and $U_2=\{u_{k+2}-u_k\}$.  We first show that $S\sub D_2,U_2$.  One can see from \eqref{D1} that $2,3,4,5,8\in D_2$ (using $48-46,\ 10-7,\ 9-5,\ 7-2,\ 23-15$), and one can verify that $d_{53}-d_{51}=102-96=6$ and $d_{374}-d_{372}=756-746=10$.  Similarly from \eqref{U1} one sees that $2,3,4,5,6,8\in U_2$ (using $21-19,\ 6-3,\ 8-4,\ 11-6,\ 14-8,\ 50-42$), and one can verify that $u_{961}-u_{959}=1927-1917=10$.  It remains to show $D_2,U_2\sub S$.  
	
	Note that we can not have, for example, \[9=d_{k+2}-d_k=(d_{k+2}-d_{k+1})+(d_{k+1}-d_k)\] without one of these differences being 4 or larger than 5, so by Theorem~\ref{T-Gap} we conclude that $9\notin D_2,U_2$.  Similarly $m\notin D_2,U_2$ for any $m>10$. Trivially $1\notin D_2,U_2$, so in order to show $D_2,U_2\sub S$ it remains to prove $7\notin D_2,U_2$.
	
	Let $n=d_k$.  One can verify that the statement is true up to $n\le f_8f_7=273$, so we can assume $n>273$, and hence $t(m)\ge 8$ for all $m\ge n$ by Lemma~\ref{L-Chicken}.  Note that we have $d_{k+2}-d_k\le 6$ whenever $\del_n\notin [0,.29)\cup (.65,1]$ by Lemma~\ref{L-ConFloor}, so it remains to deal with these cases.  As we will see, most of these cases can be solved automatically by our previous propositions and theorems.  
	
	If $\del_n\in(.73,1]\sub (1-\rec{\sqrt{5}}\phi^{-1},1]$, then $n\in U$ by Proposition~\ref{P-Thresh}, which contradicts the assumption $n=d_k\in D$.  If $\del_n\in (.65,.74)$, then one can verify that $\del_{n+4}<\rec{\sqrt{5}}\phi^{-1}$ (in essentially the same way by which we constructed $I_4$ in Lemma~\ref{L-ConFloor}), so $n+4\in D$.  But $n+4\ne d_{k+1}$ by Theorem~\ref{T-Gap}, so $d_{k+2}\le n+4$ and we conclude the result in this case.  Similarly, if $\del_n\in (.14,.29)$, then $\del_{n+3}<\rec{\sqrt{5}}\phi^{-1}$ and we have $n+3\in D$.  If $d_{k+2}\le n+3$ then we are done, and otherwise $d_{k+1}=n+3$ and we can not have $d_{k+2}=d_k+7=d_{k+1}+4$ by Theorem~\ref{T-Gap}.  If $\del_n\in[0,.04)$, then $\del_{n+2},\del_{n+5}<\rec{\sqrt{5}}\phi^{-1}$, so $d_{k+2}\le n+5$ in this case.
	
	The final case to consider is $\del_n\in (.039,.15)$.  Assume that in this range we have $d_{k+2}=n+7$, and let $a=a(n+7),b=b(n+7),t=(n+7)$.  Note that in this range we have $\del_{n+7}\in(.36,.48)$.  We claim that $b-a>f_{t-4}$.  Indeed if this were not the case, then by \eqref{E-In} (and the assumption $t-4\ge 4$), we would have
	\[
	.36\phi^t\le \phi^{-1}b+f_{t-4}\le .46(\phi^{t-1}+\phi^{t-4})\le .35\phi^t,
	\]
	a contradiction.  By d'Ocagne's identity with $m=t-6,n=t-1$ and $t$ even, we find
	\[
	n+2=af_t+bf_{t-1}+f_{t-6}f_t-f_{t-5}f_{t-1}=(a+f_{t-6})+(b-f_{t-5}).
	\]
	As in the proof of Theorem~\ref{T-Gap} we conclude $t(n+2)=t$ is even, so $d_{k+1}\le n+2$.  In this range we also have $\del_{n+5}<\rec{\sqrt{5}}\phi^{-1}$, so $d_{k+2}\le n+5$, a contradiction.  We conclude the result for $D_2$.
	
	We now turn to $u_{k+2}-u_k$.  Let $n=u_k$, and again we can verify this theorem for $n\le 273$ and hence can assume that $t(m)\ge 8$ for all $m\ge n$.  We will be done if $\del_n\notin [0,.24)\cup (.36,.62)\cup (.97,1]$ by Lemma~\ref{L-ConFloor}, so assume this is not the case.
	
	Since $(.97,1]\sub (1-\rec{\sqrt{5}}\phi^{-1},1]$, this is dealt with by Proposition~\ref{P-Thresh} as in the down-integer case.  Similarly if $\Del_n\in (.12,.24)$, then $\Del_{n+7}<\rec{\sqrt{5}}\phi^{-1}$, and hence $n+7\notin D$.  If $\Del_n\in [0,.13)$, then $\Del_{n+3}<\rec{\sqrt{5}}\phi^{-1}$ and hence either $u_{k+2}\le n+3$ or $u_{k+1}=n+3$, in which case we can not have $u_{k+2}=(n+3)+4$.  If $\Del_n\in(.47,.62)$, then $\Del_{n+4}<\rec{\sqrt{5}}\phi^{-1}$ and $n+4\in D$, but $u_{k+1}\ne n+4$ so we have $u_{k+2}\le n+4$.
	
	The last case to consider is $\Del_n\in (.36,.48)$.  Let $a=a(n),b=b(n),t=t(n)$.  By the exact same computations we did for the case $\del_n\in (.039,.15)$, we find $b-a>f_{t-4}$.  By d'Ocagne's identity and the assumption that $t$ is even we find
	\[
	n+5=af_t+bf_{t-1}+f_{t-6}f_t-f_{t-5}f_{t-1}=(a+f_{t-6})+(b-f_{t-5}).
	\]
	and conclude that $t(n+5)=t$ is even.  In this range we have $\Del_{n+2}<\rec{\sqrt{5}}\phi^{-1}$, so $u_{k+2}\le n+5$, finishing the proof.
\end{proof}

\section{Another View: Starting from the End}\label{S-End} Here we describe another approach for determining $n$-good pairs. This is by means of what we call
a {\it reverse} Fibonacci walk $R = (r_1, r_2,  r_3, r_4, \ldots )$. To perform this walk, we start with values $r_1 = n$ and  $r_2 = b $, where $1 \leq b < n$,
and define $r_{k+2} = r_k - r_{k+1}$. The walk continues as long as $r_k > 0$. Let $t = t(n,b)$ be the largest index for which $r_t > 0$, and we note that in general this value $t$ does not equal $t(n)$ as defined in Theorem~\ref{T-Char}. Thus,
$R = R(n,b) = (n, b, n-b, 2b-n, 2n - 3b, 5b - 3n, 5n - 8b, \ldots)$. In general,
\begin{align} \label{r_k}
r_k = \begin{cases}
f_{k-2}n - f_{k-1}b~~~ \text{if $k$ is odd},\\
f_{k-1}b - f_{k-2}n ~~~\text{if $k$ is even}.
\end{cases}
\end{align}
For example,
\begin{align*}
R(100, 61) &= (100, 61, 39, 22, 17, 5, 12),\\
R(100, 62) &= (100, 62, 38, 24, 14, 10, 4, 6),\\
R(100, 63) &= (100, 63, 37, 26, 11, 15).
\end{align*}
It follows from the definition of $R$ that $w_k(5,12),\ w_k(4,6)$ and $w_k(11,15)$ are all Fibonacci walks that hit $n=100$.
Of course, any pair $w_k(r_{k-1}, r_k)$ in $R(n,b)$ has this property, but only the terminal pair $(r_{t-1}, r_t)$ has a chance of having length $s(n)$, that is, of generating an $n$-slow Fibonacci walk. Thus,
to determine $s(n)$, we only have to try a linear number of $b$'s as opposed to the quadratic number of candidates needed for the ordinary Fibonacci walk for $n$.  In fact, we can do much better than this.

\begin{prop}
	There exists an $O(\log n)$ algorithm that takes $n$ as input and returns all $n$-good pairs.
\end{prop}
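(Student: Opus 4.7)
The plan is to combine Theorem~\ref{T-Char} with the reverse Fibonacci walk.  Theorem~\ref{T-Char} pins down the final portion of every $n$-slow walk: $w_s=n$, and $w_{s+1}$ lies in the set $\{\floor{\phi n},\ceil{\phi n},\floor{\phi n}-1,\ceil{\phi n}+1\}$.  Consequently the second-to-last term $w_{s-1}=w_{s+1}-n$ takes at most four values, all computable from $n$ in $O(\log n)$ time.  Knowing $w_{s-1}$ is exactly what is needed to seed a reverse walk, since $R(n,w_{s-1})$ simply runs the walk in reverse and recovers the $n$-good pair at its terminus.

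The algorithm is as follows.  Compute $\floor{\phi n}$ and $\ceil{\phi n}$ using $O(\log n)$-bit arithmetic, and form the candidate set
\[B=\{\floor{\phi n}-n,\ \floor{\phi n}-n-1,\ \ceil{\phi n}-n,\ \ceil{\phi n}-n+1\}.\]
For each $b\in B$, execute $R(n,b)$ until its entries become nonpositive, recording the length $t_b=t(n,b)$ together with the terminal positive pair.  Output those terminal pairs (suitably reordered so that the first coordinate is the larger) that achieve $\max_{b\in B}t_b$; by Theorem~\ref{T-Char} these are exactly the $n$-good pairs.

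The runtime is dominated by the reverse walks, so the crux is to bound $t_b=O(\log n)$ for each $b\in B$.  By \eqref{r_k}, $r_k=\pm(bf_{k-1}-nf_{k-2})$.  Every $b\in B$ satisfies $|b-n\phi^{-1}|\le 2$, so writing $b=n\phi^{-1}+\epsilon$ with $|\epsilon|\le 2$ and applying Binet's formula (as in the proof of Lemma~\ref{L-Form}(b)) gives
\[r_k=\pm\bigl(\phi(-\phi)^{-k}n+\epsilon f_{k-1}\bigr)=O(n\phi^{-k}+\phi^{k}).\]
Hence $|r_k|<1$ once $k=\Omega(\log n)$, so the reverse walk terminates within $O(\log n)$ iterations, each consisting of a single subtraction.

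The only nontrivial step is the decay estimate on $r_k$ just displayed, which is a short Binet-style calculation analogous to Lemma~\ref{L-Form}(b); everything else (membership of $w_{s-1}$ in $B$, and the fact that reading off the terminal of the longest reverse walk yields every $n$-good pair) is a direct consequence of Theorem~\ref{T-Char} and the definition of $R(n,b)$.
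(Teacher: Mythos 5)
Your algorithm is essentially the paper's: the paper runs the two reverse walks $R(\floor{\phi n},n)$ and $R(\ceil{\phi n},n)$ and reads off $(a(n),b(n))$ from the longer one, deducing the possible second good pair algebraically from Theorem~\ref{T-Char}; your version drops the first term and instead seeds $R(n,\cdot)$ with all four candidates for $w_{s-1}$, recovering both good pairs directly. That variation is fine, and your reduction to Theorem~\ref{T-Char} (every $n$-good pair has $w_{s-1}\in B$; any reverse walk of length $m$ yields a forward walk of length $m$ hitting $n$, so the maximum is attained exactly at the good pairs) is sound.

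The one step that does not work as written is the termination bound. From $r_k=\pm\bigl(\phi(-\phi)^{-k}n+\epsilon f_{k-1}\bigr)=O(n\phi^{-k}+\phi^{k})$ you cannot conclude $|r_k|<1$ for $k=\Omega(\log n)$: the second term $\phi^k$ \emph{grows}, and indeed $|r_k|$ need not ever drop below $1$ (the walk stops because some $r_k$ becomes nonpositive, not small). Making your estimate work would require a lower bound on $|\epsilon|=|b-n\phi^{-1}|$ (available, since $\phi$ is badly approximable, but you did not supply it). The clean fix is the one already implicit in your correctness argument: the terminal pair $(r_{t-1},r_t)$ of $R(n,b)$ generates a forward Fibonacci walk of length $t=t(n,b)$ ending at $n$, so by Lemma~\ref{L-Form}(a) we get $n=r_{t-1}f_{t-1}+r_tf_{t-2}\ge f_{t}$, whence $t(n,b)=O(\log n)$ for every seed $b$. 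With that substitution the proof is complete.
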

\begin{proof}
	The algorithm proceeds as follows.  One first generates the reverse Fibonacci walks $R(\floor{\phi n},n)$ and $R(\ceil{\phi n},n)$, and it is not too difficult to see that we need to generate $O(\log n)$ terms for each of these sequences.  By Theorem~\ref{T-Char}, the pair $(r_{t-1},r_t)$ of the longer sequence will be the pair $(a(n),b(n))$, and from this information one can deduce all $n$-good pairs by Theorem~\ref{T-Char}.
\end{proof}

Let us rescale the reverse Fibonacci walk as 
\begin{align*}
R(1,\tfrac{b}{n})& = (1,\tfrac{b}{n}, 1 - \tfrac{b}{n}, \tfrac{2b}{n} - 1, 2 - \tfrac{3b}{n}, \tfrac{5b}{n} - 3, 5 - \tfrac{8b}{n}, \ldots)\\
&= (\rho_1, \rho_2, \rho_3, \rho_4, \ldots, \rho_t)
\end{align*}
where $\rho_k = \tfrac{r_k}{n}$ and $t = t(n,b)$ is the number of terms in the walk. The following result is well-known (e.g., see \cite{W}):\\
{\bf Fact}.
{\small \begin{align*}
0 = \tfrac{f_0}{f_1} < \tfrac{f_2}{f_3}< \cdots <  \tfrac{f_{2k-2}}{f_{2k-1}} <\tfrac{f_{2k}}{f_{2k+1}}<  \cdots < \tfrac{1}{\phi} < \cdots 
<\tfrac{f_{2k+1}}{f_{2k+2}}< \tfrac{f_{2k-1}}{f_{2k}} < \cdots < \tfrac{f_3}{f_4}<\tfrac{f_1}{f_2}=1.
\end{align*}}
Define
\begin{align} \label{K-def}
K_{2u+1} = {\Big(} \tfrac{f_{2u-2}}{f_{2u-1}}, \tfrac{f_{2u}}{f_{2u+1}}{\Big]},~~~~K_{2u+2} = {\text{$\Big[$}}\tfrac{f_{2u+1}}{f_{2u+2}}, \tfrac{f_{2u-1}}{f_{2u}}\Big),
\end{align}
for $u \geq 1$. Thus, $(0,1) = \cup_{u \geq 1}K_u $ is a decomposition of $(0,1)$ into disjoint half-open intervals. We can picture this as
\begin{align*}
(0,1) = K_3 K_5 K_7 \ldots K_{2k-1} K_{2k+1} \ldots \tfrac{1}{\phi} \ldots K_{2k+2} K_{2k} \ldots K_6 K_4.
\end{align*}
with the point  $\tfrac{1}{\phi}= 0.618 \ldots$ separating the odd $K$'s from the even $K$'s. Note that the lengths of the
$K$'s decrease exponentially rapidly since $K_m$ has length $\tfrac{1}{f_{m-2}f_m} \sim \tfrac{5}{\phi^{2m-2}}$.
\begin{prop} \label{K-proposition} $ t(n,b) = m \iff \tfrac{b}{n} \in K_m$.
\end{prop}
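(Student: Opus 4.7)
My plan is to unpack the definition of $t(n,b)$ directly via formula \eqref{r_k} and then invoke the \textbf{Fact} listed just before the proposition to collapse a conjunction of inequalities on $b/n$ into a single half-open interval matching $K_m$.

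The first step is the translation: by \eqref{r_k}, for $k$ odd we have $r_k > 0 \iff b/n < f_{k-2}/f_{k-1}$, and for $k$ even we have $r_k > 0 \iff b/n > f_{k-2}/f_{k-1}$; reversing the inequalities gives the condition for $r_k \le 0$. By definition, $t(n,b) = m$ is equivalent to $r_1, r_2, \ldots, r_m > 0$ together with $r_{m+1} \le 0$. Now the \textbf{Fact} says that as $k$ ranges over odd values the upper-bound ratios $f_{k-2}/f_{k-1}$ are strictly decreasing, and as $k$ ranges over even values the lower-bound ratios $f_{k-2}/f_{k-1}$ are strictly increasing. Consequently, ``$r_k > 0$ for every $k \leq m$'' collapses to just two inequalities, coming from the largest odd and largest even indices at most $m$.

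I then split into cases according to the parity of $m$. When $m = 2u+1$, the conjunction $r_1, \ldots, r_m > 0$ simplifies to $f_{2u-2}/f_{2u-1} < b/n < f_{2u-1}/f_{2u}$, and appending the condition $r_{m+1} \le 0$, which reads $b/n \le f_{2u}/f_{2u+1}$, tightens the upper bound (again by the \textbf{Fact}, since $f_{2u}/f_{2u+1} < f_{2u-1}/f_{2u}$) to exactly $f_{2u}/f_{2u+1}$. The resulting interval is $(f_{2u-2}/f_{2u-1}, f_{2u}/f_{2u+1}]$, which is $K_m$ by \eqref{K-def}. The case $m = 2u+2$ is analogous: the parity of the $r_{m+1} \le 0$ inequality flips which endpoint is closed, and the combined interval becomes $[f_{2u+1}/f_{2u+2}, f_{2u-1}/f_{2u}) = K_m$. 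Both implications $\Rightarrow$ and $\Leftarrow$ read off directly from the same equivalence chain.

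There is no substantive obstacle beyond careful bookkeeping of parities and open/closed endpoints. The only small item to verify is that $r_1, r_2, r_3 > 0$ hold automatically from $1 \le b < n$, which explains why the decomposition $(0,1) = \bigcup_{u \ge 1} K_u$ begins at $K_3$ and $K_4$ rather than at $K_1$ or $K_2$ and matches the observation that $t(n,b) \ge 3$ always.
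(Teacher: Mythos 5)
Your proof is correct and follows essentially the same route as the paper: translate each condition $r_k>0$ into an inequality on $b/n$ via \eqref{r_k} and use the interleaving of the Fibonacci ratios to reduce everything to the two defining inequalities of $K_m$. The only cosmetic difference is that you run an explicit two-way equivalence chain, whereas the paper proves the implication $b/n\in K_m \Rightarrow t(n,b)=m$ and lets the fact that the $K_m$ partition $(0,1)$ supply the converse.
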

\proof There are two cases depending on the parity of $m$. First, suppose $m = 2u+2$ and that $\f{b}{n}\in K_m$. Thus,
\begin{align*}
\frac{f_{2u+1}}{f_{2u+2}} \leq \frac{b}{n} &<\frac{f_{2u-1}}{f_{2u}}.
\end{align*}
Hence,
\begin{align*}
nf_{2u+1} - bf_{2u+2} &\leq 0 ~~~~\text{and}~~~~nf_{2u-1}-bf_{2u} > 0.
\end{align*}
This implies that $r_{2u+3} \leq 0$ and $r_{k} > 0$ for $k \leq 2u+1$. However, we claim that $r_{2u+2}=bf_{2u+1} - nf_{2u} > 0$ as well since
\begin{align*}
\frac{b}{n} \geq \frac{f_{2u+1}}{f_{2u+2}} > \frac{1}{\phi} > \frac{f_{2u}}{f_{2u+1}}.
\end{align*}
Consequently, $t(n,b) = 2u+2$, as claimed. The argument for $m$ odd is similar and is omitted. \qed

With this we can imagine a point starting at 0 hopping along with steps of size $\tfrac{1}{n}$ until it gets to the point 1.  By Proposition~\ref{K-proposition}, finding the value $b$ with $t(n,b)$ as large as possible is thus equivalent to knowing which step $b$ in our 0 to 1 walk will land in the interval $K_m$ with the largest value of $m$. Sometimes there may be two
consecutive values of $b$ which do this (but never three!). Also, it isn't necessarily the step which is closest to $\tfrac{1}{\phi}$
because of the non-symmetry of the $K$'s.

(The argument that there can't be three consecutive values of $b$ that land in the same $K_m$ with the largest value of $m$ goes as follows.
Suppose that our hopping point lands three times in such a $K_{2u+1}$. Since the length of $K_{2u+1}$ is $\tfrac{1}{f_{2u-1} f_{2u+1}}$,
the step size must be less than $\tfrac{1}{2}$ of this. In order for this $K$ to be optimal, the {\it next} step must go beyond $\tfrac{1}{\phi}$ and 
in fact beyond $K_{2u+2}$. Since the right-hand boundary point of $K_{2u+2}$ is $\tfrac{f_{2u-1}}{f_{2u}}$ this implies that
\begin{align*}
\frac{1}{2 f_{2u-1} f_{2u+1}} > \frac{f_{2u-1}}{f_{2u}}-\frac{f_{2u}}{f_{2u+1}} = \frac{1}{f_{2u} f_{2u+1}} ,
\end{align*}
in other words
\begin{align*}
f_{2u} > 2 f_{2u-1}
\end{align*}
which is impossible. The argument for the other parity is similar.)

\section{Magic tricks.}\label{S-Mag} Returning to some of the original motivation for this study, we describe two similar tricks based on Fibonacci walks. For
the first (somewhat wimpy) trick,
a spectator is asked to choose two arbitrary numbers $a$ and $b$ between 1 and 8. They can be equal if desired. Then the spectator is
asked to form a Fibonacci walk starting with $a$ and $b$, and continuing for 5 steps. After a bit of mental calculation, the performer announces
what the number could have been if the spectator had taken a sixth step (and the audience goes wild, or perhaps goes home!).

How does it work? Suppose the fifth step ends with the number $N$. We know that $N = 5a - 8b$. Reducing this modulo 8, we have $5a \equiv N \pmod 8$,
or $a \equiv 5N \pmod 8$. Since $a \leq 8$, then we know $a = 5N \pmod 8$, or 8 if this is 0. $b$ is now $\tfrac{5a - N}{8}$ and the sixth number in the walk
would be $13b - 8a = 5b -3a -N$. (We told you it was wimpy!) The same technique works if the spectator can choose two values $a$ and $b$ between 1 and 13
and is asked to take a Fibonacci walk of six steps. The performer can then predict what the seventh step would have been.  
One can also show the correctness of this trick by using Theorem~\ref{T-Char}.

We mention a (somewhat better) magic trick which was originally given by Richard Stanley and which was generalized by Marc van Leeuwen \cite{S2}.  A spectator chooses two numbers $1\le a_1\le a_2\le N$ and then constructs the sequence defined by $a_k=a_{k-1}+a_{k-2}$ up to some value $k$ with $N<\phi^{k-1}/2+\phi$.  
For example, for $N=25$, the number of additions should be at least 9. The spectator tells the magician $a_k$, and then the magician says that the next number in the sequence would be $N(\phi a_k)$.  The correctness of this trick can be proven using similar ideas as that of Lemma~\ref{L-Form}. The advantage of the first trick is that the calculations involved can be easily performed mentally. This is not true for the second trick (at least for us!).

\section{Concluding Remarks}\label{S-Con}
There are a number of open problems left to consider, many of which concern the gap sizes of $D$ and $U$.  To this end, define \begin{align*}&D_\ell=\{d_{k+\ell}-d_k:k\ge 1\},\\  &D_\ell(m)=\{n:\exists k\ge 1\tr{ such that }n=d_k,\ d_{k+\ell}-n=m\}.\end{align*}  We similarly define $U_\ell$ and $U_\ell(m)$.

We suspect that one can determine $D_\ell$ and $U_\ell$ for any fixed $\ell$ by using the techniques used to prove Theorems~\ref{T-Gap} and \ref{T-Gap2}.
For example, computations suggest that $D_3 = U_3 = \{3, 4, 5, 6, 7, 8, 10, 11, 13\}$. Is this actually the case?
It seems to be more challenging to determine these sets for all $\ell$.

\begin{prob}
	Determine $D_\ell,U_\ell$ for all $\ell$.
\end{prob}

We have seen that $D_1=U_1$ and $D_2=U_2$.  We suspect that this continues to hold.
\begin{con}
	$D_\ell=U_\ell$ for all $\ell$.
\end{con}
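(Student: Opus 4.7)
My plan is to attack the conjecture via a symmetry property of the $\pm 1$-valued sequence $e(n) = (-1)^{t(n)}$, $n \ge 2$, which records whether $n \in D$ or $n \in U$. The conjecture $D_\ell = U_\ell$ for all $\ell$ would follow from the stronger statement that the set of finite factors of $e$ is closed under the letter-swap involution $+1 \leftrightarrow -1$: a gap $g \in D_\ell$ corresponds to a length-$(g+1)$ factor of $e$ starting and ending with $+1$ and containing exactly $\ell - 1$ further $+1$s, and negating such a factor produces a certificate that $g \in U_\ell$ (and conversely). So the central question becomes: is the factor language of $e$ closed under bitwise negation?

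My first approach would be to establish that $e$ is a Fibonacci-automatic sequence and then analyze the substitution that generates it. By Theorem~\ref{T-Char}, $t(n)$ is determined by the canonical representation $n = af_t + bf_{t-1}$ with $1 \le a \le b \le f_t$, and this datum should be extractable from the Zeckendorf expansion of $n$ by a finite automaton. If the underlying substitution commutes with the letter swap, we win. If the automatic-sequence description proves too rigid for a clean argument, my fallback is to extend the case analysis of Theorems~\ref{T-Gap} and~\ref{T-Gap2}: given $g \in D_\ell$ witnessed by some $n = d_k$, I would examine the trajectory $(\delta_n, \delta_{n+1}, \ldots, \delta_{n+g})$ and, using the equidistribution of $\{\phi m\}$ together with Lemmas~\ref{L-abDel} and~\ref{L-Add}, locate an $n'$ for which $(\Delta_{n'}, \Delta_{n'+1}, \ldots, \Delta_{n'+g})$ tracks $(\delta_n, \ldots, \delta_{n+g})$ closely enough to force the parity-reversed pattern. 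The relevant Fibonacci identities in this step would be the d'Ocagne and Cassini identities already deployed in Section~\ref{S-Gaps}.

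The main obstacle will be the double-good integers of Theorem~\ref{T-Char}: at these $n$ one has $a(n) > f_{t-1}$, the natural order-preserving maps $F\colon D \to U$, $F(n) = \floor{\phi n}$, and $G\colon U \to D$, $G(m) = \ceil{\phi m}$, fail surjectivity, and the inequalities controlling parity transitions become delicate. The bookkeeping in Theorems~\ref{T-Gap} and~\ref{T-Gap2} shows what this looks like concretely for $\ell = 1, 2$; for general $\ell$ the number of exceptional ranges of $\delta_n$-values grows with $\ell$, so the real work is to package the argument uniformly instead of enumerating cases by hand. The automatic-sequence viewpoint seems more promising than a gap-by-gap analysis, but even there one must verify that the relevant substitution has the negation symmetry rather than simply conjecture it from small cases.
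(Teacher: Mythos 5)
This statement is left as an open conjecture in the paper: only the cases $\ell=1$ and $\ell=2$ are proved (Theorems~\ref{T-Gap} and~\ref{T-Gap2}), and the general case has no proof to compare against. Your submission is a research plan rather than a proof, and it does not close the conjecture. The reduction in your first paragraph is sound as far as it goes: $g\in D_\ell$ is witnessed by a factor of the sequence $e(n)=(-1)^{t(n)}$ of length $g+1$ beginning and ending in the ``down'' symbol with exactly $\ell-1$ further occurrences in between, so closure of the factor language under negation would indeed imply $D_\ell=U_\ell$ for all $\ell$ simultaneously. But that closure is precisely the content you would need to establish, and you explicitly defer it (``one must verify that the relevant substitution has the negation symmetry rather than simply conjecture it from small cases''). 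You also do not establish that $e$ is Fibonacci-automatic; the characterization $n=af_t+bf_{t-1}$ with $1\le a\le b\le f_t$ gives overlapping ranges of $n$ for consecutive $t$, so the parity of $t(n)$ is not read off from a single interval membership, and automaticity is a nontrivial claim in its own right.

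The fallback argument has a concrete flaw beyond being unexecuted. Locating an $n'$ whose trajectory $(\Delta_{n'},\ldots,\Delta_{n'+g})$ closely tracks $(\del_n,\ldots,\del_{n+g})$ does not ``force the parity-reversed pattern,'' because $\del_m$ (equivalently $\Del_m$) does not determine whether $m\in D$ or $m\in U$: Theorem~\ref{T-DenP} shows that $d$-paradoxical integers have positive density for every $d>\rec{\sqrt{5}}\phi^{-1}$, so there is a positive-density set of $m$ whose fractional part lies on the ``wrong'' side of its classification. Only when $\del_m$ or $\Del_m$ falls below the threshold $\rec{\sqrt{5}}\phi^{-1}$ of Proposition~\ref{P-Thresh} is membership forced, and a generic tracking argument cannot guarantee that every coordinate of the trajectory lands in the forced regime. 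To make this route work you would need to transfer the full data $(a(n),b(n),t(n))$ of Theorem~\ref{T-Char}, as the paper does for $\ell=1,2$ via Cassini's and d'Ocagne's identities, and you correctly observe that the number of exceptional ranges grows with $\ell$; packaging that uniformly is exactly the open problem. As it stands, neither branch of your plan constitutes a proof.
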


Some elements of $D_\ell$ and $U_\ell$ seem to be ``sparser'' than others.  For example, the smallest element of $U_2(10)$ is $1917$, so it seems like having $u_{k+2}-u_k=10$ is fairly rare.  It would be of interest to make this observation more precise.
\begin{prob}
	Determine the densities of the sets $D_\ell(m)$ and $U_\ell(m)$ for various $\ell$ and $m$, and in particular for $\ell=1$.
\end{prob}

We suspect that $D_\ell(m)$ has positive density for all $m\in D_\ell$.  In particular, we suspect that the following is true.
\begin{conj}
	For all $m\in D_\ell$, we have $|D_\ell(m)|=\infty$.
\end{conj}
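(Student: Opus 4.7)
The plan is to translate the conjecture, via Theorem~\ref{T-Char}, into a lattice-point counting question on the parameter space of triples $(a, b, t)$, and then exploit the self-similarity of this space as $t \to \infty$. Informally, each realizable gap pattern around a down-integer corresponds to a ``subcell'' of the parameter space, and non-degenerate subcells contain infinitely many lattice points.

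Let $m \in D_\ell$ and fix a witness $n_0 \in D_\ell(m)$ with characterization $(a_0, b_0, t_0)$, $t_0$ even. For each $0 \le j \le m$, the characterization $(a_0^{(j)}, b_0^{(j)}, t_0^{(j)})$ of $n_0 + j$ is determined by $(a_0, b_0, t_0)$ via a Bezout-style shift: when $t_0^{(j)} = t_0$, writing $x_1 f_{t_0} + y_1 f_{t_0-1} = 1$ (with $x_1, y_1$ of order $f_{t_0-2}, f_{t_0-1}$, obtainable from Cassini's identity), one has $(a_0^{(j)} - a_0, b_0^{(j)} - b_0) = (jx_1, jy_1) + k_j(f_{t_0-1}, -f_{t_0})$, where $k_j$ is the unique integer placing the pair in $\{1 \le a \le b \le f_{t_0}\}$; for some $j$ no valid $k_j$ exists, in which case $t_0^{(j)} = t_0 \pm 1$ and a similar analysis applies. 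In all cases, the map $(a, b) \mapsto (t(n+j) \bmod 2)_{j=0}^m$ is piecewise constant on $\{1 \le a \le b \le f_t\}$ for each fixed $t$, partitioning this region into finitely many polygonal subcells defined by linear inequalities in $(a,b)$; after rescaling by $1/f_t$, these partitions converge as $t \to \infty$ (since $f_{t-1}/f_t \to 1/\phi$) to a limiting finite polygonal partition of $[0,1]^2$.

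For any even $t \ge t_0$, consider the lattice points $(a, b) \in \Z^2$ with $1 \le a \le b \le f_t$ lying in the subcell corresponding to $n_0$'s pattern. Each such $(a, b)$ produces a distinct $n = af_t + bf_{t-1} \in D_\ell(m)$. If the limiting subcell has positive area, this count grows like $\Omega(f_t^2)$; if it has positive length but zero area, the count still grows like $\Omega(f_t)$, owing to the rational slopes of the bounding lines at each level $t$. Either way, taking $t$ arbitrarily large gives infinitely many $n \in D_\ell(m)$.

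The main obstacle is ruling out the degenerate case in which the witness's subcell is zero-dimensional --- a single lattice point corresponding to an ``accidental'' realization of the pattern that cannot be perturbed to produce further realizations. One expects this never to happen, since the defining linear inequalities have Fibonacci coefficients whose ratio limits are irrational, so no accidental equality can hold identically within a subcell for all large $t$. Verifying this structurally, together with handling configurations where $n_0$ is close to a boundary $f_{t+1}f_t$ of Lemma~\ref{L-Chicken} (where $t(\cdot)$ jumps), in the style of the proofs of Theorems~\ref{T-Gap} and~\ref{T-Gap2}, would complete the proof.
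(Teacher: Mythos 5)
First, a point of record: the paper does not prove this statement --- it appears in the concluding remarks as an open conjecture --- so there is no proof of the authors' to compare yours against, and your proposal must stand on its own. As written it is a strategy sketch with genuine gaps, the central one being the degenerate case you yourself flag. Ruling out a zero-dimensional ``accidental'' cell is not a loose end; it is essentially the content of the conjecture, since a gap value realized only on such a cell is exactly one realized finitely often. The appeal to ``the ratio limits are irrational, so no accidental equality can hold'' is heuristic: the cells you describe are cut out by inequalities whose coefficients are Fibonacci numbers at level $t$ (conditions of the form $b-a>f_{t-j}$ arising from d'Ocagne-type shifts, the constraints $1\le a\le b\le f_t$, and the thresholds of Lemma~\ref{L-Chicken} where $t(\cdot)$ jumps), and nothing in the proposal shows that the set of $(a,b)$ realizing a fixed parity pattern $(t(n+j)\bmod 2)_{j=0}^{m}$ is a finite union of polygons whose rescalings converge to a limiting partition. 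The integer reduction $k_j$ in your Bezout step has breakpoints that move with $t$, and $t(n+j)$ can differ from $t(n)$ by $\pm 1$ near the boundaries $f_{s-1}f_{s-2}$; both effects must be controlled uniformly in $t$ before ``piecewise constant on polygonal subcells'' is legitimate.

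Second, the fallback for one-dimensional cells is incorrect as stated: a segment of a line $\alpha a+\beta b=\gamma$ with $\alpha,\beta$ of order $f_t$ (such as the level sets $af_t+bf_{t-1}=\mathrm{const}$ that arise naturally in this parametrization) contains $O(1)$ lattice points, not $\Omega(f_t)$, since consecutive integer points on it differ by $(\beta,-\alpha)$ up to sign. So the dichotomy ``positive area or positive length'' does not close the argument; you would need positive area of the limiting cell, which is again the unproved non-degeneracy. A more promising route, consistent with the paper's own methods for Theorems~\ref{T-Gap} and~\ref{T-Gap2}, is to use Lemma~\ref{L-abDel} to convert the window pattern into the condition that $\del_{n_0}$ lies in an explicit finite union of subintervals of $[0,1]$ together with a parity condition on $t(n_0)$, and then invoke equidistribution of $\del_n$ together with the density computations of Section~\ref{S-Den} to get a positive density of realizations. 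That is what your ``subcell'' is implicitly encoding, but it must be carried out explicitly --- including showing the relevant interval is nonempty and open for every realizable $m$ --- before this counts as a proof rather than a plan.
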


In this paper we defined $w_k$ to follow a Fibonacci-like recurrence.  More generally, for any sequence $t_k$ satisfying $t_{k+r}=\sum_{i=0}^{r-1} \alpha_it_{k+i}$ for $k\ge 1$ and $\alpha_i\in \N$, one could define an $n$-slow $t_k$-walk to be any sequence $w_k$ of positive integers satisfying the recurrence of $t_k$ and which generates $n$ as slowly as possible.

\begin{quest}
	What can be said about $n$-slow $t_k$-walks for other sequences $t_k$?  In particular, what can be said about the tribonacci sequence which has the recurrence $t_{k+3}=t_{k+2}+t_{k+1}+t_k$?
\end{quest}

Perhaps the most important open problem is the following.
\begin{prob}
	Figure out more magic tricks using Fibonacci walks!
\end{prob}

\newpage

\end{document}